\numberwithin{itemcounter}{subsection}
\theoremstyle{plain}
\newtheorem{theorem}{Theorem}[section]
\newtheorem{lemma}[theorem]{Lemma}
\newtheorem{definition-lemma}[theorem]{Definition-Lemma}
\newtheorem{proposition}[theorem]{Proposition}
\newtheorem{corollary}[theorem]{Corollary}
\theoremstyle{definition}
\theoremstyle{remark}
\newtheorem{remark}[theorem]{Remark}
\newtheorem{example}[theorem]{Example}
\numberwithin{equation}{section}
\def\bbA{\mathbb{A}}
\def\bbG{\mathbb{G}}
\def\bbN{\mathbb{N}}
\def\frakg{\mathfrak{g}}
\def\frakL{\mathfrak{L}}
\def\frakM{\mathfrak{M}}
\def\calE{\mathcal{E}}
\def\frakL{\mathfrak{L}}
\def\frakg{\mathfrak{g}}
\def\frakgl{\mathfrak{gl}}
\def\bfv{\mathbf{v}}
\def\bfw{\mathbf{w}}
\def\Stab{\operatorname{Stab}\nolimits}
\def\k{{\operatorname{k}\nolimits}}
\def\C{\operatorname{C}\nolimits}
\def\Q{\operatorname{Q}\nolimits}
\def\Z{\operatorname{Z}\nolimits}
\def\rad{\operatorname{rad}\nolimits}
\def\Ker{{\operatorname{Ker}\nolimits}}
\def\Im{{\operatorname{Im}\nolimits}}
\def\Hom{\operatorname{Hom}\nolimits}
\def\Aut{\operatorname{Aut}\nolimits}
\def\Tr{\operatorname{Tr}\nolimits}
\def\End{\operatorname{End}\nolimits}
\def\Ind{\operatorname{Ind}\nolimits}
\def\v{v}
\def\Gr{\operatorname{Gr}\nolimits}
\def\C{\mathbb{C}}
\def\dim{\mathrm{dim}}
\def\Hom{\mathrm{Hom}}
\def\det{\mathrm{det}}
\def\End{\mathrm{End}}
\def\Ind{\mathrm{Ind}}
\def\Z{\mathbb{Z}}
\def\Q{\mathbb{Q}}
\def\qed{\hfill $\sqcap \hskip-6.5pt \sqcup$}
\def\N{\mathbb{N}}
\def\qlb{\overline{\mathbb{Q}}_\ell}
\def\ZZ{\mathbb{Z}}
\def\QQ{\mathbb{Q}}
\def\fqb{\overline{\mathbb{F}}_q}
\def\fq{\mathbb{F}_q}
\def\v{\mathbf{v}}
\def\w{\mathbf{w}}
\def\Exp{\operatorname{Exp}}
\def\kk{{\operatorname{k}\nolimits}}
\def\vol{\operatorname{vol}\nolimits}
\def\Hom{\operatorname{Hom}\nolimits}
\def\End{\operatorname{End}\nolimits}
\def\Aut{\operatorname{Aut}\nolimits}
\def\Ker{\operatorname{Ker}\nolimits}
\def\dim{{\operatorname{dim}\nolimits}}
\def\codim{{\operatorname{codim}\nolimits}}
\numberwithin{equation}{section}
\title[On the number of points of nilpotent quiver varieties over finite fields]{On the number of points of nilpotent quiver \\varieties over finite
fields}
\author{T. Bozec, O. Schiffmann and E. Vasserot}
\begin{document}

\begin{abstract}
We give a closed expression for the number of points over finite fields of the
Lusztig nilpotent variety associated to any quiver, in terms of Kac's $A$-polynomials.
When the quiver has 1-loops or oriented cycles, there are several possible variants of the Lusztig nilpotent variety, and
we provide formulas for the point count of each. This involves nilpotent versions of the Kac $A$-polynomial, which we introduce and for which we give a closed formula similar to Hua's formula for the usual Kac $A$-polynomial.
Finally we compute the number of points over a finite field of the various stratas of the Lusztig nilpotent variety
involved in the geometric realization of the crystal graph. 

\end{abstract}

\maketitle

\setcounter{tocdepth}{3}

\tableofcontents

\setcounter{section}{-1}

\section{Introduction}

\smallskip

The interplay between the geometry of moduli spaces of representations of quivers and the representation theory
of quantum groups has led to numerous constructions and results of fundamental importance for both areas. 
One of the most fundamental object in the theory is the \textit{Lusztig nilpotent variety} introduced in \cite{LusJAMS}, which is a closed substack $\underline{\Lambda}_\mathbf{d}$ of the cotangent stack $T^*\underline{{Rep}}_{\mathbf{d}}(Q)$ of the stack of representations of dimension $\mathbf{d}$ of a quiver $Q$. When $Q$ has no $1$-cycle the stack
$\underline{\Lambda}_{\mathbf{d}}$ is Lagrangian and as shown by Lusztig (resp.~Kashiwara-Saito), its irreducible components are
in one to one bijection with the weight $\mathbf{d}$ piece of the canonical basis (resp.~crystal graph) of $U^+_q(\mathfrak{g}_{Q})$, where $\mathfrak{g}_Q$ is the Kac-Moody Lie algebra associated to $Q$. The stack $\underline{\Lambda}_{\mathbf{d}}$ is singular, and although it can be inductively built by sequences of (stratified) affine fibrations, see \cite{KS}, its geometry remains mysterious. The link
mentioned above with canonical or crystal bases shows that, for quivers without $1$-cycles, the generating series for \textit{top} Borel-Moore
homology groups of $\underline{\Lambda}_{\mathbf{d}}$ is given by
\begin{equation}\label{E:intro1}
\sum_{\mathbf{d}} \dim(H_{top}(\underline{\Lambda}_{\mathbf{d}}, \mathbb{Q}))\,z^\mathbf{d}=
\sum_{\mathbf{d}} \dim(U^+(\mathfrak{g}_Q)[\mathbf{d}])\, z^\mathbf{d} =
\prod_{\alpha \in \Delta^+} (1-z^{\alpha})^{-\dim\; \mathfrak{g}_Q[\alpha]}.
\end{equation}

A natural problem is to extend the formula (\ref{E:intro1}) to the whole cohomology of $\underline{\Lambda}_{\mathbf{d}}$ and to understand its significance from the point of view of representation theory. One first step in this program is worked out in this paper together with its companion \cite{SV}. The aim of the present paper is to carry out the essential step in the computation of the cohomology of 
$\underline{\Lambda}_{\mathbf{d}}$ : we determine the number of points of $\underline{\Lambda}_{\mathbf{d}}$ over finite fields of large
enough characteristic. The answer, given in the form of generating series, is expressed in terms of the Kac polynomials $A_{\mathbf{d}}$ attached to the quiver $Q$. We refer the reader to Theorem~\ref{T:main} for details. In \cite{SV} it is proven that $\underline{\Lambda}_{\mathbf{d}}$ is cohomologically pure, and hence that its Poincar\'e polynomial coincides with its counting polynomial. In addition, the (whole) cohomology of $\underline{\Lambda}_{\mathbf{d}}$ is related there to the Lie algebras introduced by Maulik and Okounkov in \cite{MO}.

\smallskip

Our strategy to compute the number of points of $\underline{\Lambda}_{\mathbf{d}}(\mathbb{F}_q)$ is the following~: we relate the number of points of $\underline{\Lambda}_{\mathbf{d}}(\mathbb{F}_q)$ to the number of points of certain \textit{Lagrangian}
Nakajima quiver varieties $\mathfrak{L}(\mathbf{d}, \mathbf{n})(\mathbb{F}_q)$. Using some purity result for these Nakajima quiver varieties together
with a Poincar\'e duality argument we express the counting polynomial of $\mathfrak{L}(\mathbf{d},\mathbf{n})$ in terms of the
Poincar\'e polynomial of the \textit{symplectic} Nakajima quiver variety $\mathfrak{M}(\mathbf{d},\mathbf{n})$. Finally, we use
Hausel's computation of the Poincar\'e polynomials of $\mathfrak{M}(\mathbf{d},\mathbf{n})$ in terms of Kac polynomials, see \cite{Hausel}.

\smallskip

We note that the generating series for the top homology groups of $\underline{\Lambda}_{\mathbf{d}}$ can be extracted from our formula, and involves only the constant terms of Kac polynomials : combining this with (\ref{E:intro1}) one recovers a proof of Kac's conjecture (first proved in \cite{Hausel}) relating the multiplicities of root spaces in Kac-Moody algebras to the constant term of Kac polynomials.

\smallskip

In the context of \cite{MO} it is essential to allow for \textit{arbitrary} quivers $Q$, such as for instance the quiver with one vertex and $g$ loops. Note that there is no Kac-Moody algebra associated to a quiver which does carry $1$-cycles. In \cite{BozecP}, \cite{Bozec} the first author introduced a quantum group $U_q(\mathfrak{g}_Q)$ attached to an arbitrary quiver $Q$ which coincides with the usual quantized Kac-Moody algebra for a quiver with no $1$-cycles and, 
he generalized to this context several fundamental constructions and results,
in particular the theory of canonical and crystal bases, and an analogue of (\ref{E:intro1}). In the presence of $1$-cycles, Lusztig's nilpotent variety is \textit{not} Lagrangian anymore and one has to consider instead a larger subvariety $\Lambda^{1}$ defined by some \textit{`semi-nilpotency'} condition. In addition, when the quiver $Q$ contains some oriented cycle we consider yet a third subvariety $\Lambda^0$ defined by some weak form of semi-nilpotency. The varieties $\Lambda^0$, $\Lambda^1$
are Lagrangian, contains $\Lambda$ and are in some sense more natural than $\Lambda$ from a geometric perspective. We carry out in parallel the computation of the number of points over finite fields for each of the $\Lambda^0, \Lambda^{1}$ and $\Lambda$. This leads us to introduce two variants $A^0$ and $A^{1}$ of the Kac polynomials, respectively counting nilpotent and $1$-cycle nilpotent indecomposable representations, see Section~1.4 for more details, for which we prove the existence and give an explicit formula similar to Hua's formula. As an application, we
provide a proof of an extension of Kac's conjecture on the multiplicities of Kac-Moody Lie algebras to the setting of arbitrary quivers.

\smallskip

To finish, let us briefly describe the contents of this paper~: the main actors are introduced and our main theorem is stated in Section~1, where several examples are explicitly worked out. Section~2 deals with the existence
of nilpotent Kac polynomials $A^0, A^{1}$, and provides explicit formulas for these in the spirit of Hua's formula for the usual Kac polynomial. In Section~3 we study several subvarieties $\mathfrak{L}^0(\v,\w),$ $ \mathfrak{L}(\v,\w),$ $ \mathfrak{L}^{1}(\v,\w)$ 
of the symplectic Nakajima quiver variety $\mathfrak{M}(\v,\w)$, respectively corresponding to the Lusztig nilpotent varieties 
$\Lambda^0,$ $ \Lambda^{1}$ and $\Lambda$. More precisely, we establish some purity results and compute the counting polynomials of these subvarieties by combining a Poincar\'e duality argument (based on Byalinicki-Birula decompositions) with Hausel's computation of the Betti numbers of $\mathfrak{M}(\v,\w)$. In Section~4 we relate the counting polynomials of 
$\Lambda_{\v}$, $\Lambda^0_\v,$ $ \Lambda^{1}_{\v}$ to the the counting polynomials of the 
$\mathfrak{L}(\v,\w)$, $\mathfrak{L}^0(\v,\w),$ $\mathfrak{L}^{1}(\v,\w)$
and prove our main theorem. Section 5 contains an observation about the counting polynomials of certain natural strata in Lusztig
nilpotent varieties arising in the geometric realization of crystal graphs. Finally, in the appendix we recall the definition of the quantum
group associated in \cite{Bozec} to an arbitrary quiver, give a character formula for it, and use our main Theorem to prove an extension of Kac's conjecture in that context.

\vspace{.2in}

\section{Statement of the result}

\medskip

\subsection{Lusztig nilpotent quiver varieties}\hfill\\

Let $Q=(I,\Omega)$ be a finite\footnote{locally finite quiver would do as well} quiver, with vertex set $I$
and edge set $H$. For $h \in \Omega$ we 
will denote by $h', h''$ the initial and terminal vertex of $h$. Note that we
allow 1-loops, i.e., edges $h$ satisfying $h'=h''$. Set $\v \cdot \v'=\sum_i v_iv'_i$. We denote by
$$\langle \v, \v'\rangle=\v \cdot \v'- \sum_{h\in \Omega} v_{h'}v'_{h''}$$
the Euler form on $\Z^I$, and by $(\bullet,\bullet)$ its symmetrized version  such that
$(\v,\v')=\langle \v, \v' \rangle + \langle \v',\v\rangle$. 
We will call \textit{imaginary} (resp. \textit{real}) a vertex which carries a 1-loop (resp. which doesn't carry an 1-loop) 
and write $I= I^{im} \sqcup I^{re}$ for the associated partition of $I$.

\smallskip

Let $Q^* = (I, \Omega^*)$ be the opposite quiver, in which the direction of every arrow is inverted.
Let $\bar{Q}=(I, \bar \Omega)$ with $\bar \Omega=\Omega \sqcup \Omega^*$ be the doubled quiver, obtained from $Q$ by
replacing each arrow $h$ by a pair of arrows $(h, h^*)$ going in opposite directions.

\smallskip

Fix a field $\k$. For each dimension vector $\v \in \N^I$ we fix an
$I$-graded $\k$-vector space $V=\bigoplus_i V_i$ of graded dimension $\v$ and we set
$$E_{\v}=\bigoplus_{h \in \Omega} \Hom(V_{h'}, V_{h''}), \quad E^*_{\v}=\bigoplus_{h \in \Omega^*} \Hom(V_{h'}, V_{h''}),\quad
\bar{E}_{\v}=\bigoplus_{h \in \bar\Omega} \Hom(V_{h'},
V_{h''}).$$
Elements of $E_{\v}$, $E_{\v}^*$ and $\bar{E}_{\v}$ will be denoted by
$x=(x_h)$, $x^*=(x_{h^*})$ and $\bar x=(x,x^*)$. 

\smallskip

By a flag of $I$-graded vector spaces in $V$ we mean a finite increasing flag of $I$-graded subspaces 
$\big(\{0\}=L^0 \subsetneq L^1 \subsetneq \cdots \subsetneq L^{s}=V\big).$ 
We'll say that $(L^l)$ is a \emph{restricted} flag of $I$-graded vector spaces if 
for all $l$ the vector space $L^l/L^{l-1}$ is concentrated on one vertex.
Since we want to consider arbitrary quivers, we need several notions of nilpotency for quiver representations. We say that :

\begin{itemize}
\item
$x$  is \emph{nilpotent} if there exists a flag of $I$-graded vector spaces $(L^l)$ in $V$ such that
$$x_h (L^l) \subseteq L^{l-1}, \quad l=1, \ldots, s, \quad h \in \Omega,$$

\item
$x$ is  \emph{1-nilpotent} if for any vertex $i\in I^{im}$ there exists a flag of subspaces
$(L_i^l)$ in $V_i$ such that 
$$x_h (L_i^{l}) \subseteq L_i^{l-1}\quad l=1, \ldots, s, \quad h \in \Omega\ \text{with}\ h'=h''=i,$$

\item $\bar x$ is \emph{nilpotent}
 if there exists a flag of $I$-graded vector spaces $(L^l)$ in $V$ such that
$$x_h (L^l) \subseteq L^{l-1}, \quad x_{h^*}(L^l) \subseteq L^{l-1}, \quad l=1, \ldots, s, \quad h \in \Omega.$$

\item $\bar x$ is \emph{semi-nilpotent}\footnote{Our definition of semi-nilpotent representation is not the same as that appearing in \cite{Bozec}; what was called semi-nilpotent in \cite{Bozec} is what we call strongly semi-nilpotent in this paper.}
 if there exists a flag of $I$-graded vector spaces $(L^l)$ in $V$ such that
 $$x_h (L^l) \subseteq L^{l-1}, \quad x_{h^*}(L^l) \subseteq L^l, \quad l=1,
\ldots, s, \quad h \in \Omega,$$

\item $\bar x$  is \emph{strongly semi-nilpotent} if there is a restricted flag of $I$-graded vector spaces $(L^l)$ in $V$ with
$$x_h (L^l) \subseteq L^{l-1}, \quad x_{h^*}(L^l) \subseteq L^{l}, \quad l=1, \ldots, s, \quad h \in \Omega,$$

\item switching the roles of $\Omega$ and $\Omega^*$ we get the notion of \textit{$*$-semi-nilpotent} and \textit{$*$-strongly semi-nilpotent} representation.
\end{itemize}

\smallskip

The sets of nilpotent and 1-nilpotent representations in $E_\v$ form Zariski closed subvarieties $E^0_\v$, $E^{1}_\v$ of $E_{\v}$. 
Likewise, the set of semi-nilpotent, strongly semi-nilpotent, $*$-semi-nilpotent, $*$-strongly semi-nilpotent and nilpotent representations in $\bar E_\v$
form closed subvarieties $\bar{N}_{\v}$, $\bar{N}^{1}_{\v}$, $\bar{N}^{*}_{\v}$, $\bar{N}^{*, 1}_{\v}$ and $\bar{E}^0_{\v}$ of $\bar{E}_{\v}$ respectively.
We have the following inclusions
$$E^0_{\v} \subseteq E^{1}_\v \subseteq E_\v,
\quad
\bar{E}^0_\v \subseteq \bar{N}^{1}_{\v} \subseteq \bar{N}_\v \subseteq \bar{E}_\v, 
\quad \bar{E}^0_\v \subseteq \bar{N}^{*,1}_{\v} \subseteq \bar{N}^*_\v \subseteq \bar{E}_\v.$$ 

\smallskip

\begin{remark} \label{rem:3.1}
(a) For any path $\sigma$ in $Q$,  
let $x_\sigma$ be the composition of $x$ along $\sigma$. 
The representation $x$ of $Q$ is nilpotent if and only if there is an integer $N$ such that $x_\sigma=0$
for each path $\sigma$ of length $\geqslant N$.
Similarly, the representation $\bar x$ of $\bar Q$ is nilpotent if and only if $\bar x_\sigma=0$ for each path $\sigma$ of length 
$\geqslant N$ for some $N$.
Further, $\bar x$ is semi-nilpotent if there exists $N$ such that $\bar x_{\sigma}=0$ for each path
$\sigma$ containing at least $N$ arrows in $Q$, see Proposition~\ref{P:attract}(c).

\smallskip

(b)  Assume that $Q$ has no oriented cycle which does not involve 1-loops, i.e., any oriented cycle in $Q$ is a product of 1-loops.
Then a representation $x$ of $Q$ is nilpotent if and only if it is 1-nilpotent, i.e., we have
$E^0_\v=E^{1}_{\v}$. In addition, the definitions of semi-nilpotent and strongly semi-nilpotent representations coincide, i.e., we have
$\bar{N}^{1}_{\v}=\bar{N}_{\v}$ and likewise for $Q^*$.

\smallskip

(c) Assume now that $Q$ contains no $1$-loop.  Then all representations are $1$-nilpotent, i.e., we have $E^{1}_{\v}=E_{\v}$, and 
any strongly semi-nilpotent representation is automatically nilpotent, i.e., we have $\bar{E}^0_{\v}= \bar{N}^{1}_{\v}=\bar{N}^{*,1}_{\v}$.

\smallskip

(d) Let us finally assume that $Q$ has no oriented cycle at all. In that case, we have $E^0_\v=E^{1}_{\v}=E_{\v}$ and 
all the various semi or $*$-semi-nilpotency conditions
 for elements in $\bar{E}_{\v}$ are equivalent to the nilpotency
condition, i.e., we have $\bar{E}^0_{\v}=\bar{N}_{\v}=\bar{N}^{1}_{\v}=\bar{N}^*_{\v}=\bar{N}^{*,1}_{\v}$.

\smallskip

In the presence of $1$-loops or oriented cycles however the above definitions of
semi-nilpotency and nilpotency do differ and we only have $\bar{E}^0_{\v} \subset \bar{N}_{\v}$ and 
$\bar{E}^0_{\v} \subseteq \bar{N}^*_{\v}$. This is already the case for the Jordan quiver.
\end{remark}

\smallskip

The group $G_{\v}=\prod_i GL(V_i)$ acts on $E_{\v}$,
$\bar{E}_{\v}$ by conjugation. This action preserves the subsets
$\bar{N}_{\v},$ $ \bar{N}^*_{\v}$ and $\bar{E}^0_{\v}$. 

\smallskip

The trace map 
$\Tr: \bar{E}_{\v} \to \k$ such that $\bar x \mapsto \sum_{h\in \Omega}\Tr(x_hx_{h^*})$
identifies $\bar{E}_{\v}$ with $T^*E_{\v}$. We set
$\mathfrak{g}_{\v}=\text{Lie}(G_{\v})=\bigoplus_i \mathfrak{gl}(V_i)$ and identify
$\mathfrak{g}_{\v}$ with its dual $\mathfrak{g}_{\v}^*$ via the trace. 
The moment map for the action of $G_{\v}$ on $\bar{E}_{\v}=T^*E_{\v}$ is the map
$\mu_{\v}: \bar{E}_{\v} \to \mathfrak{g}_{\v}$
such that $\bar x\mapsto \sum_{h \in \Omega} [x_h, x_{h^*}].$

\medskip

Following Lusztig \cite{LusJAMS}, we consider the variety
$$\Lambda_{\v}= \mu_{\v}^{-1}(0) \cap \bar{E}^0_{\v},$$
which is often called the \emph{Lusztig nilpotent variety}. It is a closed subvariety of
$\bar{E}_{\v}$ which, in general, possesses many irreducible components
and is singular. When $Q$ has no $1$-loop $\Lambda_\v$ is Lagrangian, but in general
it is not of pure dimension. Following \cite{Bozec}, we also set
$$\Lambda^{1}_{\v}= \mu_{\v}^{-1}(0) \cap \bar{N}^{1}_{\v},\quad\Lambda^{*,1}_{\v}=\mu_{\v}^{-1}(0) \cap \bar{N}^{*,1}_{\v},$$
which are both Lagrangian subvarieties in $\bar{E}_{\v}$. The above varieties play an important
role in the geometric approach to quantum groups and crystal graphs based on quiver varieties.
We'll call them the \emph{strongly semi-nilpotent varieties}.
See \cite{KS} or
\cite[lect.~4]{SLectures2} and \cite{Bozec} for the case of quiver with 1-loops.
Finally, we set
$$\Lambda^0_{\v}= \mu_{\v}^{-1}(0) \cap \bar{N}_{\v},\quad\Lambda^{*,0}_{\v}=\mu_{\v}^{-1}(0) \cap \bar{N}^{*}_{\v}.$$
These are again Lagrangian subvarieties in $\bar{E}_{\v}$.
These varieties have received less attention than the previous ones because they are not directly linked to quantum groups, but
they are very natural from a geometric point of view.
We'll call them the \emph{semi-nilpotent varieties}.
 We have
$$\Lambda_{\v} \subseteq \Lambda^{1}_{\v} \subseteq \Lambda^0_{\v}, \quad \Lambda_{\v} \subseteq \Lambda^{*,1}_{\v} \subseteq \Lambda^{*,0}_{\v}.$$
When the quiver $Q$ has no $1$-loop, we have $\Lambda_{\v} =\Lambda^{1}_{\v}=\Lambda^{*,1}_{\v}$.
When the
quiver has no oriented cycle besides the $1$-loops we have $\Lambda^{1}_\v=\Lambda^0_{\v}$ and $\Lambda^{*,1}_{\v}=\Lambda^{*,0}_{\v}$.

\medskip

The map $\mu$ and hence the varieties $\Lambda_{\v},$ $\Lambda^{\flat}_{\v},$ $\Lambda^{*,\flat}_\v$ with $\flat=0,1$ are
defined over an arbitrary field $\k$. The aim of this paper is to establish a
formula for the number of points of all these varieties over finite fields, in terms of Kac's
$A$-polynomials \cite{Kac}.
As usual, let $q$ be a power of a prime number $p$.
We will give formulas for the generating series of
$|\Lambda_{\v}(\mathbb{F}_q)|$, $|\Lambda^{1}_{\v}(\mathbb{F}_q)|$ and $|\Lambda^0_{\v}(\mathbb{F}_q)|$. 

\medskip

\subsection{The plethystic exponential}\hfill\\ 

Before we can state our result, we need to fix a few notations. Consider the spaces of power series
$$\mathbf{L}=\Q[[z_i\,;\,i \in I]],\quad
\mathbf{L}_t=\Q(t)[[z_i\,;\,i \in I]].$$
Here $t$ and $z_i$ are formal variables.
For $\v \in \N^I$ we write $z^{\v}=\prod_i z_i^{v_i}$. 
Let 
$$\Exp:\mathbf{L}_t \to \mathbf{L}_t$$ be the plethystic exponential map, which is given by 
$$\Exp(f)=\exp \Big(\sum_{l\geqslant 1}\psi_l(f)/l\Big),$$ 
where $\psi_l: \mathbf{L}_t \to \mathbf{L}_t$ is the $l$th Adams
operator defined by $$\psi_l(z^{\v})=z^{l\v},\quad\psi_l(t^k)=t^{kl}.$$

\medskip

\subsection{The Kac polynomial}\hfill\\

For $\v$ a dimension vector of $Q$ let $A_{\v}(t)$ be the Kac polynomial attached to $Q$ and $\v$. For any finite field
$\mathbb{F}_q$, the integer
$A_{\v}(q)$ is equal to the number $A_\v(\fq)$ of isomorphism classes of absolutely
indecomposable representation of $Q$
of dimension $\v$ over $\mathbb{F}_q$. The existence of $A_{\v}(t)$ is due to
Kac and Stanley, see \cite{Kac}, as is the fact that
$A_{\v}(t) \in \Z[t]$ is unitary of degree $1-\langle \v ,\v \rangle$.
That $A_\v(t)$ has positive coefficients was only recently proved in
\cite{HLV}. By Kac's theorem, we have $A_{\v}(t)=0$ unless $\v$ belongs to the set
$\Delta^+$ of positive roots of $Q$.
Let us consider the formal series in $\mathbf{L}_t$ given by
\begin{equation}\label{E:main}
P_Q(t, z)
= \Exp\Big( \frac{1}{1-t^{-1}}\sum_{\v}{A}_{\v}(t^{-1}) \,z^{\v}\Big).
\end{equation}
Write ${A}_{\v}(t)=\sum_n {a}_{\v,n}\,t^n$. The definition of $P_Q(t)$ may be
rewritten as follows~:
$$P_Q(t,z)=\prod_{\v \in \Delta^+}\;\; 
\prod_{l \geqslant 0}\;\; 
\prod_{n=0}^{1-\langle \v,\v \rangle}(1-t^{-n-l}z^{\v})^{-a_{\v,n}}.$$
Observe that the Fourier modes of $P_Q(t,z)$ are all rational functions in $t$
regular outside of $t=1$.
This allows us to evaluate $P_Q(t,z)$ at any $t\neq 1$.

\medskip

\subsection{The nilpotent Kac polynomials}\label{sec:1.4}\hfill\\ 

To express the point count of $\Lambda^\flat_{\v}$ with $\flat=0,1$ we need some variants of the Kac polynomial. 
For any finite field $\fq$ we let $A^{0}_{\v}(\fq)$ and $A^{1}_{\v}(\fq)$ be the number of absolutely 
indecomposable nilpotent and 1-nilpotent representations of $Q$ of dimension $\v$. By construction, we have
$$A_{\v}(\fq) \geqslant A_{\v}^{1}(\fq) \geqslant A_{\v}^0(\fq).$$
We establish in \S\S 2, 4 the following

\smallskip

\begin{proposition}\label{P:prop1.1} For any quiver $Q$ and any dimension $\v\in \mathbb{N}^I$ there are unique polynomials
$A_{\v}^{\flat}(t)$ in $\Z[t]$ such that for any finite field $\fq$ we have 
$A_{\v}^{\flat}(q)=A_\v^{\flat}(\fq)$. 
Moreover, we have
$A_{\v}^\flat(1)=A_{\v}(1).$
\end{proposition}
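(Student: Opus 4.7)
The plan is to follow Kac's original strategy for $A_\v(t)$ — count all representations in a given class, pass to indecomposables by plethystic logarithm, descend to absolutely indecomposables — carried out with the nilpotency condition imposed throughout, and then to handle $A^\flat_\v(1) = A_\v(1)$ by a separate comparison argument.

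For the polynomiality I would first compute $|E^\flat_\v(\fq)|$ in closed form. For $\flat = 1$ this factorises: $1$-nilpotency is a condition on the tuples of loops at each imaginary vertex only, so $|E^1_\v(\fq)|$ is a product of unrestricted linear-map counts $q^{v_{h'}v_{h''}}$ over non-$1$-loop arrows and, for each $i \in I^{im}$, the count of simultaneously nilpotent tuples of matrices of size $v_i$ (one per $1$-loop at $i$), which is polynomial in $q$ by a standard filtration argument. For $\flat = 0$ I would stratify $E^0_\v$ by the graded pieces of the tautological nilpotent filtration, producing successive affine bundles that yield a Hua-type closed generating series, manifestly polynomial in $q$. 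Since nilpotent and $1$-nilpotent representations are closed under direct summands and satisfy Krull--Schmidt, the standard Hall identity
\begin{equation*}
\sum_{\v} \frac{|E^\flat_\v(\fq)|}{|G_\v(\fq)|}\, z^\v \;=\; \Exp\!\left(\frac{1}{q-1}\sum_{\v} I^\flat_\v(q)\, z^\v\right)
\end{equation*}
then produces, by plethystic logarithm, the count $I^\flat_\v(q)$ of indecomposable $\fq$-isoclasses of the relevant type as a polynomial in $q$, and $A^\flat_\v(q)$ is extracted from the values $I^\flat_\v(q^d)$ by the standard Möbius inversion on base change $\fq \subseteq \mathbb{F}_{q^d}$, exactly as in Kac's argument.

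For the identity $A^\flat_\v(1) = A_\v(1)$ I would exhibit a $\mathbb{G}_m$-action on $E_\v$ --- scaling all non-$1$-loop arrows for $\flat=1$, or scaling a chosen set of arrows that breaks every non-trivial oriented cycle for $\flat=0$ --- whose fixed locus is exactly $E^\flat_\v$. A Białynicki--Birula / torus-localisation argument applied to the moduli stack of absolutely indecomposable representations then equates its Euler characteristic to that of the corresponding fixed substack, and since $A_\v(t)$ and $A^\flat_\v(t)$ are precisely the counting polynomials of these moduli, evaluating at $t=1$ gives the equality. The principal obstacle is this torus-localisation step: the moduli is a stack carrying non-trivial automorphism groups, the $\mathbb{G}_m$-action is not proper, and one must track how the fixed-point contribution interacts with the plethystic logarithm and Galois descent used above. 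A cleaner implementation may proceed indirectly, via the main theorem of the paper: the formulas for $|\Lambda^\flat_\v(\fq)|$ derived in Sections~3--4 in terms of $A^\flat_\v$, combined with the known formula for $|\Lambda_\v(\fq)|$ in terms of $A_\v$, must be mutually consistent at every value of $q$, and this forces $A^\flat_\v(1) = A_\v(1)$.
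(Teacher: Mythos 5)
Your polynomiality argument is broadly in the spirit of the paper's Section~2, which also computes stacky volumes and inverts a Hall--Hua identity to extract the nilpotent Kac polynomials (the paper routes this through a stack of pairs $(M,\theta)$ with $\theta$ nilpotent, following the Higgs-bundle computation, rather than directly through $|E^\flat_\v(\fq)|/|G_\v(\fq)|$, but the Krull--Schmidt plus plethystic-logarithm mechanism is the same). However, note two things. First, the identity as you wrote it has $I^\flat_\v(q)$ (plain indecomposables) inside a plethystic $\Exp$ --- but the Adams operations inside $\Exp$ already implement the Galois-descent bookkeeping, so the correct statement has absolutely indecomposable counts inside $\Exp$ and no subsequent M\"obius inversion; conflating the two invites double-counting. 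Second, and more importantly, this only delivers $A^\flat_\v(t)\in\Q[t]$, whereas the proposition asserts $\Z[t]$; the paper bridges this with Katz's lemma applied to the constructible set of pairs $(x,g)$ with $x$ $\flat$-nilpotent absolutely indecomposable and $g\in\Aut(x)$, a step your sketch omits entirely.

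The value-at-$1$ argument is where the proposal genuinely fails. Your main route proposes a $\mathbb{G}_m$-action on $E_\v$ ``whose fixed locus is exactly $E^\flat_\v$.'' No such action exists: if $\mathbb{G}_m$ scales (say) the $1$-loop arrows, the fixed locus consists of representations in which every $1$-loop arrow is \emph{zero}, which is a much smaller set than the $1$-nilpotent representations $E^1_\v$; a nonzero nilpotent loop is moved by scaling. The same failure occurs for $\flat=0$ if you scale a set of cycle-breaking arrows. So the localization you want does not localize onto the right subvariety, and the Euler-characteristic equality you are after has no chance of being established this way, independently of the stacky subtleties you flag. Your fallback --- deducing $A^\flat_\v(1)=A_\v(1)$ from ``mutual consistency'' of the point-count formulas for $\Lambda^\flat_\v$ and $\Lambda_\v$ --- is circular, since the proof of the main theorem already presupposes that the polynomials $A^\flat_\v$ exist, and in any case those formulas relate $A^\flat_\v$ to $\Lambda^\flat_\v$ and $A_\v$ to $\Lambda_\v$ separately, with no identity comparing $\Lambda^\flat_\v$ to $\Lambda_\v$ at $q=1$.

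The paper's actual argument uses a different, elementary idea worth internalizing. Act by the finite group $\fq^\times$ on the \emph{set} $\Ind_\v(\fq)$ of isomorphism classes of absolutely indecomposable representations (scaling $1$-loops for $\flat=1$, all arrows for $\flat=0$). Using an Engel-type lemma, one shows that if $x$ is \emph{not} $\flat$-nilpotent then any $u\in\fq^\times$ stabilizing $x$ satisfies $u^k=1$ for some $k$ bounded by a constant $N$ depending only on $\v$. Hence the orbit of any non-$\flat$-nilpotent $x$ has size a multiple of $(q-1)/\binom{N}{2}!$, so $(q-1)^{-1}(A_\v(q)-A^\flat_\v(q))$ lies in $\frac{1}{\binom{N}{2}!}\N$ for all $q$, which forces $(t-1)\mid(A_\v(t)-A^\flat_\v(t))$ once polynomiality is known. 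This uses no geometry of $E_\v$ beyond the existence of the group action on isomorphism classes, and entirely avoids the fixed-locus problem that sinks your sketch.
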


\smallskip

We define the formal series in $\mathbf{L}_t$ given by
\begin{equation}\label{E:main2}
\begin{split}
P^{\flat}_Q(t, z)&= \Exp\Big( \frac{1}{1-t^{-1}}\sum_{\v}
{A}^{\flat}_{\v}(t^{-1}) \,z^{\v}\Big).
\end{split}
\end{equation}

\begin{remark}
(a) We conjecture that $A_{\v}^{\flat}(t) \in \mathbb{N}[t]$. 

\smallskip

(b) If $Q$ has no 1-loops then $A_{\v}^{1}(t)=A_{\v}(t)$. 

\smallskip

(c) If any oriented cycle in $Q$ is a product of 1-loops then $A_{\v}^{1}(t)=A_{\v}^0(t)$. 

\end{remark}

\smallskip

\subsection{Kac polynomials and the nilpotent quiver varieties}\hfill\\ 

For an arbitrary quiver $Q$ we consider the generating functions in $\mathbf{L}$ given by
$$\lambda_Q(q,z)=\sum_{\v}
\frac{|\Lambda_{\v}(\mathbb{F}_q)|}{|G_{\v}(\mathbb{F}_q)|}\, q^{\langle \v, \v
\rangle}z^{\v},$$
$$\lambda^{\flat}_Q(q,z)=\sum_{\v}
\frac{|\Lambda^{\flat}_{\v}(\mathbb{F}_q)|}{|G_{\v}(\mathbb{F}_q)|}\, q^{\langle \v, \v
\rangle}z^{\v}.$$
Our main result reads as follows.

\begin{theorem}\label{T:main} 
If the field $\mathbb{F}_q$ is large enough then we have 
\begin{equation}\label{E:theo}
\lambda^{\flat}_Q(q,z)=P^{\flat}_{Q}(q,z), \quad \lambda_Q(q,z)=P_{Q}(q,z).
\end{equation}
\end{theorem}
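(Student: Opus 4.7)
The plan, outlined in the introduction, is to compute $\lambda^{\flat}_Q(q,z)$ by factoring through the Nakajima quiver varieties and invoking Hausel's formula for their Poincar\'e polynomials. I would organize the argument in three stages.

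\emph{Step 1: nilpotent Kac polynomials.} First I would establish Proposition~\ref{P:prop1.1} by a Hua-type calculation. A nilpotent (resp.\ 1-nilpotent) representation of $Q$ of dimension $\v$ admits a composition filtration whose successive subquotients are simples supported at a single vertex (resp.\ with iteratively nilpotent action of the 1-loops at that vertex). Enumerating such filtrations in the Hall algebra of (1-)nilpotent representations and then applying the plethystic logarithm to pass from all representations to absolutely indecomposable ones yields the polynomiality of $A^{\flat}_{\v}(q)$ in $q$, the identity $A^{\flat}_{\v}(1)=A_{\v}(1)$, and an explicit Hua-type formula for $\sum_{\v} A^{\flat}_{\v}(t)\,z^{\v}$.

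\emph{Step 2: purity and counting of $\mathfrak{L}^{\flat}(\v,\w)$.} Next I would study the Lagrangian subvarieties $\mathfrak{L}^{\flat}(\v,\w)\subset \mathfrak{M}(\v,\w)$ cut out by the relevant (semi-)nilpotency condition on $(x,x^{*})$. The $\bbG_m$-action on $\mathfrak{M}(\v,\w)$ scaling the cotangent arrows $x^{*}$ identifies $\mathfrak{L}^{\flat}(\v,\w)$ with the attracting set of a proper, pure fixed-point locus, so Bialynicki-Birula provides an affine stratification and hence purity with vanishing odd cohomology. Poincar\'e duality on $\mathfrak{M}(\v,\w)$, together with the fact that $\mathfrak{L}^{\flat}(\v,\w)$ is Lagrangian, expresses $|\mathfrak{L}^{\flat}(\v,\w)(\fq)|$ in terms of the Poincar\'e polynomial of $\mathfrak{M}(\v,\w)$, which Hausel's theorem writes in turn in terms of the Kac polynomials $A_{\v}$.

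\emph{Step 3: from framed to unframed, and conclusion.} Finally I would relate $\lambda^{\flat}_Q(q,z)$ to the counts of $\mathfrak{L}^{\flat}(\v,\w)$. Stratifying $\mathfrak{L}^{\flat}(\v,\w)(\fq)$ over the $G_{\v}$-orbits in its projection to $\Lambda^{\flat}_{\v}(\fq)$ and counting the possible framings fibrewise produces a generating-series identity relating $\sum_{\v,\w} |\mathfrak{L}^{\flat}(\v,\w)(\fq)|\,z^{\v}y^{\w}$ to the product of $\lambda^{\flat}_Q(q,z)$ with an explicit plethystic-exponential framing factor. Combining this with Steps~1 and~2 and inverting the framing factor in the $y$-variable recovers $\lambda^{\flat}_Q(q,z)=P^{\flat}_Q(q,z)$; the equality $\lambda_Q(q,z)=P_Q(q,z)$ follows by the same argument with nilpotent in place of semi-nilpotent representations. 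The principal obstacle will be the purity and Poincar\'e-duality step of Step~2 for arbitrary quivers with 1-loops or oriented cycles, where the $\bbG_m$-fixed locus is no longer itself a Nakajima variety of a simpler quiver and one must control by hand the normal weights of the Bialynicki-Birula stratification.
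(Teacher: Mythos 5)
Your Steps~1 and~2 follow the paper's strategy closely: Section~2 of the paper establishes the existence and Hua-type formulas for $A^{0}_{\v}(t)$, $A^{1}_{\v}(t)$ exactly by the Hall-algebra / plethystic computation you describe, and Section~3 identifies $\frakL^{\flat}(\v,\w)$ with the attracting set of a suitable torus action, uses the Bialynicki-Birula stratification to prove purity, and deduces $P(\frakL^{\flat}(\v,\w),t)=t^{2d(\v,\w)}P(\frakM^{\ast,\flat}(\v,\w),t^{-1})$ which is then fed into Hausel's (and its nilpotent variant's) formula. You are also right that the obstacle is controlling the normal weights of the BB strata when the cocharacter is not generic; the paper handles this by comparing the $T$- and $\gamma$-weight decompositions of the tangent spaces at $T$-fixed points (equations (3.6)--(3.9)).

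Step~3 as you describe it has a genuine gap. The number of stable framings $p\colon V\to W$ over a fixed $\bar x\in\Lambda^{\flat}_{\v}(\fq)$ is \emph{not} a function of $\v$ and $\w$ alone: stability amounts to $p$ being injective on the socle of $\bar x$, so the count depends on the socle dimension vector of $\bar x$. Consequently the generating series $\sum_{\v,\w}|\frakL^{\flat}(\v,\w)(\fq)|\,z^{\v}y^{\w}$ does \emph{not} factor as $\lambda^{\flat}_{Q}(q,z)$ times a framing factor, and there is no single series in $y$ to invert. What the paper does instead (Section~4) is stratify $\frakL^{\flat}(\v,\w)$ by $\w'=\operatorname{rank}(p)$, obtain a Grassmannian fibration $\frakL^{\flat}(\v,\w)_{\w'}\to\Gr^{\w}_{\w'}$ with fiber $\frakL^{\flat}(\v,\w')_{\w'}$, and then exploit the crucial isomorphism $\frakL^{\flat}(\v,\v)_{\v}\simeq\Lambda^{\flat}_{\v}$ obtained by conjugating $\bar x$ by the isomorphism $p$. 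This yields the recursion of Lemma~4.1, whose $q$-binomial inversion at $\w=\v$ (Corollary~4.2) produces a finite alternating sum expressing $|\Lambda^{\flat}_{\v}(\fq)|$ in terms of the $|\frakL^{\flat}(\v,\w')(\fq)|$ for $\w'\leqslant\v$. Substituting the counting formulas from Step~2 and manipulating the resulting double sum with the $q$-binomial theorem gives $\lambda^{\flat}_{Q}(q,z)=1/r^{\flat}(0,q^{-1},z)$, and the theorem follows from the (nilpotent) Hua formula. You should replace the framing-factor inversion by this Grassmannian-rank inversion to close the gap.
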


\smallskip

\begin{remark} (a) By a \textit{large enough field $\mathbb{F}_q$} we mean the following~: for each dimension vector $\v \in \N^I$ there
 exists an integer 
 $N >0$ such that the equality (\ref{E:theo}) holds in degrees $\v' \leqslant \v$ for any field $\mathbb{F}_{q}$ of 
 characteristic $p >N$. 

\smallskip

(b) We define the formal series
$$\zeta_{\v}(q,u)=\Exp \left(A_{\v}(q)\,u\right)=\exp \Big( \sum_{l \geqslant 1} A_{\v}(q^l)\,u^l/l\Big).$$ 
This may be thought
of as the 'zeta function' of the quotient
$$\calE_{\v}(\fq)=\{ x \in E_{\v}(\fq)\,;\, (x_h \otimes\fqb)\;\text{ indecomposable}\}/ G_{\v}.$$
The set $\calE_{\v}(\fq)$ is not the set of $\fq$-points of an algebraic variety.
It is only the set of $\fq$-points of a constructible subset of the moduli stack $E_{\v} /G_{\v}$ 
of representations of $Q$ of dimension $\v$.
So we can not
speak of its zeta function. Using this notation, we may
restate \eqref{E:theo} as the equality
$$\lambda_Q(q,z)=\prod_{\v \in \Delta^+}\prod_{l \geqslant 0}
\zeta_{\v}(q^{-1},q^{-l}z^{\v}).$$
A similar interpretation may be given for $\lambda^{\flat}_Q(z,q)$ in terms of the 'zeta functions' of stacks of $1$-nilpotent, resp. nilpotent, indecomposable representations.

\smallskip

(c) It is possible to give a closed expression for the power series $P_Q(t,z)$ and $P^\flat_Q(t,z)$,
as an immediate consequence of Hua's formula and their nilpotent variants established in \S \ref{sec:2}, see (\ref{E:proof4.5}), (\ref{E:genhua}) and (\ref{E:genhua2}).

\smallskip

(d) The formulas for the point count of $\Lambda^\flat(\v)$ in Theorem~\ref{T:main} also hold in the Grothendieck group
of varieties (or more precisely stacks, see \cite{Bridgeland}) over an algebraically closed field. One simply replaces every occurence of
$q$ by the Lefschetz motive $\mathbb{L}$. This is a consequence of the fact that the point count of the Nakajima quiver varieties
performed in \cite{Hausel} admit a similar motivic lift, see \cite{Wyss}.

 \end{remark}

\medskip

\subsection{Examples}\hfill\\

Let us provide a few simple examples of applications of Theorem~\ref{T:main}.

\smallskip

\subsubsection{Finite Dynkin quivers}
Assume that $Q$ is a finite Dynkin quiver. Then, we have  $A_{\v}(t)=A^{\flat}_{\v}(t)=1$ for all $\v\in
\Delta^+$, and thus
$$\lambda_Q(q,z)=\lambda^\flat_Q(q,z)=\prod_{\v \in \Delta^+} \prod_{l \geqslant 0}
(1-q^{-l}z^{\v})^{-1}=\Exp\Big(\frac{q}{q-1} \sum_{\v \in \Delta^+} z^{\v}
\Big).$$

\smallskip

\subsubsection{Affine quivers} Assume that $Q$ is an affine quiver. Then $\Delta^+=\Delta_{im}^+
\sqcup \Delta^+_{re}$ with
$$\Delta^+_{im}=\N_{\geqslant 1}\delta, \quad
\Delta^+_{re}=\{\Delta^+_0 + \N\delta\} \sqcup \{\Delta_0^- +
\N_{\geqslant 1} \delta\}$$
where $\delta$ is the minimal positive imaginary root, and $\Delta_0$
is the root system of an underlying finite type subquiver $Q_0 \subset Q$. We
have 
$$A_{\v}(t)=A_{\v}^{\flat}(t)=1\ \text{for}\ \v \in \Delta^+_{re}$$
while setting $r=\text{rank}(Q_0)=|I|-1$ we have
$$A_{\v}(t)=A_{\v}^{\flat}(t)=t+r\ \text{for}\ \v \in \Delta^+_{im}$$
when $Q$ is not a cyclic quiver and
$$A_{\v}(t)=A_{\v}^{1}(t)=t+r, \quad A_{\v}^{0}(t)=r+1\ \text{for}\ \v \in \Delta^+_{im}$$
if $Q$ is a cyclic quiver.
This follows from the explicit classification of indecomposable representations
of euclidean quivers, see \cite{CBlectures}.
This yields 
$$\lambda_Q(q,z)=\lambda^{1}_Q(q,z)= \Exp\left( \frac {\sum_{\v \in \Delta_0^+} q\,z^{\v}
  +(1+rq)\,z^{\delta} + \sum_{\v \in \Delta_0^-}
q\,z^{\v+\delta}}{(q-1)(1-z^{\delta})}\right),$$
$$\lambda^0_Q(q,z)=\Exp\left( \frac {\sum_{\v \in \Delta_0^+} q\,z^{\v}
  +(1+r)q\,z^{\delta} + \sum_{\v \in \Delta_0^-}
q\,z^{\v+\delta}}{(q-1)(1-z^{\delta})}\right)$$
for the cyclic quiver of type $A_r^{(1)}$, and
$$\lambda^\flat_Q(q,z)= \lambda_Q(q,z)=\Exp\left( \frac {\sum_{\v \in \Delta_0^+} q\,z^{\v}
  +(1+rq)\,z^{\delta} + \sum_{\v \in \Delta_0^-}
q\,z^{\v+\delta}}{(q-1)(1-z^{\delta})}\right)$$
for all other affine quivers.

\smallskip

\subsubsection{The Jordan quiver}
Assume that $Q$ is the Jordan quiver (with one vertex and one loop). Then
$\Delta^+=\N_{\geqslant 1}$ and we have
$$A_{\v}(t)=t,\quad A_{\v}^{\flat}(t)=1\ \text{for\ all}\ \v \geqslant 1.$$ 
Further $\Lambda^\flat_{\v}$ is the variety of pairs of commuting $\v\times \v$-matrices, with
the second matrix being nilpotent while $\Lambda_{\v}$ is the variety of pairs
of commuting nilpotent matrices.
Theorem~\ref{T:main} gives
\begin{align*}\lambda^\flat_Q(q,z)&=\Exp
\left( \frac{qz}{(q-1)(1-z)}\right)=\prod_{\v\geqslant 1} \prod_{l \geqslant 0} (1-q^{-l}z^\v)^{-1},\\
\lambda_Q(q,z)&=\Exp
\left( \frac{z}{(q-1)(1-z)}\right)=\prod_{\v\geqslant 1} \prod_{l \geqslant 1} (1-q^{-l}z^\v)^{-1}.
\end{align*} 
The above formula should be compared to the Feit-Fine formula for the number of points in the (non-nilpotent) commuting varitieties over finite fields, see \cite{Feit}. Similar formulas also appear in \cite{R-V}.

\medskip

\subsection{Remarks}\label{sec:1.7}\hfill\\ 

We finish this section with several short remarks.

\subsubsection{Poincar\'e duality} It is not difficult to show that
\begin{equation}\label{E:mucount}
\sum_{\v} \frac{|\mu_{\v}^{-1}(0)(\mathbb{F}_q)|}{|G_{\v}(\mathbb{F}_q)|}
\,q^{\langle \v, \v \rangle}\,z^{\v}=\Exp \left(
\frac{q}{q-1} \sum_{\v} A_{\v}(q)\,z^{\v}\right).
\end{equation}
See \cite[thm.~5.1]{Mozgovoy2} for a similar result in the context of
Donaldson-Thomas theory. 
Comparing (\ref{E:mucount})
with (\ref{E:main}) we see that, as far as point counting goes, the quotient stacks
$[\Lambda_{\v}/G_{\v}]$ and $[\mu_{\v}^{-1}(0)/G_{\v}]$ are in some sense
Poincar\'e dual to each other. In fact, our proof of Theorem~\ref{T:main} uses
such a Poincar\'e duality for Nakajima varieties, which are \textit{framed, stable}
versions of $\Lambda_{\v}$ and $\mu_{\v}^{-1}(0)$, see \S 2.1. 

\medskip

\subsubsection{Kac's conjecture} Let $Q$ be a quiver without 1-loops and let
$\mathfrak{g}$ be the associated Kac-Moody 
algebra. By the theorem of Kashiwara-Saito \cite{KS}, conjectured by Lusztig in \cite{Lusconj}, the number of irreducible components of
$\Lambda_{\v}=\Lambda^{1}_{\v}$ is the dimension of the $\v$ weight space $U^+(\mathfrak{g})[\v]$ of the envelopping algebra $U^+(\mathfrak{g})$. 
By the Lang-Weil theorem, we have
$$|\Lambda_{\v}(\mathbb{F}_q)|=| \text{Irr}(\Lambda_{\v})|\, q^{\dim(\Lambda_{\v})} +
O(q^{\dim(\Lambda_{\v})-1/2})$$
from which it follows that
$$ \frac{|\Lambda_{\v}(\mathbb{F}_q)|}{|G_{\v}(\mathbb{F}_q)|} \,q^{\langle \v, \v
\rangle}=|\text{Irr}(\Lambda_{\v})|
+O(q^{-1/2})= \dim( U^+(\mathfrak{g})[\v]) + O(q^{-1/2})$$
which we may write as
\begin{align}\label{E:Kac1}
\begin{split}
\lambda_{Q}(q,z)
&=\sum_{\v} \dim( U^+(\mathfrak{g})[\v]) \,z^{\v} +O(q^{-1/2}),\\
&=\prod_{\v}
(1-z^{\v})^{-\dim(\mathfrak{g}[\v])}+O(q^{-1/2}).
\end{split}
\end{align}
On the other hand, by Theorem~\ref{T:main} we have
\begin{equation}\label{E:Kac2}
\begin{split}
\lambda_{Q}(q,z)&=\Exp\Big( \sum_{\v} a_{\v,0}\,z^{\v} \Big) +O(q^{-1/2})\\
&=\prod_{\v} (1-z^{\v})^{-a_{\v,0}}+ O(q^{-1/2}).
\end{split}
\end{equation}
Combining (\ref{E:Kac1}) and
(\ref{E:Kac2}) we obtain that $a_{\v,0}=\dim(\mathfrak{g}[\v])$, which is
the statement of Kac's conjecture. This conjecture was proved by Hausel in
\cite{Hausel}, using his
computation of the Betti numbers of Nakajima quiver varieties. Note that our
derivation of Theorem~\ref{T:main}
uses Hausel's result in a crucial manner, so that the above is not a new proof
of Kac's conjecture but rather a
reformulation of Hausel's proof in terms of Lusztig nilpotent varieties instead
of Lagrangian Nakajima quiver varieties (more in the spirit of \cite{CBVdB}).

\smallskip

We refer to the appendix for a proof of an extension of Kac's conjecture to the setting of an arbitrary quiver.
This now involves the Lie algebras introduced in \cite{Bozec}.

\medskip

\section{Nilpotent Kac polynomials}\label{sec:2}

\medskip

Given a category $C$, let $\underline C$ be the groupoid formed by the objects of $C$ with their isomorphisms.
If $\underline C$ is finite, we denote by $\vol(\underline C)$ the (orbifold volume)
$$\vol(\underline C)=\sum_{M} \frac{1}{|\text{Aut}(M)|},$$
where the sum ranges over all isomorphism classes of objects  $M$ in $\underline C$ and
$|\text{Aut}(M)|$ is the number of elements in $\text{Aut}(M)$.
Given an algebraic group $G$ acting on a variety $X$, let $X/\!\!/G$ be the categorical quotient and $[X/G]$ be the quotient stack.

\subsection{Stacks of representations and stacks of pairs}\hfill\\

Let us consider an arbitrary quiver $Q$. Denote by $g_i$ be the number of 1-loops at the vertex $i \in I$. 
The aim of this section is to prove the existence of polynomials $A_{\v}^{0}(t), A^{1}_{\v}(t)  \in \Z[t]$ counting nilpotent and
$1$-nilpotent, absolutely 
indecomposable representations of $Q$ of dimension $\v$, 
and to give an explicit formula for them in the spirit of Hua's formula, see \cite{Hua}. 

\smallskip

\subsubsection{From nilpotent endomorphisms to nilpotent Kac polynomials}
Let $C_\kk$ be a Serre subcategory of the category $Rep_\kk$ of all representations of $Q$ over $\kk$.
It is an abelian category of global dimension one. 
Let $\underline{C}_{\v,\kk}$ be the stack of all representations of $Q$ of dimension $\v$ over $\k$ which belong $C_{\kk}$. 
Let $\underline C^{nil}_{\v,\kk}$ be the stack of pairs $(M, \theta)$ with $M\in C_{\v,\kk}$ and $\theta \in \End(M)$ 
a nilpotent endomorphism.

\smallskip

Now, assume that $\kk$ is a finite field. 
For each dimension vector $\v$ we consider the volume of $\underline{C}_{\v,\kk}$ is
$$\vol(\underline C^{nil}_{\v,\,\k})=\sum_{(M,\theta)} \frac{1}{|\text{Aut}(M,\theta)|}.$$
Finally, let $A(C_{\v,\k})$ be the number of absolutely indecomposable representations in $C_{\v,\,\k}$.

\smallskip

\begin{proposition}\label{P:Hua1} The following relation holds in $\mathbf{L}$
\begin{equation}\label{E:prop2.1}
\sum_{\v} \vol(\underline C^{nil}_{\v,\,\fq})\, z^{\v} = \exp\Big(\sum_{\v} \sum_{l \geqslant 1}
\frac{A(C_{\v,\,\mathbb{F}_{q^l}})}{q^l-1}\,z^{l\v}/l\Big).
\end{equation}
\end{proposition}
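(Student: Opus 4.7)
The plan is to evaluate the groupoid volume on the left by summing over isomorphism classes of $M\in C_{\v,\fq}$, use the Krull--Schmidt decomposition together with the semiperfect structure of $\End(M)$ to express the answer as an infinite product in $\mathbf{L}$, and finally identify its logarithm with the right-hand side via Galois descent.

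First, orbit-counting (applied to the conjugation action of $\Aut(M)$ on the set of nilpotent endomorphisms) gives
$$\vol(\underline{C}^{nil}_{\v,\fq}) = \sum_{[M]} \frac{|\mathrm{Nilp}(\End M)|}{|\Aut M|}.$$
Writing $M=\bigoplus_i N_i^{\oplus m_i}$ in Krull--Schmidt form, with $N_i$ pairwise non-isomorphic indecomposables and $d_i$ the residue degree over $\fq$ of the local ring $\End(N_i)$, the finite-dimensional algebra $\End(M)$ is semiperfect with nilpotent Jacobson radical $J$ and $\End(M)/J\simeq\prod_i \Mat_{m_i}(\mathbb{F}_{q^{d_i}})$. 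Since nilpotent elements are precisely those with nilpotent image in $\End(M)/J$, combining the Fine--Herstein count $|\mathrm{Nilp}(\Mat_m(\mathbb{F}_{q^d}))|=q^{dm(m-1)}$ with the standard factorization of $|\GL_m(\mathbb{F}_{q^d})|$ and cancelling the $|J|$ contributions yields
$$\frac{|\mathrm{Nilp}(\End M)|}{|\Aut M|} = \prod_i \frac{q^{-d_i m_i}}{\prod_{j=1}^{m_i}(1-q^{-d_i j})}.$$

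Second, summing over $M$ amounts to summing independently over tuples $(m_i)$, so the generating series factorizes as a product over isomorphism classes of indecomposables $N_i\in C_{\fq}$. Applying the $q$-exponential identity $\sum_{m\geq 0}y^m/(w;w)_m=\prod_{k\geq 0}(1-yw^k)^{-1}$ with $w=q^{-d_i}$ and $y=q^{-d_i}z^{\v_i}$ at each factor gives
$$\sum_\v \vol(\underline{C}^{nil}_{\v,\fq})\,z^\v \;=\; \prod_i \prod_{k\geq 1}\frac{1}{1-q^{-d_i k}z^{\v_i}}.$$
Taking logarithms, expanding $-\log(1-x)=\sum_{l\geq 1}x^l/l$ and summing the geometric series in $k$, one gets $\sum_i\sum_{l\geq 1}z^{l\v_i}/\bigl(l(q^{d_i l}-1)\bigr)$.

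Third, this should match the logarithm $\sum_\v\sum_{l\geq 1}A(C_{\v,\mathbb{F}_{q^l}})\,z^{l\v}/\bigl(l(q^l-1)\bigr)$ of the right-hand side of \eqref{E:prop2.1}. The key input is Galois descent: an $\fq$-indecomposable $N_i$ of residue degree $d_i$ corresponds to a $\mathrm{Gal}(\overline{\mathbb{F}}_q/\fq)$-orbit of size $d_i$ of absolutely indecomposable $\overline{\mathbb{F}}_q$-representations, with the $\fq$-dimension of $N_i$ being $d_i$ times the $\overline{\mathbb{F}}_q$-dimension of each orbit member. Consequently
$$A(C_{\v,\mathbb{F}_{q^m}}) = \sum_{d\mid m} d\cdot\#\{\text{iso classes of }\fq\text{-indec's of residue degree }d,\text{ dimension }d\v\},$$
and substituting this on the right-hand side, then reindexing via $m=d_il$, recovers the displayed expression. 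The principal obstacle is precisely this Galois-descent step: one has to track with care the relation between the $\fq$-dimension vector of an indecomposable, the residue field of its endomorphism algebra, and the $\overline{\mathbb{F}}_q$-dimension of the absolutely indecomposable members of its Galois orbit, and verify that the Serre subcategory $C_\kk$ behaves compatibly under extension of scalars so that indecomposability and Galois orbits are preserved under the correspondence. The remaining ingredients---Krull--Schmidt, the semiperfect structure of $\End(M)$, the Fine--Herstein count, and the $q$-exponential identity---are standard.
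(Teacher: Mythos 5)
Your proof is correct and follows essentially the same route as the paper's: Krull--Schmidt decomposition, a count of nilpotent endomorphisms via the structure of $\End(M)$ modulo its radical, a $q$-exponential identity (your Euler-product form is equivalent to the Heine identity the paper invokes), and the same Galois-descent reindexing matching $\mathbb{F}_q$-indecomposables of residue degree $d$ and dimension $d\v$ with size-$d$ Frobenius orbits of absolutely indecomposable $\overline{\mathbb{F}}_q$-representations of dimension $\v$. The remaining differences (phrasing the endomorphism-ring computation via semiperfectness and the Fine--Herstein count of nilpotent matrices rather than the paper's explicit formulas for $\rad(\End M)$, $\Aut(M)$ and $\End^0(M)$) are cosmetic.
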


\smallskip

\begin{proof} The proof follows closely that of \cite[prop.~ 2.2]{SHiggs} and is essentially based on the Krull-Schmitt property of 
$C_{\mathbb{F}_{q}}$ together with the fact that objects in $C_{\mathbb{F}_{q}}$ have connected 
automorphism groups. We briefly sketch the argument for the comfort of the reader and refer to \textit{loc.~cit.}~for details. 

\smallskip

Let $M \in C_{\mathbb{F}_q}$ and let
$M = \bigoplus_i M_i ^{\oplus n_i}$ be a decomposition of $M$ as a direct sum of indecomposables.
Note that the $(M_i,n_i)$ are uniquely determined up to a permutation. 
By the Wedderburns' theorem, the ring $\End(M_i)/\rad(\End(M_i))$ is a field extension of $\mathbb{F}_q$. We set 
$$l_i=[\End(M_i)/\rad(\End(M_i)) : \mathbb{F}_q].$$  
Then, we have 
\begin{align}\begin{split}
\rad({\End}(M))&=\bigoplus_{i \neq j} \Hom(M_i^{\oplus n_i}, M_j^{\oplus n_j}) \oplus \bigoplus_i \rad({\End}(M_i))^{\oplus n_i},\\
\Aut(M)&=\prod_i \Aut(M_i^{\oplus n_i})\; \oplus \;\bigoplus_{i \neq j} \Hom(M_i^{\oplus n_i}, M_j^{\oplus n_j}),\\
\End^0(M)&=\prod_i \End^0(M_i^{\oplus n_i}) \;\oplus \;\bigoplus_{i \neq j} \Hom(M_i^{\oplus n_i}, M_j^{\oplus n_j}).
\end{split}\end{align}
Let $\End^0$ denote the set of nilpotent endomorphisms. We deduce
\begin{equation}\label{E:proofindec1}
\frac{|\End^0(M)|}{|\Aut(M)|}=\prod_i \frac{|\End^0(M_i^{\oplus n_i})|}{|\Aut(M_i^{\oplus n_i})|}=
\prod_i\frac{q^{l_in_i(n_i-1)}}{|GL(n_i, \mathbb{F}_{q^{l_i}})|}.
\end{equation}

Let $Ind(C_{\k})$ be the set of all isomorphism classes of indecomposable objects in $C_{\k}$, 
and let us choose representatives $M_i$ for each $i \in Ind(C_{\k})$. 
Summing (\ref{E:proofindec1}) over all objects $M$ yields an equality
\begin{equation*}
 \begin{split}
  \sum_{\v} \text{vol}(\underline C^{nil}_{\v,\,\fq})\,z^{\v} 
  =\prod_{i \in Ind} \bigg( \sum_{n \geqslant 0} \frac{q^{-l_{i}n}}{(1-q^{-l_{i}n})\cdots (1-q^{-l_{i}})} \,
z^{n\,\dim(M_i)}\bigg).
 \end{split}
\end{equation*}
Applying Heine's formula
$$\sum_{n \geqslant 0} \frac{u^n}{(1-v^n) \cdots (1-v)}=\exp \bigg( \sum_{l \geqslant 1} \frac{u^l}{l(1-v^l)}\bigg)$$
we get
\begin{equation*}
  \sum_{\v} \text{vol}(\underline C^{nil}_{\v,\,\fq})\,z^{\v}=\exp \bigg( \sum_{l \geqslant 1} \sum_{i \in Ind(C_{\fq})} 
  \frac{z^{l\dim(M_i)}}{l(q^{ll_{i}}-1)}\bigg).
\end{equation*}
To prove Proposition~\ref{P:Hua1} it only remains to show the equality
\begin{equation}\label{E:P1}
\sum_{l \geqslant 1} \sum_{i \in Ind(C_{\fq})} \frac{z^{l\dim(M_i)}}{l(q^{ll_{i}}-1)}=\sum_{l \geqslant 1} \sum_{\v} 
\frac{A(C_{\v,\,\mathbb{F}_{q^l}})}{l(q^l-1)}\,z^{l\v}.
\end{equation}
This last equality follows from a standard Galois cohomology argument, see \cite[lem.~2.6]{SHiggs}.
\end{proof}

\smallskip

\subsubsection{The volume of the stack of nilpotent endomorphisms}
Our aim now is to compute the left hand side of (\ref{E:prop2.1}), and to deduce from this both the existence of and a formula 
for $A(C_{\v,\,\k})$. 
For this we introduce a stratification of $\underline C^{nil}_{\v,\,\k}$ 
by Jordan type as follows~: to any object $(M,\theta)$ in $\underline C^{nil}_{\v,\,\k} $ 
with $\theta^s=0$ and $\theta^{s-1} \neq 0$ we associate 
the sequence of surjective maps
$$\xymatrix{ M/ \Im(\theta) \ar@{->>}[r]^-{u_1} & \Im(\theta)/\Im(\theta^2) \ar@{->>}[r]^-{u_2} &\cdots  
\ar@{->>}[r]^-{u_{s-1}}& \Im(\theta^{s-1}) \ar@{->>}[r]^-{u_s} & 0}.$$
Then, we define the Jordan type of $(M,\theta)$ as 
$$J(M,\theta)=(\v_1, \ldots, \v_s),\quad \v_i=\dim(\Ker(u_i)),\quad
\forall i=1, \ldots, s.$$
 Observe that we have $\sum_i i\,\v_i=\v$. We obtain a partition
$$\underline C^{nil}_{\v,\,\k}=\bigsqcup_{\underline{\v}} \underline C^{nil}_{\underline\v,\,\k}$$
where $\underline{\v}$ ranges over all tuples $(\v_1, \ldots, \v_s)$ such that $\sum_i i\, \v_i=\v$ and
$$\underline C^{nil}_{\underline\v,\,\k}=\{(M,\theta) \in\underline C^{nil}_{\v,\,\k}\,;\,J(M,\theta)=\underline{\v}\}.$$

\smallskip

Let $E\to F$ be a map of vector bundles on an algebraic stack.
Then the quotient stack of $E$ by $F$ is an algebraic stack.
We call such a stack a \emph{stack vector bundle}.
We define
\begin{align}\label{o}o({\underline{\v}})=-\sum_i (i-1) \langle \v_i ,\v_i \rangle - \sum_{i<j} i\, (\v_i, \v_j).\end{align}

The following result is proved just as in \cite[prop~3.1]{SHiggs}, see also \cite[\S 5]{SMoz}.

\smallskip

\begin{lemma} The morphism
$\pi_{\underline{\v}}~: \underline C^{nil}_{\underline\v,\,\k}\to \prod_i \underline C_{\v_i\,\k}$
given by
$$\pi_{\underline{\v}}(M,\theta) =  (\Ker(u_1), \Ker(u_2), \ldots, \Ker(u_s))$$
is a stack vector bundle of rank $o({\underline{\v}})$.
\qed
\end{lemma}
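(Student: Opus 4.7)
The plan is to argue by induction on $s$, exhibiting $\pi_{\underline{\v}}$ as the composition of two stack vector bundles whose ranks sum to $o(\underline{\v})$. The base case $s=1$ is immediate: $\theta=0$ forces $M=K_1$, so $\pi_{\underline{\v}}$ is the identity on $\underline{C}_{\v_1,\k}$ and $o(\v_1)=0$.

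For the inductive step, I would introduce the forgetful morphism
\begin{equation*}
F : \underline{C}^{nil}_{\underline{\v}, \k} \longrightarrow \underline{C}^{nil}_{(\v_2, \ldots, \v_s), \k} \times \underline{C}_{\v_1, \k}, \quad (M, \theta) \longmapsto \bigl((\Im \theta,\, \theta|_{\Im\theta}),\, K_1\bigr).
\end{equation*}
Since the image filtration of $\theta|_{\Im\theta}$ is the truncation $M_1 \supsetneq M_2 \supsetneq \cdots \supsetneq M_s = 0$ of the image filtration of $\theta$, the pair $(\Im\theta,\theta|_{\Im\theta})$ has Jordan type $(\v_2, \ldots, \v_s)$ with the very same kernels $K_2, \ldots, K_s$; hence $\pi_{\underline{\v}} = (\pi_{(\v_2,\ldots,\v_s)} \times \id) \circ F$, and by the induction hypothesis it suffices to prove that $F$ is a stack vector bundle of rank $o(\underline{\v}) - o(\v_2, \ldots, \v_s)$.

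I would then describe the fiber of $F$ over a point $((\tilde M, \tilde\theta), K_1)$ in two layers. Writing $\tilde N_1 = \tilde M/\Im\tilde\theta$ and $W = \sum_{j \geq 2}\v_j$ (so $\dim \tilde N_1 = W$), layer (a) specifies the object $N_1 = M/\tilde M$ together with the induced surjection $u_1 : N_1 \twoheadrightarrow \tilde N_1$: this is the datum of an extension $0 \to K_1 \to N_1 \to \tilde N_1 \to 0$ in $C_\k$, parameterized by a stack vector bundle of rank $\dim\Ext^1(\tilde N_1,K_1)-\dim\Hom(\tilde N_1,K_1) = -\langle \tilde N_1, K_1\rangle = -\langle W, \v_1\rangle$. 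Layer (b) specifies the extension of $(\tilde M, \tilde\theta)$ to $(M, \theta)$ with the prescribed $u_1$: since $\theta$ factors through $\tilde M$ with $q \circ \theta = u_1 \circ \pi$ (where $q : \tilde M \twoheadrightarrow \tilde N_1$ and $\pi : M \twoheadrightarrow N_1$), the object $\bar M := M/\Im\tilde\theta$ is an extension $0 \to \tilde N_1 \to \bar M \to N_1 \to 0$ in $C_\k$, and one checks via a long exact sequence argument (using that $C_\k$ has global dimension $1$) that this assignment $(M, \theta) \mapsto \bar M$ is an equivalence onto the stack of such extensions. This gives a second stack vector bundle of rank $-\langle N_1, \tilde N_1\rangle = -\langle \v_1 + W, W\rangle$.

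Summing yields $F$ of rank $-\langle W, \v_1\rangle - \langle \v_1 + W, W\rangle$, which a direct expansion of $o(\underline{\v}) = -\sum_i (i-1)\langle \v_i, \v_i\rangle - \sum_{i<j} i(\v_i, \v_j)$ identifies with $o(\underline{\v}) - o(\v_2, \ldots, \v_s)$, closing the induction. The main obstacle is layer (b): establishing that $(M, \theta) \mapsto \bar M$ is indeed an equivalence of the type described. The key point is that the torsor of choices for $\theta$ (over $\Hom(N_1, \Im\tilde\theta)$) exactly cancels the part of $\Ext^1(N_1, \tilde M)$ that is invisible through the quotient $\tilde N_1$, leaving precisely $\Ext^1(N_1, \tilde N_1)$ modulo $\Hom(N_1, \tilde N_1)$. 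This is exactly the bookkeeping carried out in \cite[Prop.~3.1]{SHiggs}, and the argument transfers to our quiver setting without essential change.
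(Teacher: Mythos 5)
Your inductive strategy — factoring $\pi_{\underline{\v}}$ through the forgetful map $F$, reducing to showing that $F$ is a stack vector bundle, and the rank identity $-\langle W,\v_1\rangle - \langle\v_1+W,W\rangle = o(\underline{\v}) - o(\v_2,\dots,\v_s)$ — is correct, and layer (a) is fine. The gap is the claimed equivalence in layer (b). The functor $(M,\theta)\mapsto \bar M = M/\Im\tilde\theta$ is \emph{not} an equivalence from the fiber of $F$ over a fixed $(N_1,u_1)$ onto the stack of extensions $0\to\tilde N_1\to\bar M\to N_1\to 0$. The automorphism group of a point $(M,\theta)$ of that fiber is $\Hom_{C_\k}(N_1,\Ker\tilde\theta)$ (an automorphism is $\mathrm{id}+h$ with $h$ a module map $N_1\to\tilde M$ satisfying $\tilde\theta h=0$), whereas the automorphism group of the extension $\bar M$ is $\Hom_{C_\k}(N_1,\tilde N_1)$; the modules $\Ker\tilde\theta$ and $\tilde N_1 = \tilde M/\Im\tilde\theta$ have the same dimension vector but are in general not isomorphic, and these $\Hom$-spaces can have different dimensions. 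For the $A_2$-quiver $1\to 2$, take $\tilde M = P_1\oplus S_2$, $\tilde\theta$ the composite $\tilde M\twoheadrightarrow S_2\hookrightarrow P_1\hookrightarrow\tilde M$, and $K_1 = S_1$; then $\Ker\tilde\theta = P_1$, $\tilde N_1 = S_1\oplus S_2$, $N_1 = S_1^{\oplus 2}\oplus S_2$, and $\dim\Hom(N_1,\Ker\tilde\theta)=1$ while $\dim\Hom(N_1,\tilde N_1)=3$, so the functor is not fully faithful. In this same example one can check that $(M,\theta)\mapsto\bar M$ hits a single extension class, so it is not essentially surjective either.

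The ranks (and hence the orbifold point counts) still agree, because the dimension vectors of $\Ker\tilde\theta$ and $\tilde N_1$ coincide, so the Lemma and your global computation survive — but the verification of layer (b) should be done directly rather than by passing to $\bar M$. Choose an $I$-graded splitting $M = \tilde M\oplus N_1$ and write the module structure as $\bigl(\begin{smallmatrix}\tilde x & b\\0&x_N\end{smallmatrix}\bigr)$ with $b\in Z^1(N_1,\tilde M):=\bigoplus_h\Hom_\k((N_1)_{h'},\tilde M_{h''})$ and $\theta = \bigl(\begin{smallmatrix}\tilde\theta & c\\0&0\end{smallmatrix}\bigr)$ with $c\in\Hom^I_\k(N_1,\tilde M)$. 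The conditions ($\theta$ a module map, $qc = u_1$ where $q:\tilde M\to\tilde N_1$ is the quotient) cut out an affine subspace $\mathcal{P}$ defined by the linear equation $\tilde\theta b = [x,c]$ in $Z^1(N_1,\Im\tilde\theta)$, on which the change-of-splitting group $\Hom^I_\k(N_1,\tilde M)$ acts by $h\cdot(b,c) = (b-[x,h],\,c-\tilde\theta h)$. Since $\tilde\theta:\tilde M\twoheadrightarrow\Im\tilde\theta$ is surjective, the constraint map is surjective onto $Z^1(N_1,\Im\tilde\theta)$, so $\mathcal{P}$ is nonempty, and a direct count gives $\dim\mathcal{P}-\dim\Hom^I_\k(N_1,\tilde M) = -\langle N_1,\tilde N_1\rangle$, exhibiting the fiber $[\mathcal{P}/\Hom^I_\k(N_1,\tilde M)]$ as a stack vector bundle of the expected rank and closing the induction.
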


\smallskip

We deduce as in \cite[\S 3.1]{GHS} that

\smallskip

\begin{corollary} \label{cor:nil}The volume of $\underline C^{nil}_{\v,\,\fq}$ is given by
$$\vol(\underline C^{nil}_{\v,\,\fq})=\sum_{\underline{\v}} q^{o(\underline{\v})} \prod_i \vol(\underline C_{\v_i,\,\fq}),$$
where the sum ranges over all tuples $\underline{\v}=(\v_1,\v_2,\dots,\v_s)$ with $\sum_i i\, \v_i=\v$.
\qed
\end{corollary}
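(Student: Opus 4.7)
The plan is to combine the stratification by Jordan type with the stack vector bundle structure established in the preceding Lemma. Since the only unknowns here are the volumes in the formula, and both sides are additive over finite disjoint unions, the entire argument reduces to a short fibration computation.

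First, I would note that the set of tuples $\underline\v=(\v_1,\dots,\v_s)$ with $\sum_i i\,\v_i=\v$ is finite, so the stratification
$$\underline C^{nil}_{\v,\,\fq}=\bigsqcup_{\underline{\v}} \underline C^{nil}_{\underline\v,\,\fq}$$
introduced just above the Lemma yields
$$\vol(\underline C^{nil}_{\v,\,\fq})=\sum_{\underline\v} \vol(\underline C^{nil}_{\underline\v,\,\fq})$$
by additivity of the orbifold volume on finite locally closed stratifications.

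Second, I would apply the preceding Lemma which states that $\pi_{\underline\v}:\underline C^{nil}_{\underline\v,\,\fq}\to \prod_i \underline C_{\v_i,\,\fq}$ is a stack vector bundle of rank $o(\underline\v)$. The key elementary fact to invoke here is that if $\pi:\mathcal{Y}\to\mathcal{B}$ is a stack vector bundle of rank $r$ over $\fq$, then $\vol(\mathcal{Y})=q^r\cdot \vol(\mathcal{B})$. This follows from the definition given in the text (a quotient of one vector bundle by another over $\mathcal{B}$): if $\mathcal{Y}=[\mathcal{F}/\mathcal{E}]$ with $\mathcal{E},\mathcal{F}$ vector bundles over $\mathcal{B}$ of ranks $a,b$ with $b-a=r$, then fiberwise $|\mathcal{F}(\fq)|/|\mathcal{E}(\fq)|=q^{b-a}=q^{r}$, and this multiplies through the base. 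Applying this to $\pi_{\underline\v}$ and using the multiplicativity $\vol(\prod_i \underline C_{\v_i,\,\fq})=\prod_i \vol(\underline C_{\v_i,\,\fq})$ immediately yields
$$\vol(\underline C^{nil}_{\underline\v,\,\fq})=q^{o(\underline\v)}\prod_i \vol(\underline C_{\v_i,\,\fq}).$$

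Summing over $\underline\v$ gives the desired formula, so there is no substantive obstacle: the entire content of the corollary is packaged into the preceding Lemma, and the only thing to verify is the volume formula for a stack vector bundle, which is routine. The references to \cite[\S 3.1]{GHS} confirm that this deduction is essentially formal.
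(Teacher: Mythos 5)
Your proposal is correct and follows the same approach the paper intends: stratify by Jordan type, apply the preceding Lemma giving the stack vector bundle structure of rank $o(\underline{\v})$, and use the standard fact that a stack vector bundle of rank $r$ scales orbifold volume by $q^r$, together with additivity over the stratification and multiplicativity over products. The paper compresses this to a citation of \cite[\S 3.1]{GHS}, and your unwinding of that citation is exactly the expected argument.
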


\smallskip

\subsubsection{The volumes of the stack of (1-)nilpotent representations}
Let $Rep_{\kk}^{0}$ and $Rep_{\kk}^{1}$ be the category of nilpotent and 1-nilpotent representations of $Q$ over some field $\kk$.
By Proposition~\ref{P:Hua1} and Corollary~\ref{cor:nil}, the computation of $A^{0}_{\v}(q)$ and $A^{1}_{\v}(q)$ (for all $\v$ and all $q$) reduces to the computation of the volumes $\vol({\underline{Rep}}^{0}_{\v,\,\fq})$ and $\vol({\underline{Rep}}^{1}_{\v,\fq})$ (for all $\v$ and all $q$). 

\medskip

We begin with $\vol({\underline{Rep}}^{1}_{\v,\fq})$. For integers $n,g\in\N$ we consider the elements $[\infty, n]_t$ and $H(n,g)_t$ in $\Q(t)$ given by
\begin{align*}[\infty, n]_t&=\begin{cases}
{\displaystyle\prod_{k=1}^n (1-t^k)^{-1}}&\ \text{if}\ n>0,\\
1&\ \text{if}\ n=0,\end{cases}\\
H(n,g)_t&=\begin{cases}
{\displaystyle\sum_{(e_k)}t^{(g-1)\sum_{k< l}e_ke_l+g\sum_{k}(e_k)^2} \prod_k \frac{[\infty, ge_{k}-e_{k+1}]_{t}\,[\infty, e_{k+1}]_{t}}{[\infty, ge_{k}]_{t}}}&\ \text{if}\ g>0,\\
[\infty, n_i]_{t} &\ \text{if}\ g=0,\end{cases}\end{align*}
where the sum runs over all tuples $(e_k)$ of positive integers with sum $\sum_ke_k=n$.
More generally, for $\mathbf{n} \in \N^I$ we set
\begin{align*}
[\infty, \mathbf{n}]_t&=\prod_i[\infty, n_i]_t,\\
H(\mathbf{n})_t&=\prod_{i\in I}H(n_i,g_i)_t.
\end{align*}

Recall that $g_i$ is the number of loops at an imaginary vertex $i\in I^{im}$.

\smallskip

\begin{lemma}\label{L:Hua1} For any $\v \in \N^I$ we have
$$\vol({\underline{Rep}}^{1}_{\v,\,\fq})=
q^{-\langle \v,\v\rangle}H(\v)_{q^{-1}}.$$
\end{lemma}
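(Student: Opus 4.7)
The plan is to reduce the computation to the single-vertex case and then to stratify the resulting variety by a canonical kernel filtration, in exact parallel with the proof of Corollary~\ref{cor:nil}.

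First, the $1$-nilpotency condition is purely local at each imaginary vertex: it constrains the $g_i$ loops at $i\in I^{im}$ to admit a common strictly ascending flag on $V_i$, and imposes no restriction on arrows $h\in\Omega$ with $h'\ne h''$. Counting representations of each prescribed form gives
\[ |Rep^{1}_{\v}(\fq)| \;=\; q^{\sum_{h:\,h'\ne h''}v_{h'}v_{h''}}\,\prod_{i\in I}|N^{g_i}_{v_i}(\fq)|, \]
where $N^{g}_n$ denotes the variety of $g$-tuples of $n\times n$ matrices admitting a common strictly upper-triangularizing basis (with $N^{0}_n$ a single point). Dividing by $|G_{\v}(\fq)|=\prod_i|GL(V_i)(\fq)|$ and using the identity $-\langle\v,\v\rangle=\sum_{h:h'\ne h''}v_{h'}v_{h''}+\sum_{i}(g_i-1)v_i^2$, the lemma reduces to the per-vertex identity
\[ \frac{|N^{g}_n(\fq)|}{|GL_n(\fq)|} \;=\; q^{(g-1)n^2}\,H(n,g)_{q^{-1}}. \]
The case $g=0$ is immediate from $[\infty,n]_{q^{-1}}=q^{n(n+1)/2}/\prod_{j=1}^n(q^j-1)$; the substantive content lies in the case $g\ge 1$.

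For $g\ge 1$, I would stratify $N^{g}_n$ by the canonical kernel filtration. To $(A_1,\dots,A_g)\in N^{g}_n(\fq)$ attach the increasing flag $0=V^0\subsetneq V^1\subsetneq\cdots\subsetneq V^s=V$ defined inductively by $V^k/V^{k-1}=\bigcap_i\Ker(\bar A_i)$ inside $V/V^{k-1}$; this is the $g$-loop analogue of the image filtration used for a single nilpotent endomorphism in Corollary~\ref{cor:nil}. The graded dimensions $\underline e=(e_k)$, $e_k=\dim V^k/V^{k-1}$, form a composition of $n$ and partition $N^{g}_n$ into strata. Each stratum $N^{g}_{n,\underline e}$ fibers over the partial flag variety of type $\underline e$; in a basis adapted to the flag each $A_i$ is strictly block-upper-triangular, the blocks strictly above the sub-diagonal being free, while the combined sub-diagonal block
\[ \bigl(A_i^{(k-1,k)}\bigr)_{i=1}^{g} \,:\, V^k/V^{k-1}\longrightarrow \bigl(V^{k-1}/V^{k-2}\bigr)^{\oplus g} \]
must be injective for every $k\ge 2$ — this is precisely the maximality of the flag as a kernel filtration. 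A direct count of flags of type $\underline e$ (there are $|GL_n(\fq)|/(\prod_k|GL_{e_k}(\fq)|\,q^{\sum_{j<k}e_je_k})$ of them) times the count of compatible tuples for a fixed flag, divided by $|GL_n(\fq)|$, yields
\[ \frac{|N^{g}_n(\fq)|}{|GL_n(\fq)|}=\sum_{\underline e}\frac{q^{(g-1)\sum_{j<k}e_je_k\,-\,g\sum_{k\ge 2}e_{k-1}e_k}}{|GL_{e_1}(\fq)|}\prod_{k\ge 2}\frac{|\Inj(e_k,ge_{k-1})(\fq)|}{|GL_{e_k}(\fq)|}. \]

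The final step is to identify this sum with $q^{(g-1)n^2}H(n,g)_{q^{-1}}$. Rewriting every factor via the standard identities $|GL_m(\fq)|^{-1}=q^{-m^2}[\infty,m]_{q^{-1}}$ and $|\Inj(m,N)(\fq)|=q^{Nm-m(m-1)/2}[\infty,N-m]_{q^{-1}}/[\infty,N]_{q^{-1}}$, each summand becomes a monomial in $q$ times a product of $[\infty,\bullet]_{q^{-1}}$-factors, and after reorganisation these should be precisely the factors appearing in the definition of $H(n,g)_t$ evaluated at $t=q^{-1}$. I expect this last bookkeeping — consolidating the many exponents of $q$ into exactly $(g-1)\sum_{k<l}e_ke_l+g\sum_k e_k^2$ and collapsing the $[\infty,\bullet]$-factors — to be the delicate part of the argument; it is the same calculation that underlies \cite[prop.~3.1]{SHiggs} and \cite[\S 5]{SMoz}, which is precisely why the authors invoke those works.
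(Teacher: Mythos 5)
Your plan is essentially the paper's own proof, seen through a transpose: the paper stratifies $\mathcal{N}^{(g)}_n$ by the decreasing image filtration $F_k=\sum_{h_1,\dots,h_k}\Im(x_{h_1}\cdots x_{h_k})$ (so the decisive condition is surjectivity of the combined block $(F_{k-1}/F_k)^{\oplus g}\twoheadrightarrow F_k/F_{k+1}$, counted via \eqref{E:surjmaps}), whereas you stratify by the increasing kernel filtration and record injectivity of the dual blocks. Since the number of surjections $\kk^a\to\kk^b$ equals the number of injections $\kk^b\to\kk^a$, the two stratifications produce identical strata counts and the resulting sum over compositions $\underline{e}$ is the same; the reduction to the one-vertex case and the identity $-\langle\v,\v\rangle=\sum_{h:h'\ne h''}v_{h'}v_{h''}+\sum_i(g_i-1)v_i^2$ also match the paper. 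So the approach is sound and not genuinely different, only dualized.

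One small slip worth correcting before you carry out the deferred bookkeeping: the identity you quote for the number of injections is off by a power of $q$. The correct count is
\[
|\Inj(m,N)(\fq)|=\prod_{j=0}^{m-1}\bigl(q^{N}-q^{j}\bigr)=q^{Nm}\,\frac{[\infty,N-m]_{q^{-1}}}{[\infty,N]_{q^{-1}}},
\]
without the extra $q^{-m(m-1)/2}$. With that fix, and with the usual $|GL_m(\fq)|^{-1}=q^{-m^2}[\infty,m]_{q^{-1}}$, the exponents do consolidate as you expect. You may also find it worth sanity-checking the displayed product in the definition of $H(n,g)_t$ against the smallest cases $n=1,2$, $g=1$ (the Jordan quiver), where the left-hand side of the lemma is $\tfrac{1}{q-1}$ and $\tfrac{q}{(q-1)^2(q+1)}$ respectively; this will tell you whether the factor $[\infty,e_{k+1}]_t$ in the product should be read as $[\infty,e_k]_t$, exactly the factor contributed by the choice of flag of type $\underline{e}$.
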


\smallskip

\begin{proof} Set $\k=\fq$.
Let $\mathcal{N}_l^{(g)}$ be set of $g$-tuples of nilpotent matrices in $\End(\kk^l)$, i.e., 
the set of 1-nilpotent representations of dimension $l$ of the quiver with one vertex and $g$ loops.
We have ${\underline{Rep}}^{1}_{\v,\,\k}=[X_{\v}/G_{\v}]$ where
$$X_{\v}=\prod_{\substack{h \in \Omega \\ h' \neq h''}} \Hom(\kk^{v_{h'}}, \kk^{v_{h''}}) \times \prod_{i \in I^{im}} \mathcal{N}^{(g_i)}_{v_i}.$$
We now compute the number of elements $|\mathcal{N}^{(g)}_l|$ in $\mathcal{N}^{(g)}_l$. 
To a point $x=(x_1,\dots,x_g)$ in $\mathcal{N}_l^{(g)}$ we associate
a flag $F=(F_k)$ of subspaces in $\kk^l$ as follows : 
$$F_k= \sum_{h_1, \ldots, h_k=1}^g \Im(x_{h_1} \cdots x_{h_k}).$$
We set $f_k(x)= \dim(F_k)$, and define a partition 
$\mathcal{N}^{(g)}_l=\bigsqcup_{f} \mathcal{N}^{(g)}_{f}$
where
$$\mathcal{N}^{(g)}_{f}=\{x \in \mathcal{N}^{(g)}_l\,;\, f_k(x)=f_k\,,\,\forall k\}.$$
Given a flag $F$ of dimension $f=(f_k)$, we count the number of tuples $x$ whose associated flag is $F$. 
This is a (proper) open subset of $(\mathfrak{n}_F)^g$ where $\mathfrak{n}_F=\{ y \in \End(\kk^l)\,;\, y(F_k) \subseteq F_{k+1}\,,\,\forall k\}$.  
The number of surjective linear maps from $\kk^a$ to $\kk^b$ is equal to 
\begin{equation}\label{E:surjmaps}
|GL_a(\kk)|\,/\,q^{b(a-b)}|GL_{a-b}(\kk)|=q^{ab}\,\frac{[\infty, a-b]_{q^{-1}}}{[\infty, a]_{q^{-1}}}.
\end{equation}
We deduce
that the number of tuples $x$ associated with $F$ is equal to
$$q^{g\sum_{k<h} e_ke_h}\prod_k\frac{[\infty,ge_{k}-e_{k+1}]_{q^{-1}}}{[\infty,ge_{k}]_{q^{-1}}},$$
where $e_k= f_{k-1}-f_k$ for all $k$. Summing over all flags of dimension $f=(f_k)$ gives
\begin{align*}
|\mathcal{N}^{(g)}_{f}|&=
|GL_l(\kk)|\,q^{-\sum_{k\leqslant h}e_ke_h}\,q^{g\sum_{k<h} e_ke_h}\prod_k \frac{[\infty,ge_{k}-e_{k+1}]_{q^{-1}}[\infty, e_{k+1}]_{q^{-1}}}{[\infty,ge_{k}]_{q^{-1}}}\\
&=\frac{q^{(g+1)\sum_{k<h} e_ke_h}}{[\infty,l]_{q^{-1}}}\prod_k\frac{[\infty,ge_{k}-e_{k+1}]_{q^{-1}}[\infty, e_{k+1}]_{q^{-1}}}{[\infty,ge_{k}]_{q^{-1}}}.\end{align*}
Summing  over all possible dimensions $f$ yields $|\mathcal{N}^{(g)}_l|.$
\end{proof}

\medskip

Let us now deal with the similar case of $\vol({\underline{Rep}}^{0}_{\v,\fq})$. For dimension vectors $\v, \w \in \N^I$ we set
$$H(\v,\w,t)=\prod_i \frac{[\infty, \sum_{h''=i} \v_{h'} -\w_i]_t}{[\infty, \sum_{h''=i} \v_{h'}]_t}$$

\begin{lemma}\label{L:Hua15} For any $\v \in \N^I$ we have
$$\vol({\underline{Rep}}^{0}_{\v,\,\fq})=\sum_{\underline{\v}} q^{-\mathbf{a}(\underline{\v})} \prod_l H(\v^{(l)}, \v^{(l+1)}, q^{-1}) \prod_l [\infty,  \v^{(l)}]_{q^{-1}}$$
where
$$\mathbf{a}(\underline{\v})=\sum_{l >k}  \langle \v^{(k)}, \v^{(l)}\rangle + \sum_{i,l} (\v_i^{(l)})^2$$
and where the sum ranges over all tuples $\underline{\v}=(\v^{(1)}, \v^{(2)}, \ldots)$ of elements of $\N^I$ such that $\sum_l \v^{(l)}=\v$.
\end{lemma}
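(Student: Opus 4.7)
The plan is to mimic the stratification strategy used in the proof of Lemma~\ref{L:Hua1}, but with the one-vertex canonical flag replaced by the natural nilpotency filtration associated with a representation of the whole quiver. Given a nilpotent representation $x \in E^0_\v(\k)$, I would first introduce the decreasing filtration
$$F_{k,i}(x) = \sum_{\substack{\sigma \text{ path of length } \geqslant k \\ \sigma''=i}} \Im(x_\sigma) \subseteq V_i,$$
with $F_{0,i}(x)=V_i$ and $F_{k}(x)=0$ for $k$ large by nilpotency. Its graded dimensions $\v^{(l)}_i := \dim\bigl(F_{l-1,i}(x)/F_{l,i}(x)\bigr)$ define a tuple $\underline{\v}=(\v^{(1)},\v^{(2)},\ldots)$ with $\sum_l \v^{(l)}=\v$, giving a stratification $\underline{Rep}^0_{\v,\k} = \bigsqcup_{\underline{\v}} \underline{Rep}^0_{\underline{\v},\k}$ to be summed over in the end.

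To compute the volume of a fixed stratum, I would realize it as $[Y/P_{\underline{\v}}]$, where $P_{\underline{\v}}\subset G_\v$ is the parabolic stabilizer of a reference flag $F_\bullet$ of type $\underline{\v}$ and $Y$ is the locus of $x$ whose canonical filtration $F_\bullet(x)$ coincides with $F_\bullet$. Choosing compatible splittings $V_i = \bigoplus_l g^{(l)}_i$ with $g^{(l)}_i := F_{l-1,i}/F_{l,i}$, the preservation condition $x_h(F_{k,h'})\subseteq F_{k+1,h''}$ forces each $x_h$ to have block components $x_h^{(a,b)}: g^{(a)}_{h'}\to g^{(b)}_{h''}$ vanishing unless $b\geqslant a+1$. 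A short downward induction based on the identity $F_{k+1,j}(x)=\sum_{h:h''=j} x_h(F_{k,h'}(x))$ shows that $F_\bullet(x)=F_\bullet$ is equivalent to the surjectivity, for every $l\geqslant 1$ and every $j$, of the induced "one-above-diagonal" map
$$\bigoplus_{h\in\Omega,\,h''=j} g^{(l)}_{h'} \twoheadrightarrow g^{(l+1)}_j.$$
The strictly upper blocks ($b\geqslant a+2$) remain free.

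Counting surjections via~\eqref{E:surjmaps} and multiplying over all pairs $(l,j)$ produces the factor $\prod_l H(\v^{(l)},\v^{(l+1)},q^{-1})$ together with a $q$-power, while the strict-upper blocks contribute the additional $q$-power $\sum_h \sum_{b\geqslant a+2}\v^{(a)}_{h'}\v^{(b)}_{h''}$. The standard decomposition of $P_{\underline{\v}}$ into its Levi $\prod_{l,i} GL_{\v^{(l)}_i}$ and its unipotent radical gives
$$|P_{\underline{\v}}| = q^{\sum_i \sum_{a<b} \v^{(a)}_i \v^{(b)}_i} \prod_{l,i}|GL_{\v^{(l)}_i}(\fq)|,$$
and combining with $|GL_n(\fq)| = q^{n^2}/[\infty,n]_{q^{-1}}$ one obtains the $\prod_{l}[\infty,\v^{(l)}]_{q^{-1}}$ factor after forming $|Y|/|P_{\underline{\v}}|$.

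Finally, I would verify that the total $q$-exponent collapses to $-\mathbf{a}(\underline{\v})$ by applying the identity $\sum_h \v^{(a)}_{h'}\v^{(b)}_{h''} - \sum_i \v^{(a)}_i \v^{(b)}_i = -\langle\v^{(a)},\v^{(b)}\rangle$ termwise and summing over $a<b$. Summation over $\underline{\v}$ produces the stated formula. The only delicate step is the characterization of $Y$ carried out in the second paragraph: verifying that joint surjectivity of the one-above-diagonal blocks \emph{exactly} cuts out the locus on which the canonical filtration is the prescribed one, as opposed to merely being refined by it. The rest is a mechanical computation of volumes of explicit affine-space bundles over parabolic quotients.
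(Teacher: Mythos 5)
Your proposal is correct and follows essentially the same route as the paper's own proof: both stratify $\underline{Rep}^0_{\v,\fq}$ by the canonical decreasing filtration $U_k = \sum_h x_h(U_{k-1})$ (your $F_{k,i}$ coincides with $(U_k)_i$), characterize a stratum as an affine-space-times-surjections locus over a fixed flag, and extract the $q$-powers by combining with the parabolic factorization. Your downward-induction justification of the characterization of $Y$, and your explicit bookkeeping of $|P_{\underline{\v}}|$ yielding the $\prod_l[\infty,\v^{(l)}]_{q^{-1}}$ factor, are steps the paper leaves implicit, but the argument is the same.
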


\smallskip

\begin{proof} We stratify the stack ${\underline{Rep}}^0_{\v,\fq}$ as follows. To a nilpotent representation $x$ we associate the
decreasing flag $U(x)$ given by $V=U_0 \supset U_1 \supset U_2 \supset \cdots U_{l(x)}\supset U_{l(x)+1}=\{0\}$ where, for each $i\geqslant 1$, we have
$$U_i=\sum_h x_h(U_{i-1}).$$
We set $\v^{(k)}(x)=\dim (U_{k-1}/U_k) \in \N^I$ and for
$\underline{\v}=(\v^{(1)}, \v^{(2)}, \ldots)$ we set
$${\underline{Rep}}^0_{\v,\fq}[\underline{\v}]=\{x \in {\underline{Rep}}^0_{\v,\fq}\;; \v^{(l)}(x)=\v^{(l)}, \; l \geqslant 1\}$$
so that
$${\underline{Rep}}^0_{\v,\fq}= \bigsqcup_{\underline{\v}}{\underline{Rep}}^0_{\v,\fq}[\underline{\v}]$$
where the sum ranges over all tuples $\underline{\v}=(\v^{(k)})_k$ satisfying $\sum_l \v^{(l)}=\v$.
Next, we compute the volume of each strata. Given a flag $U_{\bullet}$ in $V$ of dimension $\underline{\v}$, the set of nilpotent representations $x$ satisfying $U(x)=U_\bullet$ is isomorphic to
$$\bigoplus_{l> k+1} L(U_k/U_{k+1}, U_{l}/U_{l+1})\oplus \bigoplus_l L^{surj}(U_l/U_{l+1}, U_{l+1}/U_{l+2}).$$
For each $I$-graded vector spaces $V,$ $W$ we set
$$L(V,W)=\bigoplus_h \Hom(V_{h'},W_{h''}), \quad L^{surj}(V,W)=\{(y_h)_h \in L(V,W)\,;\,\forall\;i \in I, \;\Im(\bigoplus_{h''=i} y_h)=W_i\}.$$
The volume of the substack of ${\underline{Rep}}^0_{\v, \fq}$ whose objects are the representations $x$ 
such that $U(x)=U_\bullet$ is thus equal to
$$q^{\sum_{l > k}\sum_{h}\v^{(k)}_{h'}\v^{(l)}_{h''}} \prod_{l}\prod_{i\in I} \frac{[\infty, \sum_{h''=i}\v^{(l)}_{h'}- \v^{(l+1)}_{i}]_{q^{-1}}}{[\infty, \sum_{h''=i}\v^{(l)}_{h'}]_{q^{-1}}}.$$
It only remains to sum up over all flags $U_{\bullet}$ of dimension $\underline{\v}$, and to sum up over all possible tuples $\underline{\v}$.
\end{proof}

\medskip

\subsection{Hua's formulas}\label{sec:2.3}\hfill\\

\subsubsection{Hua's formula}
In this section we write down the analogues of Hua's formula, for our nilpotent versions of Kac polynomials. We begin by recalling the original 
Hua's formula 
(or a formula equivalent to it).
Let $\nu=(\nu^i\,;\,i\in I)$ be an \emph{$I$-partition}, i.e., an $I$-tuple of partitions.
Consider the $I$-tuples of integers $|\nu|$ and $ \nu_k$ in $\bbN^I$ given, for each $k=1,2,\dots$, by
$$|\nu|=( |\nu^i|), \quad \nu_k=(\nu^i_k).$$
Let $X(\nu,t)\in\Q(t)$ be given by
$$X(\nu,t)=\prod_{k} t^{\langle \nu_k, \nu_k\rangle} \,[\infty,\nu_{k}-\nu_{k+1}]_t.$$
We define a power series $r(\w,t,z)$ in $\mathbf{L}_t$ depending on a tuple $\w \in \Z^I$ as 
\begin{align}\label{r}r(\w,t,z)=\sum_{\nu}X(\nu,t^{-1})\,t^{\w \cdot \nu_1}\,z^{|\nu|}.\end{align}
Then Hua's formula \cite{Hua} is
\begin{equation}\label{E:proof4.5}
\text{Exp}\Big(\frac{1}{t-1}\sum_{\v}
A_{\v}(t)\,z^{\v}\Big)=r(0,t,z).
\end{equation}

\smallskip

\subsubsection{The $1$-nilpotent Hua's formula}
Given an $I$-partition $\nu$ as in the previous section, we 
consider the element $X^{1}(\nu,t)\in\Q(t)$ given by
$$X^{1}(\nu,t)=\prod_{k} t^{\langle \nu_k, \nu_k\rangle}\, \,H(\nu_{k}-\nu_{k+1})_t.$$
Next, we define a power series in $r^{1}(\w,t,z)\in\mathbf{L}_t$ depending on a vector $\w \in \Z^I$ as 
\begin{align}\label{r0}r^{1}(\w,t,z)=\sum_{\nu}X^{1}(\nu,t^{-1})\,t^{\w \cdot \nu_1}\,z^{|\nu|}.\end{align}
Note that if $\underline\v=(\v_1,\dots,\v_s)$ is a tuple of elements in $\N^I$ such that $\nu_k-\nu_{k+1}=\v_k$ for all $k$, 
and if the integer $o(\underline\v)$ is as in \eqref{o}, then we have
$$o({\underline{\v}})-\sum_k\langle\v_k,\v_k\rangle=-\sum_k\langle\nu_k,\nu_k\rangle.$$

We can now state the following analogue of Hua's formula, which is a direct consequence of 
Proposition~\ref{P:Hua1}, Corollary~\ref{cor:nil} and Lemma~\ref{L:Hua1}.

\smallskip

\begin{corollary}\label{C:formulaAnil} For any $\v \in \N^I$ there exists a unique polynomial
$A_{\v}^{1}(t) \in \Q[t]$ such that for any finite field $\fq$ we have $A^{1}_{\v}(q)=A^{1}_{\v}( \mathbb{F}_{q })$. 
These polynomials are defined by the following identity~:
\begin{equation}\label{E:genhua}\Exp\Big(\frac{1}{t-1}\sum_\v A^{1}_{\v}(t)\,z^\v\Big)=r^{1}(0,t,z).\end{equation}
\end{corollary}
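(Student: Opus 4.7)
My strategy is to compute the generating series
\[
\Phi(q,z) := \sum_\v \vol(\underline{Rep}^{1,nil}_{\v,\fq})\, z^\v
\]
in two different ways, and then extract the polynomiality of $A^1_\v(\fq)$ in $q$ by Möbius inversion. The first computation is geometric: I would apply Corollary~\ref{cor:nil} to the Serre subcategory $C_\k = Rep^1_\k$ of $1$-nilpotent representations (which is closed under subobjects, quotients, and extensions, so the corollary applies), and then substitute Lemma~\ref{L:Hua1}, namely $\vol(\underline{Rep}^1_{\v_i, \fq}) = q^{-\langle \v_i, \v_i\rangle} H(\v_i)_{q^{-1}}$, to obtain
\[
\Phi(q,z) = \sum_{\underline\v} q^{o(\underline\v) - \sum_k \langle \v_k, \v_k \rangle}\, \prod_k H(\v_k)_{q^{-1}}\, z^{\sum_k k\v_k}.
\]
Parametrising tuples $\underline\v = (\v_k)$ by $I$-partitions $\nu$ via the bijection $\v_k = \nu_k - \nu_{k+1}$, one has $\sum_k k\v_k = |\nu|$, and the identity recalled just before the statement rewrites the exponent as $-\sum_k \langle \nu_k, \nu_k\rangle$. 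Comparing with the definitions of $X^1$ and $r^1$ in \eqref{r0}, this becomes exactly $\Phi(q,z) = r^1(0, q, z)$.

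The second computation is enumerative: Proposition~\ref{P:Hua1} applied to $C_\k = Rep^1_\k$ yields
\[
\Phi(q,z) = \Exp\Big(\frac{1}{q-1}\sum_\v A^1_\v(\fq)\, z^\v \Big).
\]
Combining the two expressions, I would obtain the equality
\[
r^1(0,q,z) = \Exp\Big(\frac{1}{q-1}\sum_\v A^1_\v(\fq)\,z^\v\Big)
\]
for every prime power $q$. To promote this family of equalities into a single identity in a formal variable $t$, I would use that $\log r^1(0,t,z)$ is a well-defined power series in $z$ with coefficients in $\Q(t)$, and Möbius-invert the plethystic exponential:
\[
\frac{1}{t-1}\sum_\v A^1_\v(t)\, z^\v = \sum_{l \geqslant 1} \frac{\mu(l)}{l}\, \psi_l\bigl(\log r^1(0,t,z)\bigr).
\]
This defines rational functions $A^1_\v(t) \in \Q(t)$ whose value at each prime power $q$ equals the integer $A^1_\v(\fq)$. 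Since those integers are non-negative for infinitely many $q$, each $A^1_\v(t)$ must in fact lie in $\Q[t]$, yielding both existence and uniqueness, with \eqref{E:genhua} holding by construction.

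The substantive work is all carried by the preceding results, so the main obstacle is really the bookkeeping in the first step: correctly matching the two indexings $\underline\v \leftrightarrow \nu$, checking that $\sum_k k\v_k = |\nu|$ under this bijection, and collapsing the quadratic forms in the exponent of $q$ using the identity $o(\underline\v) - \sum_k \langle \v_k, \v_k\rangle = -\sum_k \langle \nu_k, \nu_k\rangle$. Once this identification is made, the passage from the equalities at every $q$ to a single identity of power series in $t$ is a standard consequence of the plethystic Möbius inversion formula.
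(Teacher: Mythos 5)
Your proof follows exactly the route the paper intends: the Corollary is stated as a direct consequence of Proposition~\ref{P:Hua1}, Corollary~\ref{cor:nil}, and Lemma~\ref{L:Hua1}, and you assemble those three ingredients correctly, including the observation that $Rep^1_\kk$ is a Serre subcategory, the reparametrisation of tuples $\underline\v$ by $I$-partitions $\nu$ via $\v_k=\nu_k-\nu_{k+1}$, and the exponent identity $o(\underline\v)-\sum_k\langle\v_k,\v_k\rangle=-\sum_k\langle\nu_k,\nu_k\rangle$ recalled just before the statement.

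One step, however, needs correcting. You pass from $A^1_\v(t)\in\Q(t)$ to $A^1_\v(t)\in\Q[t]$ by arguing that the values are \emph{non-negative} for infinitely many $q$. Non-negativity alone does not force a rational function to be a polynomial: $1+t^{-2}$ is positive on all of $\mathbb{R}\setminus\{0\}$ but is not a polynomial. What does the work here is \emph{integrality}: a rational function $P/Q\in\Q(t)$, written in lowest terms with $P,Q\in\Z[t]$, that takes integer values at infinitely many integers must have $Q$ constant, since for such $n$ one has $Q(n)\mid P(n)$, hence $Q(n)$ divides the resultant $\mathrm{Res}(P,Q)$, a fixed nonzero integer, which bounds $|Q(n)|$. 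You should also make the short induction on $\v$ explicit when claiming that the $A^1_\v(t)$ obtained by M\"obius inversion of $r^1(0,t,z)$ satisfy $A^1_\v(q)=A^1_\v(\fq)$ for every prime power $q$: comparing the coefficient of $z^\v$ in $\log r^1(0,q,z)$ against the output of Proposition~\ref{P:Hua1} involves $A^1_{\v/l}(\fq^l)$ for every $l$ dividing $\v$ componentwise, so the identification for a given $\v$ uses that it has already been established for the strictly smaller vectors $\v/l$ with $l>1$.
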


\smallskip

\subsubsection{The nilpotent Hua's formula}
Given a sequence $\nu^{\bullet}=(\nu^{(1)}, \ldots, )$ of $I$-partition as in the previous section, 
for each $k,l\geqslant 1$ we 
set 
\begin{align*}
|\nu^{\bullet}|=\sum_l |\nu^{(l)}|, \quad \nu_k=\sum_l \nu^{(l)}_k,\quad
\v^{(l)}_k=\nu^{(l)}_k-\nu^{(l)}_{k+1},
\end{align*}
and we consider the element $X^{0}(\nu^{\bullet},t)$ in $\Q(t)$ given by
$$X^{0}(\nu^{\bullet},t)= t^{\mathbf{b}(\nu^{\bullet})} \, \prod_{l,k}\; [\infty, \v_k^{(l)}]_t\;H(\v^{(l)}_{k}, \v^{(l+1)}_{k})_t$$
where
$$\mathbf{b}(\nu^{\bullet})= \sum_k  \Big(\langle \nu_k, \nu_k\rangle-\sum_{l_1 \leqslant l_2 } \langle \v_k^{(l_1)},\v_k^{(l_2)}\rangle + \sum_{i,l} (\v_k^{(l)})^2\Big).$$
Next, we define a power series in $r^{0}(\w,t,z)$ in $\mathbf{L}_t$ depending on a vector $\w \in \Z^I$ as 
\begin{align}\label{r02}r^{0}(\w,t,z)=\sum_{\nu^\bullet}X^{0}(\nu^{\bullet},t^{-1})\,t^{\w \cdot \nu_1}\,z^{|\nu^{\bullet}|}.\end{align}

\smallskip

\begin{corollary}\label{C:formulaAnil2} For any $\v \in \N^I$ there exists a unique polynomial
$A_{\v}^{0}(t) \in \Q[t]$ such that for any finite field $\fq$ we have $A^{0}_{\v}(q)=A^{0}_{\v}( \mathbb{F}_{q })$. 
These polynomials are defined by the following identity~:
\begin{equation}\label{E:genhua2}\Exp\Big(\frac{1}{t-1}\sum_\v A^{0}_{\v}(t)\,z^\v\Big)=r^{0}(0,t,z).\end{equation}
\end{corollary}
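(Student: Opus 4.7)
The plan is to follow the strategy used in the proof of Corollary~\ref{C:formulaAnil}, now invoking the more elaborate volume formula of Lemma~\ref{L:Hua15} in place of Lemma~\ref{L:Hua1}. First I would apply Proposition~\ref{P:Hua1} to the Serre subcategory $C_\kk = Rep^0_\kk$ of nilpotent representations of $Q$, which, after the Galois-theoretic rewriting \eqref{E:P1}, identifies $\sum_\v \vol(\underline C^{nil}_{\v,\fq})\,z^\v$ with the plethystic exponential of $\tfrac{1}{q-1}\sum_\v A^0(C_{\v,\fq})\,z^\v$. It therefore suffices to establish the generating series equality
$$\sum_\v \vol(\underline C^{nil}_{\v,\fq})\,z^\v = r^0(0,q,z)$$
for every prime power $q$; the identity \eqref{E:genhua2} and the existence of $A^0_\v(t) \in \Q[t]$ then follow by extracting the plethystic logarithm inductively on $\v$, with uniqueness forced by the infinitely many evaluations $A^0_\v(q) \in \Z$.

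To expand the left-hand side, I combine Corollary~\ref{cor:nil} with Lemma~\ref{L:Hua15}: the former decomposes $\underline C^{nil}_{\v,\fq}$ by the Jordan type $\underline{\v}=(\v_1,\ldots,\v_s)$ of the nilpotent endomorphism, and the latter further stratifies each factor $\underline{Rep}^0_{\v_k,\fq}$ by its descending iterated image flag, with parameter $\underline{\v_k}=(\v_k^{(l)})_l$ satisfying $\sum_l \v_k^{(l)}=\v_k$. The combined summation runs over doubly-indexed families $(\v_k^{(l)})_{k,l}$ of elements of $\N^I$ with $\sum_k k\sum_l \v_k^{(l)} = \v$. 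I then reparameterize via the change of variables $\nu_k^{(l)}=\sum_{k'\geqslant k}\v_{k'}^{(l)}$, which turns each $\nu^{(l)}$ into an $I$-partition and the collection $\nu^\bullet=(\nu^{(l)})_l$ into a sequence of $I$-partitions with $|\nu^\bullet|=\v$; this is precisely the summation range in the definition \eqref{r02} of $r^0(0,q,z)$, and the identity $\sum_l |\nu^{(l)}|=\sum_k k\v_k=\v$ makes the $z$-exponents match automatically.

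The main obstacle will be the bookkeeping needed to verify that the resulting $q$-exponent $o(\underline{\v})-\sum_k \mathbf{a}(\underline{\v_k})$ matches $-\mathbf{b}(\nu^\bullet)$. The key inputs are the identity $o(\underline{\v})-\sum_k \langle \v_k,\v_k\rangle = -\sum_k \langle \nu_k,\nu_k\rangle$ recalled just after \eqref{r0} and used in the proof of Corollary~\ref{C:formulaAnil}, together with the bilinear expansion $\langle \v_k,\v_k\rangle=\sum_{l_1,l_2}\langle \v_k^{(l_1)},\v_k^{(l_2)}\rangle$ and a careful comparison of the strictly upper-triangular terms in $\mathbf{a}$ against the weakly upper-triangular terms in $\mathbf{b}$. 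The diagonal squared contributions $\sum_{i,l}(\v_{k,i}^{(l)})^2$ cancel directly between $\mathbf{a}$ and $\mathbf{b}$. Meanwhile the product factors $\prod_{k,l} H(\v_k^{(l)},\v_k^{(l+1)})_{q^{-1}}\,[\infty,\v_k^{(l)}]_{q^{-1}}$ reassemble tautologically into the factor $X^0(\nu^\bullet,q^{-1})$ appearing in \eqref{r02}, since both products already run over the indices $(k,l)$. Once the exponent and product comparisons are established, \eqref{E:genhua2} and the claimed existence and uniqueness of $A^0_\v(t)$ follow as above.
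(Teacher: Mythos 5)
Your proposal follows the paper's (very terse) argument exactly: the corollary is derived, in parallel with Corollary~\ref{C:formulaAnil}, by combining Proposition~\ref{P:Hua1} applied to $C=Rep^0$ with Corollary~\ref{cor:nil} and Lemma~\ref{L:Hua15}, and you correctly supply the change of variables $\nu_k^{(l)}=\sum_{k'\geqslant k}\v_{k'}^{(l)}$ together with the key exponent comparison. One caveat worth flagging for when you actually carry out that bookkeeping: using the identity $o(\underline\v)-\sum_k\langle\v_k,\v_k\rangle=-\sum_k\langle\nu_k,\nu_k\rangle$, the exponent that emerges is $-\sum_k\langle\nu_k,\nu_k\rangle+\sum_k\sum_{l_1\geqslant l_2}\langle\v_k^{(l_1)},\v_k^{(l_2)}\rangle-\sum_{k,i,l}(\v_{k,i}^{(l)})^2$, which agrees with $-\mathbf{b}(\nu^\bullet)$ only after transposing the Euler-form arguments in the displayed formula for $\mathbf{b}$ (i.e.\ the printed $\sum_{l_1\leqslant l_2}\langle\v_k^{(l_1)},\v_k^{(l_2)}\rangle$ should read $\sum_{l_1\geqslant l_2}\langle\v_k^{(l_1)},\v_k^{(l_2)}\rangle$); this appears to be a misprint, immaterial for the existence and uniqueness conclusion but relevant if you use \eqref{r02} verbatim.
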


\smallskip

\begin{remark}\label{R:kactranspo} Formula \eqref{E:genhua} implies that $A_{\v}^{1}(t)$, like $A_{\v}(t)$, 
does not depend on the orientation of the quiver $Q$. This is not the case for $A^0_{\v}(t)$ in general. However, since
indecomposability is obviously preserved under transposition, the polynomial $A^0_{\v}(t)$ is invariant under changing the direction of 
\textit{all} arrows.

\end{remark}

\medskip

\subsection{Proof of Proposition~\ref{P:prop1.1}}\hfill\\

To finish the proof of Proposition~\ref{P:prop1.1} we must prove that for each $\flat=0,1$ we have
$$A^\flat_{\v}(t) \in \Z[t],\quad A_{\v}^\flat(1)=A_{\v}(1).$$ 

\subsubsection{The integrality}
We will use the following argument 
due to Katz, see \cite[\S 6]{HRV}.

\smallskip

\begin{lemma}[Katz]\label{L:Katz} Let $Z$ be a constructible set defined over $\mathbb{F}_q$. 
Assume that there exists a polynomial $P(t)$ in $\mathbb{C}[t]$ such that for any
integer $l\geqslant 1$ we have $|Z(\mathbb{F}_{q^l})|=P(q^l)$. Then $P(t) \in \Z[t]$.  
\qed
\end{lemma}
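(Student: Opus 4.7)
The plan is to compare two presentations of the counting function $l \mapsto |Z(\mathbb{F}_{q^l})|$: the one furnished by the hypothesis, and the one produced by the Grothendieck--Lefschetz trace formula applied to $\ell$-adic compactly supported cohomology. After stratifying $Z$ by locally closed subvarieties defined over $\mathbb{F}_q$ and using additivity of Euler characteristics, one obtains
\[
|Z(\mathbb{F}_{q^l})| \;=\; \sum_i (-1)^i \tr\!\bigl(F^l,\, H^i_c(Z_{\overline{\mathbb{F}}_q},\, \overline{\mathbb{Q}}_\ell)\bigr), \qquad l \geqslant 1,
\]
where $F$ denotes the geometric Frobenius.

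Let $\alpha_1, \ldots, \alpha_N$ denote the distinct nonzero eigenvalues of $F$ acting on $\bigoplus_i H^i_c$, and let $m_k \in \mathbb{Z}$ be the virtual multiplicity of $\alpha_k$, defined as the alternating sum over $i$ of the dimensions of the corresponding generalized eigenspace inside $H^i_c$. Each $\alpha_k$ is an algebraic number, and the trace formula becomes $|Z(\mathbb{F}_{q^l})| = \sum_{k=1}^N m_k\, \alpha_k^l$ for all $l \geqslant 1$. Writing $P(t) = \sum_{j=0}^d a_j\, t^j$ with $a_j \in \mathbb{C}$ a priori, the hypothesis then yields
\[
\sum_{k=1}^N m_k\, \alpha_k^l \;=\; \sum_{j=0}^d a_j\, (q^j)^l \qquad \forall\, l \geqslant 1.
\]

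Now I invoke the linear independence of distinct geometric sequences: regrouping the identity above as a single sum $\sum_r c_r\, \beta_r^l = 0$ indexed by the distinct elements $\beta_r$ of $\{\alpha_1, \ldots, \alpha_N\} \cup \{1, q, \ldots, q^d\}$, a standard Vandermonde determinant argument (applied to $N+d+1$ consecutive values of $l$) forces every coefficient $c_r$ to vanish. Each $c_r$ is of the form $m_k$, $-a_j$, or $m_k - a_j$, according as $\beta_r$ arises only as some $\alpha_k$, only as some $q^j$, or as both. Matching the coefficient corresponding to $q^j$ yields $a_j = \sum_{k\,:\,\alpha_k = q^j} m_k \in \mathbb{Z}$, which gives $P(t) \in \mathbb{Z}[t]$.

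No individual step is difficult: the essential input is the existence of the virtual $\ell$-adic representation, which furnishes integer multiplicities $m_k$ and thereby upgrades the weaker rationality conclusion (obtainable by bare-hands Vandermonde interpolation from the values $P(q), P(q^2), \ldots$) to the integrality required. The only point requiring mild care is the bookkeeping when some $q^j$ coincides with a Frobenius eigenvalue $\alpha_k$, which is precisely what the regrouping step handles.
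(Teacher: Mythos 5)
Your proof is correct and is essentially the argument of Katz in the reference the paper cites ([HRV, \S 6]): reduce to a sum of locally closed strata, invoke the Grothendieck--Lefschetz trace formula to write $|Z(\mathbb{F}_{q^l})|$ as $\sum_k m_k\alpha_k^l$ with $m_k\in\mathbb{Z}$, and match against $\sum_j a_j q^{jl}$ by linear independence of distinct geometric progressions. One small remark: rather than invoking algebraicity of the Frobenius eigenvalues (a consequence of Weil II) to put everything inside $\mathbb{C}$, it is slightly cleaner to first note that Lagrange interpolation from $P(q^l)\in\mathbb{Z}$ already forces $P\in\mathbb{Q}[t]$, after which the Vandermonde comparison can be carried out entirely inside $\overline{\mathbb{Q}}_\ell$; but this does not affect the validity of what you wrote.
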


\smallskip

In order to use the above result, we must relate the polynomials $A^0_{\v}(t)$ and $A^{1}_{\v}(t)$ to the point count of some constructible sets. 
Set $\k=\fq$.
The sets of nilpotent and 1-nilpotent representations in $E_\v$ is the set of $\fq$-points of the Zariski closed subvarieties $E^0_{\v}$ and $E^{1}_{\v}$.
For $\flat=0$ or 1 we put
\begin{align*}
\text{Aut}^{\flat}_{\v}(\fq)&=\{(x,g)\in E^{\flat}_{\v}(\fq) \times GL_{\v}(\fq)\,;\,g \in \Aut_{Rep_{\fq}}(x)\},\\
\text{Aut}^{\flat,\,a.i.}_{\v}(\fq)&=\{(x,g)  \in \text{Aut}^\flat_{\v}(\fq)\,;\,x\; \text{is\;absolutely\ indecomposable}\}.
\end{align*}
Then $\text{Aut}^\flat_{\v}(\fq)$ is the set of $\fq$-points of an algebraic variety defined over $\mathbb{F}_q,$ 
while $\text{Aut}^{\flat, \,a.i.}_{\v}(\fq)$ is a constructible subset of $\text{Aut}_{\v}(\fq)$. 
The set $\text{Aut}^{\flat, a.i.}_{\v}(\fq)$ is compatible with field
extensions, i.e., we have
$$\text{Aut}^{\flat,\, a.i.}_{\v} (\mathbb{F}_{q^l})=\{(x, g)\in E^{\flat}_{\v}(\mathbb{F}_{q^l}) \times GL_{\v}(\mathbb{F}_{q^l})\,;\,x \;\text{is\;absolutely\; indecomposable}\,,\, g \in \Aut_{Rep_{\mathbb{F}_{q^l}}}(x)\}.$$
This property is not true if one replaces \emph{absolutely indecomposable} by \emph{indecomposable}.
Hence, for any integer $l \geqslant 1$, we have
$$\frac{|\text{Aut}^{\flat,\, a.i.}_{\v}(\mathbb{F}_{q^l})|}{|GL_{\v}(\mathbb{F}_{q^l})|}=|\{x \in E^\flat_{\v}(\mathbb{F}_{q^l})\,;\, x\;\text{is\;absolutely\; indecomposable}\}/\sim|=A^{\flat}_{\v}(q^l).$$
By Lemma~\ref{L:Katz} we deduce that 
$$A^{\flat}_{\v}(t) \cdot \prod_{i}\prod_{k=0}^{v_i-1} (t^{v_i}-t^k) \in \Z[t].$$
From the facts that $A^{\flat}_{\v}(t) \in \mathbb{Q}[t]$ and $ \prod_{i}\prod_{k=0}^{v_i-1} (t^{v_i}-t^k)$ is monic
we deduce that $A^{\flat}_{\v}(t) \in \Z[t]$ as wanted. 

\medskip

\subsubsection{The value at 1}

Let us now prove the equality $A_{\v}^{1}(1)=A_{\v}(1)$. 
Denote by $\Ind_{\v}(\fq)$ and $\Ind^{1}_{\v}(\fq)$ the set of isomorphism classes of absolutely indecomposable and 1-nilpotent absolutely indecomposable
representations of $Q$ of dimension $\v$ over $\fq$. Let the finite group $\fq^\times$ act on $\Ind_{\v}(\fq)$ as follows :
$$u \cdot x=(u^{\epsilon(h)}x_h)_h\ \text{with}\  \epsilon(h)=\begin{cases} 1 & \text{if}\;h' \neq h''\\ u & \text{if}\; h'=h.\end{cases}$$ 

\smallskip

Let us consider the possible sizes of the $\fq^\times$-orbits in $\Ind_{\v}(\fq)$. 
If $x \in \Ind_{\v}(\fq)$ and $u \in \fq^\times$ are such that $u \cdot x \simeq x$ then there is an element $g=(g_i)\in GL_{\v}(\fq)$ such 
that we have $u\cdot xg= gx$. In particular, if $h$ is a 1-loop at the vertex $i$ then $x_h$ maps the generalized eigenspace
$V_{i,\lambda}$ of $g_i$ to $V_{i, u\lambda}$. We deduce that if $x_h$ is not nilpotent then
there is an integer $l\in(0,v_i]$ such that $u^l=1$.
More generally, if there is a non-nilpotent polynomial $P(x_{h_1}, \ldots, x_{h_{g_i}})$ in the $1$-loops $h_1, \ldots, h_{g_i}$ at $i$ then there is an integer $l\in(0,v_i]$ 
such that $u^{l deg(P)}=1$. 

\smallskip

\begin{lemma}\label{L:engel} Let $k$ be a field, $V$ a finite-dimensional $k$-vector space of dimension $d$ and $\{x_1, \ldots, x_n\} \subset \End(V)$. If all the polynomials $P(x_1, \ldots, x_n)$ with no constant terms of degree at most $d^2$ are nilpotent then there exists a flag of subspaces $W_{\bullet}$ in $V$ such that
$x_i(W_j) \subseteq W_{j-1}$ for all $i,j$.
\end{lemma}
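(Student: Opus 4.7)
The plan is to reduce the statement to the classical fact that a finite-dimensional nil associative algebra is nilpotent, and then to extract the required flag from the power filtration of this algebra acting on $V$. Let $A\subseteq\End(V)$ denote the non-unital subalgebra generated by $x_1,\dots,x_n$, and let $\tilde A=k\cdot\id_V+A$ be its unitization inside $\End(V)$, so that $\dim\tilde A\leqslant d^2$.

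First I will show that every element of $A$ is nilpotent. For each $k\geqslant 0$, let $M_k\subseteq\tilde A$ be the $k$-linear span of monomials in $x_1,\dots,x_n$ of degree $\leqslant k$. The ascending chain $M_0\subseteq M_1\subseteq\cdots$ lies in the $d^2$-dimensional space $\End(V)$, so it must stabilize at some step $k_0\leqslant d^2$. The equality $M_{k_0}=M_{k_0+1}$ gives $M_{k_0}\cdot x_i\subseteq M_{k_0}$ for every $i$, and since $1\in M_{k_0}$ this forces $M_{k_0}=\tilde A$. Hence every element of $A$ is expressible as a polynomial in the $x_i$ of degree $\leqslant d^2$ with no constant term, and so is nilpotent by hypothesis.

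Next, I invoke the standard structural fact that the Jacobson radical of a finite-dimensional associative algebra is nilpotent. The two-sided ideal $A\trianglelefteq\tilde A$ is nil by the previous step, hence is contained in $J(\tilde A)$; finite-dimensionality of $\tilde A$ then yields $A^N=0$ for some $N\geqslant 1$.

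Finally, I define the flag by $W_j=A^{N-j}V$ for $0\leqslant j\leqslant N$ (with the convention $A^0V=V$). Then $W_0=A^NV=0$, $W_N=V$, the $W_j$ form an ascending chain of subspaces of $V$, and
$$x_i(W_j)=x_i\,A^{N-j}V\subseteq A^{N-j+1}V=W_{j-1}$$
for all $i,j$, which is the required property. The one genuinely nontrivial ingredient is the nilpotence of the Jacobson radical in finite dimension (equivalently, of finite-dimensional nil associative algebras); everything else is bookkeeping, the degree bound $d^2$ in the hypothesis being tuned precisely so that every element of $A$ lies within its scope.
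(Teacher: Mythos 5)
Your proof is correct in substance, but it takes a genuinely different route from the paper's. The paper reduces to Engel's theorem for Lie algebras: it shows that the \emph{Lie} subalgebra of $\mathfrak{gl}(V)$ generated by $\{x_1,\dots,x_n\}$ consists of nilpotent elements, by bounding the degree (as associative polynomials with no constant term) of iterated brackets via the filtration $U_0\subseteq U_1\subseteq\cdots$, and then invokes Engel to produce the flag. You instead stay entirely within associative algebra: you show the non-unital algebra $A$ generated by the $x_i$ is a nil ideal of $\tilde A$, hence sits inside the Jacobson radical, hence is nilpotent, and you then read off the flag from the power filtration $A^{N-j}V$. Both are one-line appeals to a classical structure theorem (Engel vs.\ nilpotence of $J(\tilde A)$ for finite-dimensional algebras); yours has the mild advantage of being self-contained within associative ring theory and of producing the flag explicitly rather than through the existential statement of Engel's theorem, while the paper's is slightly slicker on the degree bound because Lie brackets automatically have no constant term.

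One small imprecision worth tightening: in your first step you show $M_{k_0}=\tilde A$ and then assert that every element of $A$ is a polynomial of degree $\leqslant d^2$ \emph{with no constant term}. The stabilization of $M_\bullet$ only directly gives degree $\leqslant k_0$, not the absence of a constant term, and a priori a high-degree monomial $x_{i_1}\cdots x_{i_m}$ could rewrite inside $M_{k_0}$ with a nonzero coefficient on $1$. The cleanest repair is to run the same stabilization argument on $N_k$, the span of monomials of degree between $1$ and $k$ (so $N_k\subseteq A$): the chain $N_1\subseteq N_2\subseteq\cdots$ stabilizes at some $k_0\leqslant d^2$, closure under right multiplication by the $x_i$ together with $x_i\in N_{k_0}$ forces $N_{k_0}=A$, and then every element of $A$ is genuinely a constant-free polynomial of degree $\leqslant d^2$. (Alternatively one can keep $M_k$ and add a short case analysis showing $1\notin A$, but working with $N_k$ is more direct.) With this adjustment the argument is complete.
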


\smallskip

\begin{proof}
By Engel's theorem, it is enough to prove that the Lie subalgebra of $gl(V)$ generated by $\{x_1, \ldots, x_n\}$ consists of nilpotent
elements. Set $U =\sum_i k x_i$. Consider the increasing filtration $U=U_0 \subseteq U_1 \subseteq \cdots$, where
$U_i= U_{i-1} + [U,U_{i-1}]$. This filtration stabilizes at, say, $U_k$ to the Lie subalgebra generated by $\{x_1, \ldots, x_n\}$.
Moreover $U_l \neq U_{l+1}$ for all $l<k$. Hence $k \leqslant d^2$. It remains to notice that any element of $U_k$ is a polynomial in $x_1, \ldots, x_n$ of degree at most $k$.
\end{proof}

\smallskip

We deduce from Lemma~\ref{L:engel} that an element
$x \in \Ind_{\v}(\fq)$ is not 1-nilpotent if and only if there exists some vertex $i$ and some polynomial $P(x_{h_1}, \ldots, x_{h_{g_i}})$ 
with zero constant term of degree $t \leqslant \v_i^2$ and consisting of $1$-loops at $i$, which is not nilpotent.
Set $N=\sup\{v_i^3\,;\,i\in I\}$. By the above, any $u \in\fq^\times$ stabilizing a non $1$-nilpotent element $x \in  \Ind_{\v}(\fq)$, 
satisfies $u^k=1$ for some integer $k\in(0,N]$. Hence for any $x \in  \Ind_{\v}(\fq) \backslash \Ind^{0}_{\v}(\fq)$ we have
$$|\Stab_{\fq^\times} (x)| \leqslant \sum_{k=1}^{N} |\mu_k(\fq)| \leqslant \binom{N}{2}.$$
It follows that for such $x$ we have
$$|\fq^\times \cdot x| \in \frac{q-1}{\binom{N}{2}!}\N.$$

\smallskip

Thus, for any field $\fq$ we have
\begin{equation}\label{E:qequalsone}
(q-1)^{-1}(A_{\v}(q)-A_{\v}^{1}(q))=(q-1)^{-1}\cdot |\Ind_{\v}(\fq) \backslash \Ind^{0}_{\v}(\fq)| \in \frac{1}{\binom{N}{2}!}\N.
\end{equation}
This implies that the polynomial $A_{\v}(t)-A_{\v}^{1}(t)$ is divisible by $(t-1)$. Otherwise, we would have
$$\frac{1}{t-1}(A_{\v}(t)-A_{\v}^{1}(t)) \in \Z[t] + \frac{c}{t-1}$$
for some nonzero $c \in \Z$ and (\ref{E:qequalsone}) would be false for $q \gg 1$.

\medskip

The proof that $A^0_{\v}(1)=A_{\v}(1)$ follows the same lines. Denote by $\Ind^0_{\v}(\fq)$ the set of all absolutely indecomposable nilpotent representations of $Q$ of dimension $\v$. We now use the action of $\fq^\times$ on $\Ind_v(\fq)$ and $\Ind^0_{\v}(\fq)$ defined by 
$$u \cdot x= u \cdot (x_h)= (ux_h).$$
Arguing as above, we see that the stabilizer of any representation $x \in \Ind_{\v}(\fq)\backslash \Ind_{\v}^0(\fq)$ consists of elements $u \in \fq^\times$ satisfying $u^k=1$ for some $k \in (0,N']$, where $N'$ is an integer depending on $\v$ but not on $q$.
Specifically, one can take $N'=(\sum_i \v_i)^3$. The rest of the argument is the same.

\smallskip

\begin{example} Let us consider the case of the quiver with one vertex and $g$ loops. Then
\begin{align*}
A_{1}^{1}(t)&=1, \\
A_{2}^{1}(t)&=\frac{t^{g}-1}{t-1},\\
A_3^{1}(t)&= t^{2(g-1)} + \frac{t^{2(g-1)}-1}{t^2-1} \cdot \left( \frac{t^{g+1}-1}{t-1} + \frac{t^g-1}{t-1}\right)
\end{align*}
whereas
\begin{align*}
A_1(t)&=t^g, \\
A_2(t)&=t^{2g-1}\frac{t^{2g}-1}{t^2-1},\\
A_3(t)&=\frac{t^{9g-3}-t^{5g+1}-t^{5g}-t^{5g-1}+t^{3g-1}+t^{3g-2}}{(t^2-1)(t^3-1)}.
\end{align*}
Further, we have $A_1(1)=A_1^{1}(1)=1,$ $ A_2(1)=A_2^{1}(1)=g$ and $A_3(1)=A_3^{1}(1)=2g^2-g$.

\end{example}

\medskip

\section{Nakajima's quiver varieties and their attracting subvarieties}

\medskip

In this section we consider variants of $\Lambda_{\v}$ and $\Lambda^{\flat}_{\v}$ in the context of Nakajima quiver varieties, and compute their number of points over finite fields. This will then 
be used in the next section to prove Theorem~\ref{T:main}. Let $\kk$ be any field.
When we want to specify the field over which we consider a variety $X$ we write $X/\kk.$

\smallskip

\subsection{Quiver varieties}\hfill\\

Let $Q$ be a finite quiver.  We recall the definition of Nakajima quiver
varieties and state some of their properties.
See \cite{Nakajima} for details, and see \cite[\S 4]{Mozgovoy} in the case of an arbitrary field.
Fix $\v, \w \in \N^I$. Let $V, $ $W$ be $I$-graded $\kk$-vector spaces of dimensions $\v,$ $ \w$ and set
$$M(\v,\w) =\bigoplus_{h \in \bar\Omega} \text{Hom}(V_{h'}, V_{h''}) \oplus
\bigoplus_{i \in I} \text{Hom}(V_i, W_i) \oplus \bigoplus_{i \in I}
\text{Hom}(W_i, V_i).$$
Elements of $M(\v,\w)$ will be denoted $m=(\bar x, p,q)$. The space
$M(\v,\w)$ carries a natural symplectic structure, and the group $G_{\v}=\prod_i
GL(V_i)$ acts in a Hamiltonian
fashion. The moment map is
\begin{equation}\label{E:momentmap}
\mu~: M(\v,\w)  \longrightarrow \bigoplus_i \frakgl(V_i),\quad
(\bar x, p, q)  \mapsto \Big( \sum_{\substack{h
\in \Omega \\ h'=i}} x_{h^*}x_h - \sum_{\substack{h \in \Omega\\ h''=i}} x_{h}x_{h^*} +
q_ip_i\,;\,i\in I\Big)
\end{equation}

\smallskip

The categorical quotient of $\mu^{-1}(0)$ by $G_\v$ is the $\kk$-variety
$$\frakM_0(\v,\w)=\mu^{-1}(0) /\hspace{-.05in}/ G_{\v}=\text{Spec}\Big(\bigoplus_{n\in\bbN}\k[\mu^{-1}(0)]^{G_\v}\Bigr).$$ 
It is affine and singular in general. Let $[\bar x,p,q]$ denote the image in $\frakM_0(\v,\w)$ of the triple $m=(\bar x,p,q)$ in $\mu^{-1}(0)$.
We'll abbreviate $0=[0,0,0]$.
If the field $\kk$ is algebraically closed then the set of $\kk$-points of $\frakM_0(\v,\w)$ 
is to the set of closed $G_\v(\kk)$-orbits in $\mu^{-1}(0)(\kk)$
so that $[\bar x,p,q]$ is the unique closed orbit in the closure of the $G_\v(\kk)$-orbit of $(\bar x,p,q)$.

\smallskip

Let $\theta$ be the character of $G_\v$ defined by $\theta(g_i)=\prod_{i\in I}\det(g_i)^{-1}$.
Consider the smooth symplectic quasiprojective variety of dimension
$2d(\v,\w)=2\v \cdot \w - (\v,\v)$ given by
$$\frakM(\v,\w)=\text{Proj}\Big(\bigoplus_{n\in\bbN}\k[\mu^{-1}(0)]^{\theta^n}\Bigr),$$
where
$$\k[\mu^{-1}(0)]^{\theta^n}=\{f\in\k[\mu^{-1}(0)]\,;\,f(g\cdot m)=\theta^n(g)f(m)\,,\,\forall m\in\mu^{-1}(0)\}.$$
There is a natural projective morphism 
$$\pi : \frakM(\v,\w) \to \frakM_0(\v,\w).$$
We say that an element
$(\bar x, p, q) \in \mu^{-1}(0)(\kk)$ is semistable if
the following condition is satisfied~:
$$\big(V' \subset \bigoplus_i \text{Ker}(p_i)\;\;\text{and}\;\; \bar x(V')
\subset V' \; \big)\Rightarrow V'=\{0\}.$$
Let $\mu^{-1}(0)^s$ be the open subset of $\mu^{-1}(0)$ consisting of semistable points. 
Using the Hilbert-Mumford criterion one finds that if $\kk$ is algebraically closed then
$\frakM(\v,\w)$ is the geometric quotient $$\frakM(\v,\w)=\mu^{-1}(0)^s/\!\!/G_{\v}$$
and the map $\pi$ takes the $G_\v$-orbit of $(\bar x,p,q)$ to $[\bar x,p,q]$.

\smallskip

Consider the following closed subvarieties of $\frakM(\v,\w)$~:
\begin{align*}
\frakL^0(\v,\w)&=\{G_{\v}\cdot (\bar x,p,q)\,;\, q=0, \,\bar x\;\text{is\;semi-nilpotent}\},\\
\frakL^{1}(\v,\w)&=\{ G_{\v}\cdot (\bar x, p, q)\,;\, q=0,\,\bar x\;\text{is\;strongly\;semi-nilpotent}\},\\
\frakL(\v,\w)&=\{ G_{\v}\cdot (\bar x, p, q) \,;\, q=0,\;\bar x\;\text{is\;nilpotent}\},\\
\frakM^0(\v,\w)&=\{G_{\v}\cdot (\bar x, p,q) \in \frakM(\v,\w)\,;\, x\;\text{is\;nilpotent}\},\\
\frakM^{1}(\v,\w)&=\{G_{\v}\cdot (\bar x, p,q) \in \frakM(\v,\w)\,;\, x\;\text{is\;$1$-nilpotent}\}.
\end{align*}
By definition, we have
$$\frakL^0(\v,\w)=\Big[ \mu^{-1}(0)^s \cap \Big(\Lambda^0_{\v} \times \bigoplus_i \text{Hom}(V_i,W_i)\Big)\Big] \Big/ G_{\v},$$
and a similar description $\frakL^{1}(\v,\w),$ $ \frakL(\v,\w)$ and $\frakM^\flat(\v,\w)$. 
Note that the definitions of $\frakM^{0}(\v,\w)$ and $\frakM^{1}(\v,\w)$ only involve the edges in $\Omega$.
The variety $\frakL(\v,\w)=\pi^{-1}(0)$ is projective.
If $Q$ has no 1-loop then we have
$$\frakL(\v,\w)=\frakL^{1}(\v,\w),\quad\frakM(\v,\w)=\frakM^{1}(\v,\w).$$
If $Q$ has no oriented cycle besides products of $1$-loops then we have 
$$\frakL^{1}(\v,\w)=\frakL^0(\v,\w),\quad\frakM^0(\v,\w)=\frakM^{1}(\v,\w).$$
In general the following inclusions are strict $$\frakL(\v,\w) \subseteq  \frakL^{1}(\v,\w) \subseteq \frakL^0(\v,\w).$$ 
The variety $\frakL(\v,\w)$ may not be equidimensional.
The variety $\frakL^{1}(\v,\w)$ is Lagrangian in $\frakM(\v,\w)$ by \cite[thm~1.15]{Bozec}.
The same holds for $\frakL^0(\v,\w)$. Their irreducible components may be parametrized by
connected components of the variety of fixed points in $\frakM(\v,\w)$ under suitable torus actions, see Proposition~\ref{P:lag} below.

\smallskip

Replacing $Q$ by its opposite $Q^*$ we define the subvarieties $\frakL^{*,0}(\v,\w)$ and $\frakM^{*,0}(\v,\w)$ of  $\frakM^*(\v,\w)$.  
Note that $\frakM^*(\v,\w)$ and $\frakM(\v,\w)$ are canonically isomorphic. Under this isomorphism we have 
$\frakL^0(\v,\w)=\frakL^{*,0}(\v,\w)$. We have the following chain of inclusions
$$
\xymatrix{
& \frakL^{1}(\v,\w) \ar[r] & \frakL^0(\v,\w) \ar[r] & \frakM^{0}(\v,\w) \ar[r] & \frakM^{1}(\v,\w) \ar[rd]&\\
\frakL(\v,\w) \ar[rd] \ar[ur]& & & & &\frakM(\v,\w)\\
& \frakL^{*,1}(\v,\w) \ar[r] & \frakL^{*,0}(\v,\w) \ar[r] & \frakM^{*,0}(\v,\w) \ar[r] & \frakM^{*,1}(\v,\w) \ar[ru]&
}$$

\medskip

\subsection{Reminder on tori actions on varieties}\hfill\\

\subsubsection{The Bialynicki-Birula decomposition}

An \emph{affine space bundle} of rank $d$ over $X$ is a map $f:Y\to X$ such that $X$ can be covered by open affine subsets 
$U_i$ such that
$f^{-1}(U_i)\simeq\bbA^d\times U_i$ and $f$ corresponds under this bijection to the projection on the second factor.

\smallskip

The proof of Bialynicki-Birula uses the assumption that the field $\kk$ is algebraically closed (of any characteristic)
and that the variety $X$ is smooth and projective. We'll need it for any field $\kk$ and for non smooth quasi-projective varieties $X$.
The restriction on the field is not essential, since given a partition
$X=\bigsqcup_{\rho}X_{\rho}$ of a $\fqb$-variety $X$ into locally closed subsets with affine space bundles 
$p_{\rho}~: X_{\rho} \to F_\rho$
we can always assume that the decomposition and the maps $p_\rho$ are defined over $\fq$ up to taking the field $\fq$ large 
enough to contain
the field of definition of $X$, $X_\rho$, $F_\rho$ and $p_\rho$ for all parameter $\rho$.

\smallskip

In the general situation, it is possible to use Hesselink's generalization of the Bialynicki-Birula decomposition 
\cite[thm~5.8]{Hesselink}
which holds true for arbitrary fields $\kk$ and for non smooth quasi-projective varieties $X$.
In particular, the following holds. Let $X$ be an $\fq$-variety with a $\bbG_m$-action. Assume that $X$ embeds equivariantly in a 
projective space with a diagonalizable
$\bbG_m$-action, or, equivalently, that we have a very ample $\bbG_m$-linearized line bundle over $X$. 
Assume also that the $\bbG_m$-fixed point locus $X^{\bbG_m}$ is contained
in the regular locus of $X$. Define the \textit{attracting variety} $X'$  as follows~:
$$X'=\{x \in X\,;\, \lim_{t \to 0} t\cdot x\;\text{exists}\}.$$
 Then we have a partition $X'=\bigsqcup_{\rho}X_{\rho}$ into locally closed subsets with affine space bundles 
$p_{\rho}$ as above such that $X^{\bbG_m}=\bigsqcup_\rho F_\rho$ is the decomposition into connected components.
Further, there is a filtration $(X_{\leqslant \rho})$ of $X$ by closed subsets and an ordering of the components $F_\rho$ such that 
$X_\rho=X_{\leqslant\rho}\setminus X_{<\rho}$. Note that this filtration depends on the choice of the equivariant embedding of $X$ 
in a projective space with a diagonalizable $\bbG_m$-action.

\smallskip

\subsubsection{Roots} Let $T$ be the $n$-dimensional torus $(\bbG_m)^n$.
Let 
$$X_*(T)= \Hom( \mathbb{G}_m, T),\quad X^*(T)= \Hom( T,\mathbb{G}_m)$$ 
be the groups of cocharacters and characters of $T$.
We identify both with $\Z^n$ in the obvious way.

\smallskip

The set of \emph{roots} of a smooth $T$-variety $X$  is the set of characters of $T$ appearing in the normal bundles
to the connected components of the fixed points locus $X^T$. We'll say that a cocharacter $\gamma\in X_*(T)$
is \emph{generic} if the canonical pairing $\alpha\cdot\gamma$ is nonzero for each root $\alpha\in X^*(T)$.
In this case, the fixed point locus $X^\gamma=X^{\gamma(\bbG_m)}$ coincides with $X^T$.

\medskip

\subsection{The Bialynicki-Birula decompositions of $\frakM(\v,\w)$}\hfill\\

The subvarieties $\frakL^\flat(\v,\w),$  $\frakL(\v,\w)$ and $\frakM^{\flat}(\v,\w)$ of $\frakM(\v,\w)$ 
all have a geometric origin given in terms of attracting 
subvarieties for suitable $\mathbb{G}_m$-actions on $\frakM(\v,\w)$. Let us explain this. 

\smallskip

\subsubsection{The cases
of $\frakL^0(\v,\w)$, $\frakL(\v,\w)$ and $\frakM^0(\v,\w)$.} Let $T=(\mathbb{G}_m)^2$ be the two-dimensional torus acting on the $\kk$-variety $M(\v,\w)$ by
$$(t_1,t_2) \cdot (\bar x, p, q)=(t_1x, t_2x^*, t_2p, t_1q).$$
This yields a $T$-action on $\frakM(\v,\w)$, $\frakM_0(\v,\w)$ such that the map $\pi$ is $T$-equivariant and
$$\frakM^0(\v,\w)^{T}=\{0\}.$$
We deduce that the fixed point variety $\frakM(\v,\w)^{T}$ decomposes as 
a disjoint union
$$\frakM(\v,\w)^T = \bigsqcup_{\rho \in \chi} F_{\rho}$$
of smooth connected projective varieties $F_{\rho}$ which embed into $\frakL(\v,\w)$.

\smallskip

Now, given a cocharacter $\gamma$, the variety $\frakM(\v,\w)^{\gamma}$
is smooth, not necessarily projective and it is
the disjoint union of smooth connected closed subvarieties
$$\frakM(\v,\w)^{\gamma}=\bigsqcup_{\kappa \in \chi_{\gamma}} F_{\kappa}^\gamma.$$
Let $\frakM_\gamma \subseteq \frakM(\v,\w)$ be the attracting subvariety defined by
$$\frakM_\gamma=\{x \in \frakM(\v,\w)\,;\, \lim_{t \to 0} \gamma(t) \cdot x\;\text{exists}\}.$$

\begin{proposition}\label{P:attract} Let $\gamma=(a,b)$ and fix $\v,$ $\w$ in $\bbN^I$.
If the field $\kk$ is large enough then 
\hfill
\begin{itemize}[leftmargin=8mm]
\item[$\mathrm{(a)}$] $a,b\geqslant 0\Rightarrow\frakM_\gamma=\frakM(\v,\w)$,

\item[$\mathrm{(b)}$] $a,b<0\Rightarrow\frakM_\gamma=\frakL(\v,\w)$,

\item[$\mathrm{(c)}$] \begin{enumerate}[leftmargin=0mm]
\item[] $a<0=b\Rightarrow\frakM_\gamma=\frakL^0(\v,\w)$,
\item[] $b<0=a\Rightarrow\frakM_\gamma=\frakL^{*,0}(\v,\w)$,
\end{enumerate}
\item[$\mathrm{(d)}$] \begin{enumerate}[leftmargin=0mm]
\item[] $a <0<b $ with $|b/a| \gg 1\Rightarrow\frakM_\gamma=\frakM^0(\v,\w)$,
\item[] $b <0<a$ with $|a/b| \gg 1\Rightarrow\frakM_\gamma=\frakM^{*,0}(\v,\w)$.
\end{enumerate}
\end{itemize}
\end{proposition}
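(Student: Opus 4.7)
The plan is to identify each attracting set by combining the projectivity of $\pi\colon\frakM(\v,\w)\to\frakM_0(\v,\w)$ with a weight computation on the affine quotient. Since $\pi$ is projective and $T$-equivariant, the valuative criterion of properness gives the following criterion: $[m]\in\frakM_\gamma$ if and only if its image $\pi([m])$ admits a $\gamma$-limit in the affine variety $\frakM_0(\v,\w)$. For an affine variety this last condition translates into a vanishing statement on invariants.

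By the Le Bruyn--Procesi description (and its framed analogue due to Crawley-Boevey and Nakajima), the coordinate ring of $\frakM_0(\v,\w)$ is generated by traces $\Tr(\bar x_\sigma)$ along oriented cycles $\sigma$ in $\bar Q$, together with framed invariants $p_i\bar x_\sigma q_j$ along oriented paths $\sigma$ from $j$ to $i$ in $\bar Q$. Recording the arrow counts $n_\Omega(\sigma)$ and $n_{\Omega^*}(\sigma)$, a cycle trace has $\gamma$-weight $an_\Omega(\sigma)+bn_{\Omega^*}(\sigma)$, and a framed invariant has $\gamma$-weight $a(n_\Omega(\sigma)+1)+b(n_{\Omega^*}(\sigma)+1)$ (since $p$ has weight $b$ and $q$ has weight $a$). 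The $\gamma$-limit of $\pi([m])$ exists in $\frakM_0$ if and only if every invariant of strictly negative $\gamma$-weight vanishes on $m$.

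The four cases then reduce to weight bookkeeping. In (a) no weight is strictly negative, giving $\frakM_\gamma=\frakM(\v,\w)$. In (b) every non-constant invariant has strictly negative weight, forcing $\pi([m])=0$; the classical identification $\pi^{-1}(0)=\frakL(\v,\w)$ (semistability plus vanishing of framed invariants forces $q=0$, while vanishing of all cycle traces forces nilpotency of $\bar x$) completes the inclusion, the converse being automatic since $\frakL(\v,\w)$ is projective and $\gamma$-stable. In (c), with $\gamma=(a,0)$ and $a<0$, the strictly negative weight invariants are precisely the $\Omega$-containing cycle traces together with \emph{all} framed invariants (the latter because $q$ has weight $a$); framed vanishing plus semistability gives $q=0$, while vanishing of all $\Omega$-containing cycle traces is equivalent to semi-nilpotency of $\bar x$, yielding $\frakL^0(\v,\w)$. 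Case (d) is analogous: taking $|b/a|$ larger than the maximum ratio $n_\Omega/n_{\Omega^*}$ occurring among generating invariants (bounded uniformly in $\v$ by Cayley--Hamilton, so that long cycles reduce to short ones), the remaining strictly negative weight invariants are traces of cycles entirely in $\Omega$, whose vanishing is equivalent to nilpotency of $x$.

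The technical core is the case (c) equivalence: vanishing of the traces of all $\bar Q$-cycles containing at least one $\Omega$-arrow on $\bar x$ if and only if $\bar x$ is semi-nilpotent. The easy direction is an immediate flag-drop calculation. The converse is a Hilbert--Mumford argument for the joint $G_\v\rtimes\gamma(\bbG_m)$-action on $\bar E_\v$: the hypothesised vanishing places $\bar x$ in the null cone of this action, and a destabilising cocharacter of $G_\v$, rescaled by $|a|$, induces a grading on $V$ whose associated decreasing filtration is precisely a semi-nilpotency flag. This same cocharacter provides the compensator $\lambda(t)\in G_\v$ used to lift the limit from $\frakM_0$ back to $\mu^{-1}(0)^s$. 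The hypothesis ``$\kk$ large enough'' enters here, to guarantee that Newton-type identities convert vanishing of cycle traces into vanishing of the appropriate characteristic polynomials and to ensure that the destabilising cocharacter descends to $\fq$.
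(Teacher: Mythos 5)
Your overall strategy agrees with the paper's: reduce, via the projectivity and $T$-equivariance of $\pi$, to determining which points of the affine quotient $\frakM_0(\v,\w)$ admit a $\gamma$-limit; then use the Le~Bruyn--Procesi/Donkin description of $\k[\frakM_0(\v,\w)]$ and weight bookkeeping to turn this into a vanishing condition on cycle traces and framed invariants. Cases (a), (b), the $q=0$ step in (c), and your Cayley--Hamilton bound in (d) all match the paper in substance.

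There is, however, a genuine gap in the hard direction of case~(c). You claim that vanishing of the negative-$\gamma$-weight invariants places $\bar x$ in the null cone of the joint $G_\v\times\gamma(\bbG_m)$-action, so that a destabilising cocharacter supplies the semi-nilpotency flag. That is false: with $\gamma=(a,0)$ the $\Omega^*$-only cycle traces carry $\gamma$-weight zero, are invariants of the joint action, and need not vanish on a semi-nilpotent $\bar x$. Already for the Jordan quiver, $\bar x=(0,x^*)$ with $x^*$ invertible is semi-nilpotent, yet $\Tr\bigl((x^*)^n\bigr)\neq 0$; so $\bar x$ is not in the null cone and no destabilising cocharacter exists. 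Your Hilbert--Mumford step therefore does not apply. The paper instead argues directly: the vanishing hypothesis forces $\bar x_\sigma$ to have zero power traces, hence (in large characteristic) to be nilpotent, for every cycle $\sigma$ of $\bar Q$ containing at least one $\Omega$-arrow; consequently the ideal $A^+\subset \Im(\kk\bar Q\to\End V)$ generated by the $\Omega$-arrows is a finite-dimensional algebra of nilpotent endomorphisms, hence nilpotent by Wedderburn's theorem, and the flag $K^l=\Im\bigl((A^+)^l\bigr)$ is the required semi-nilpotency flag (dropping under each $x_h$ and preserved by each $x_{h^*}$). You would need to replace the null-cone appeal by such an explicit construction, or by a correctly formulated instability statement.
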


\smallskip

\begin{proof} Parts (a) and (b), are well-known, see \cite{Nak}.
Part (c) is stated in \cite[\S 2.1]{Bozec}. Part (d) is proved using a similar argument. For the convenience of the reader, we give a proof of both here. Since we may assume that the field $\kk$ is as large as we want, we may as well assume that it is indeed algebraically closed.
In addition, it is clearly enough to treat the first statements of (c) and (d).

\smallskip

We have
$$\frakL^0(\v,\w)=\pi^{-1}\pi(\frakL^0(\v,\w)), \quad \frakM^0(\v,\w)=\pi^{-1}\pi(\frakM^0(\v,\w)).$$
To see this, we may for instance use the description of the projective map $\pi : \frakM(\v,\w) \to \frakM_0(\v,\w)$
as a semi-simplification map, see \cite[prop. 3.20]{Nakajima} together with the fact that a filtered representation 
$\overline{x} \in \overline{E}_{\v}$ is semi-nilpotent if and only its associated graded is (resp. a filtered representation $x \in E_{\v}$ is nilpotent if and only if its associated graded is).

\smallskip

Since the map $\pi$ is projective and $T$-equivariant, for any value of $\gamma$ we have
$$\frakM_\gamma=\pi^{-1}\pi(\frakM_\gamma),\quad
\pi(\frakM_\gamma)=\{x \in \frakM_0(\v,\w)\,;\, \lim_{t \to 0} \gamma(t) \cdot x\;\text{exists}\}.$$ 
Hence it suffices to prove that for $a <0=b$ we have
\begin{equation}\label{E:propattract0}
\pi(\frakL^{0}(\v,\w))=\{x \in \frakM_0(\v,\w)\,;\, \lim_{t \to 0} \gamma(t) \cdot x\;\text{exists}\}
\end{equation}
while for $a<0<b$ and $|b/a| \gg 1$ we have 
\begin{equation}\label{E:propattract}
\pi(\frakM^{0}(\v,\w))=\{x \in \frakM_0(\v,\w)\,;\, \lim_{t \to 0} \gamma(t) \cdot x\;\text{exists}\}
\end{equation}

\smallskip

For each path $\sigma$ in $\bar Q$ and each tuple $(\bar x,p,q)\in\mu^{-1}(0)$,
let $\bar x_\sigma$ be the composition of $\bar x$ along $\sigma$. We'll view it as an element of 
$$\bar x_\sigma\in\k[\mu^{-1}(0)]\otimes \End(V).$$
By \cite[thm. ~1.3]{Lusztigonquiver}, \cite{LP} if $\text{char}(\kk)=0$, 
and \cite{Donkin} along with \emph{Crawley-Boevey's trick} in \cite{CB01} if $\text{char}(\kk) =p > 0$, 
the algebra $\k[\frakM_0(\v,\w)]$ is generated by the following two families of functions~: 
\begin{itemize}
\item[$(A_\sigma)$ :] the coefficients of the characteristic polynomials of $\bar x_{\sigma}$ 
for each oriented cycles $\sigma$ in $\bar Q$,
\item[$(B_\sigma)$ :] the functions $\phi(p\,\bar x_{\sigma}q)$ for each linear form $\phi$ on $\End(W)$
and each path $\sigma$ in $\bar Q$. 
\end{itemize}

\smallskip

The closed subset $\pi(\frakM_\gamma)$ of $\frakM_0(\v,\w)$ is the intersection of $\pi(\frakM(v,w))$ with the zero set of all
the functions in $(A_\sigma)$ or $(B_\sigma)$ which are of negative weight under the $\gamma$-action. 
Our aim is to determine precisely these sets under the assumption (c) (resp. (d)) on $\gamma$.

\smallskip

Let us first consider the situation in which $\gamma=(a,b)$ is as in case (c). Let $ z=(\bar x,p,q) \in \mu^{-1}(0)^s$ such that $[z] \in \frakM_\gamma$. We first claim that $q=0$. Indeed, otherwise, by the stability condition we have $p\, \bar{x}_{\sigma} q \neq 0$ for some
path $\sigma$ in $\bar Q$. But this defines a regular function on $\frakM_0(\v,\w)$ which is of negative weight. Next, let us prove that
$\bar x \in \Lambda^0_{\v}$. By our choice of $\gamma$, the characteristic polynomial of $\bar x_{\sigma}$ for any cycle in $\bar Q$ containing at least one arrow in $Q$ must be zero, and thus $\bar x_{\sigma}$ is nilpotent for any such cycle $\sigma$. In particular, $x_{\sigma}$ is nilpotent for any cycle $\sigma$ in $Q$. Let $A$ denote the image of the path algebra $\kk \bar Q$ under the natural evaluation map $\sigma \mapsto \bar x_{\sigma}$, and let $A^+ \subset A$ denote the unital subalgebra generated by all the paths
containing an arrow from $Q$. By the above, $A^+$ is a finite-dimensional algebra consisting of nilpotent endomorphisms hence by Wedderburn's theorem, $A^+$ is nilpotent. But then the flag of $I$-graded subspaces $K^{\bullet}$ defined by
$K^l= \Im ((A^+)^l)$ satisfies $V=K^0 \supset K^1 \supset \cdots \supset K^n=\{0\}$ for some large enough $n$, and
$$x_h (K^l) \subset L^{l+1}, \quad x_{h^*}(K^l) \subset K^l.$$
Therefore $\bar x$ is indeed semi-nilpotent, and $z \in \mu^{-1}(0)^s \cap (\Lambda^0_\v \times \bigoplus_i \text{Hom}(V_i,W_i))$.
We have shown that $\frakM_{\gamma} \subseteq \frakL^0(\v,\w)$. To prove the reverse inclusion, it is enough to notice that if
$z \in \mu^{-1}(0)^s \cap (\Lambda^0_\v \times \bigoplus_i \text{Hom}(V_i,W_i))$ then by the same argument as above, all the regular
functions of negative weight on $\frakM_0(\v,\w)$ vanish on $\pi([z])$, and thus $[z] \in \frakM_{\gamma}$. 

\smallskip

The argument in case (d) is very similar. Since $a<0$, for any oriented cycle $\sigma$ in $Q$ the characteristic polynomial of the monomial
$x_\sigma$ has a negative weight. Hence, it vanishes on $\pi(\frakM_\gamma)$. 
As above, we deduce that if $[\bar x,p,q]$ is a $\kk$-point of $\pi(\frakM_\gamma)$ then $x_\sigma$ is nilpotent for all such $\sigma$, 
hence $x$ is nilpotent.
Therefore, we have
$$\pi(\frakM_\gamma)\subseteq\pi(\frakM^0(\v,\w)).$$

\smallskip

It remains to prove the reverse inclusion if $b$ is large enough. 
Let $m=(\bar x, p, q)$ such that $[ m]$ belongs to $\frakM^{0}(\v,\w)$, i.e., the representation $x$ is nilpotent. 
We must prove that any function of type $(A_\sigma)$ or $(B_\sigma)$
which is of negative weight under the $\gamma$-action 
vanishes at $m$. We will prove this for functions of type $(A_\sigma)$ and 
leave the other case to the reader. 

\smallskip

Let $\sigma$ be an oriented cycle in $\bar{Q}$.
The weight of $\bar x_\sigma$ under the $\gamma$-action is of the form
$$a\,|\sigma\cap Q|+b\,|\sigma\cap Q^*|.$$
Assume that it is negative. Assume also that $b>2N|a|$ for some large enough positive integer $N$.
Then, we have $$|\sigma\cap Q|>2N\,|\sigma\cap Q^*|.$$
If $\sigma\cap Q^*=\emptyset$, then $\sigma$ is a path in $Q$, hence the characteristic polynomial of 
$\bar x_\sigma$ vanishes because the representation $x$ is nilpotent.
If $\sigma\cap Q^*\neq\emptyset$, then there is at least $N$ consecutive arrows in $\sigma\cap Q$.
Hence $\bar x_\sigma$ vanishes because the representation $x$ is nilpotent.
\end{proof}

\medskip

\subsubsection{The cases of $\frakL^{1}(\v,\w)$ and $\frakM^{1}(\v,\w)$.} This is very similar to the above, using a different torus action. The torus
$T'=\{(t_1,t_2,t_3,t_4)\in \mathbb{G}_m^4\;;\; t_1t_2=t_3t_4\}$ linearly acts on $\bar{E}_{\v}$ as follows~:
$$(t_1,t_2,t_3,t_4) \cdot x_h=\begin{cases} t_1x_h & \text{if}\; h' \neq h'',\, h \in \Omega\\ t_2x_h & \text{if}\; h' \neq h'', \,h \in \Omega^*\\ t_3x_h & \text{if}\; h' = h'',\, h \in \Omega\\ 
t_4x_h & \text{if} \;h' = h'',\, h \in \Omega^*.\end{cases}$$ 
Consider the action on $\frakM(\v,\w)$ defined as
$$(t_1,t_2,t_3,t_4) \cdot (\bar{x}, p, q)=((t_1,t_2,t_3,t_4) \cdot \bar{x}\,,\,p\,,\,t_1t_2q)$$
The group of cocharacters of $T'$ is identified with $\{ (u_1,u_2,u_3,u_4) \in \Z^4\;;\; u_1+u_2=u_3+u_4\}$. For any cocharacter $\gamma$
let $\frakM_{\gamma}$ be the attracting variety of the corresponding $\mathbb{G}_m$-action on $\frak{M}(\v,\w)$.

\smallskip

The following can be proved by the same type of arguments as in Proposition~\ref{P:attract}.

\smallskip

\begin{proposition}\label{P:attract2} Fix $\v,\w\in\bbN^I$ and $\gamma=(u_1,u_2,u_3,u_4)\in X_*(T')$.
If the field $\kk$ is large enough, then 
\hfill
\begin{itemize}[leftmargin=8mm]
\item[$\mathrm{(a)}$] $u_1,u_2,u_3<0, u_4=0\Rightarrow\frakM_{\gamma}=\frakL^{1}(\v,\w)$,
\item [$\mathrm{(b)}$] $u_1,u_2,u_3>0, u_4<0$ and $ |u_1/u_4|, |u_2/u_4|, |u_3/u_4| \gg 1 \Rightarrow\frakM_{\gamma}=\frakM^{*,1}(\v,\w)$.
\end{itemize}
\qed
\end{proposition}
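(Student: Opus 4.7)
The plan is to mimic the proof of Proposition~\ref{P:attract} almost verbatim, replacing the two-dimensional torus by $T'$ and redoing the weight bookkeeping. Since $\pi$ is projective and $T'$-equivariant, $\frakM_\gamma=\pi^{-1}(\pi(\frakM_\gamma))$ and $\pi(\frakM_\gamma)$ is cut out inside $\pi(\frakM(\v,\w))$ by the vanishing of the generators of $\k[\frakM_0(\v,\w)]$ of types $(A_\sigma)$ and $(B_\sigma)$ of strictly negative $\gamma$-weight. Under $T'$, $x_h$ for $h\in\Omega$ carries weight $u_1$ or $u_3$ according as $h$ is a non-loop or a loop, $x_{h^*}$ carries weight $u_2$ or $u_4$, and $q$ carries weight $u_1+u_2=u_3+u_4$. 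So a cycle $\sigma$ in $\bar Q$ with $a$ non-loop $\Omega$-arrows, $b$ non-loop $\Omega^*$-arrows, $c$ $\Omega$-loops and $d$ $\Omega^*$-loops has $\bar x_\sigma$ of weight $u_1a+u_2b+u_3c+u_4d$, which controls the weights of the corresponding $(A_\sigma)$ and $(B_\sigma)$.

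For (a), every arrow weight is $\leqslant 0$ with only $u_4=0$, and $u_1+u_2<0$. All $(B_\sigma)$ therefore vanish on $\pi(\frakM_\gamma)$, forcing $q=0$ via the stability argument of Proposition~\ref{P:attract}(c); likewise $\bar x_\sigma$ must be nilpotent for every cycle $\sigma$ using at least one non-$\Omega^*$-loop arrow. As in loc.~cit., the decreasing filtration $U^k=\sum_\sigma\bar x_\sigma(V)$, indexed over paths containing at least $k$ non-$\Omega^*$-loop arrows, stabilizes at $0$; reversing it produces an $I$-graded flag $L^\bullet$ with $x_h(L^l)\subseteq L^{l-1}$ for every non-$\Omega^*$-loop arrow $h$ and $x_{h^*}(L^l)\subseteq L^l$ for every $\Omega^*$-loop $h^*$. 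To obtain the \emph{restricted} flag demanded by strong semi-nilpotency, I would refine each step $L^{l-1}\subset L^l$ into single-vertex jumps by inserting $L^{l-1}+\bigoplus_{j\leqslant k}(L^l)_{i_j}$ for an arbitrary ordering $i_1,\dots,i_{|I|}$ of $I$. Non-loop $\Omega^*$-arrows already drop $L^l$ into $L^{l-1}$, and $\Omega^*$-loops act only on a single vertex component, so the refined flag is still preserved. The reverse inclusion $\frakL^1(\v,\w)\subseteq\frakM_\gamma$ follows from the dual observation that for any restricted flag, every non-loop $x_{h^*}$ induces the zero map on each associated graded $L^l/L^{l-1}$ (which is supported at one vertex while $h'\neq h''$), so every cycle using at least one non-$\Omega^*$-loop arrow drops the flag and is thus nilpotent.

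For (b), the only arrow of negative weight is $\Omega^*$-loops. Taking $a=b=c=0$ forces every nontrivial word in $\Omega^*$-loops at a single imaginary vertex to have nilpotent characteristic polynomial; by Lemma~\ref{L:engel} this is equivalent to $x^*$ being $1$-nilpotent, of some uniform depth $N$ depending only on $\v$. Conversely, if $x^*$ is $1$-nilpotent of depth $\leqslant N$ and we choose $|u_j/u_4|>2N$ for $j=1,2,3$, any cycle $\sigma$ of negative $\gamma$-weight with $a+b+c\geqslant 1$ satisfies $d>2N(a+b+c)$; by pigeonhole on the $a+b+c$ gaps between non-loop arrows, $\sigma$ contains a run of more than $N$ consecutive $\Omega^*$-loops, necessarily at a single vertex, whose product vanishes, forcing $\bar x_\sigma=0$. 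The $(B_\sigma)$ terms are handled identically, the extra positive weight $u_1+u_2$ contributed by $q$ being absorbed into the same pigeonhole estimate.

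The most delicate point is the refinement step in (a): one must simultaneously check that the vertex-by-vertex refinement really produces a restricted flag preserved by every $x_{h^*}$ with $h\in\Omega$ (including all $\Omega^*$-loops), and that strong semi-nilpotency conversely forces each non-loop $x_{h^*}$ to act trivially on the associated graded. Both directions hinge on the sharp observation that on a restricted flag, each graded piece $L^l/L^{l-1}$ lives on a single vertex, so any arrow that changes vertex components is automatically killed in the quotient.
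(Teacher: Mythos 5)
Your proposal is correct and is exactly the argument the authors have in mind: the paper asserts the proposition with a \qed and the remark that it follows from the same arguments as Proposition~\ref{P:attract}, and your proof does precisely that, with the correct $T'$-weight bookkeeping (non-loop $\Omega$, non-loop $\Omega^*$, $\Omega$-loop, $\Omega^*$-loop carry weights $u_1,u_2,u_3,u_4$ and $q$ carries $u_1+u_2$). The reduction to $\pi(\frakM_\gamma)$ via projectivity, the vanishing of the negative-weight generators $(A_\sigma)$, $(B_\sigma)$, the stability argument forcing $q=0$ in (a), the nil-ideal/Wedderburn step producing the coarse $I$-graded flag, and the pigeonhole estimate for (b) (with the extra $u_1+u_2>0$ from $q$ absorbed harmlessly) all track Proposition~\ref{P:attract}(c),(d).

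The one genuinely new ingredient relative to Proposition~\ref{P:attract} — that strong semi-nilpotency requires a \emph{restricted} flag — is the step you correctly single out and handle. Your vertex-by-vertex refinement $M^{(l,k)}=L^{l-1}+\bigoplus_{j\leqslant k}(L^l)_{i_j}$ works: each graded piece $M^{(l,k)}/M^{(l,k-1)}$ is a quotient of $(L^l)_{i_k}$ hence concentrated at $i_k$; every $x_h$ with $h\in\Omega$ and every non-loop $x_{h^*}$ already send $L^l$ into $L^{l-1}=M^{(l,0)}\subseteq M^{(l,k-1)}$; and an $\Omega^*$-loop $x_{h^*}$ at vertex $i$ preserves $L^{l-1}$ and maps $(L^l)_{i_j}$ to $0$ if $i_j\neq i$ and into $(L^l)_{i}$ if $i_j=i$, hence preserves each $M^{(l,k)}$. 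The converse observation — that on a restricted flag any non-loop $x_{h^*}$ is forced to drop, because source and target vertices cannot both coincide with the single support vertex of $L^l/L^{l-1}$ — is likewise correct and gives the reverse inclusion in (a).
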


\medskip

\subsection{More on the Bialynicki-Birula decompositions of $\frakM(\v,\w)$}\label{sec:3.3}\hfill\\

We now draw some consequences of the Bialynicki-Birula decompositions considered in the previous sections. Again, we
work out everything in details for $\frak{L}^0(\v,\w), $ $\frak{L}(\v,\w)$ and state the analogous results for $\frakL^{1}(\v,\w)$. 

\smallskip

\subsubsection{The cases of $\frakL^0(\v,\w),$ $ \frakL(\v,\w)$.} 
Fix $\v,$ $\w$ in $\N^I$ and fix a cocharacter $\gamma=(a,b)$  of $T$.
Assume that either the field $\kk$ is algebraically closed or that it is finite with a large enough number of elements.
Consider the $\bbG_m$-action on the $\kk$-variety $\frakM_\gamma$
associated with the cocharacter $\gamma$.
The Bialynicki-Birula decomposition gives a partition
$$\frakM_\gamma=\bigsqcup_{\kappa \in \chi_{\gamma}}\frakM_{\gamma,\kappa} $$
into locally closed $\kk$-subvarieties $\frakM_{\gamma,\kappa}$ and affine space bundles
$p_{\gamma,\kappa}~: \frakM_{\gamma,\kappa} \to F_{\gamma,\kappa}$ such that
$$\frakM(\v,\w)^\gamma=\bigsqcup_{\kappa \in \chi_{\gamma}}F_{\gamma,\kappa}$$ 
is the decomposition of the $\gamma$-fixed point set of $\frakM(\v,\w)$
into a disjoint union of smooth connected
closed subvarieties.

\smallskip

In order to relate the dimension of the affine space bundle 
$p_{\gamma,\kappa}$ for various $\gamma$ and $\kappa$, we consider the restriction of $p_{\gamma,\kappa}$ to the set of $T$-fixed points. 
Each $F_{\gamma,\kappa}$ is $T$-stable. We deduce that we have
$$\bigsqcup_{\rho \in \chi} F_{\rho}=\frakM(\v,\w)^T = \bigsqcup_{\kappa \in \chi_{\gamma}} (F_{\gamma,\kappa})^T.$$
This provides us with a map $\pi_{\gamma}:\chi \to \chi_{\gamma}$ such that we have
$$(F_{\gamma,\kappa})^T=\bigsqcup_{\rho \in \pi_{\gamma}^{-1}(\kappa)} F_{\rho}.$$
We claim that the map $\pi_\gamma$ is surjective. 
Indeed, by Proposition~\ref{P:attract}(a) there exists a generic cocharacter $\sigma \in X_*(T)$ 
which acts on $\frakM(\v,\w)$ in a contracting way. 
Therefore, every $T$-orbit in $\frakM(\v,\w)$ contains a $\sigma$-fixed point in its closure, and, by genericity of $\sigma$, 
a $\sigma$-fixed point is indeed fixed by $T$.
Since each $F_{\gamma,\kappa}$ is closed and $T$-invariant, it follows that each $F_{\gamma,\kappa}$ contains a $T$-fixed point.

\smallskip

Let $\Delta(\bfv,\bfw)$ be the set of roots of the $T$-action on $\frakM(\bfv,\bfw)$. It is a finite set.
Pick a point $z_{\rho} \in F_{\rho}$ for each $\rho$. Decompose the tangent space $ T_{\rho}$ of $\frakM(\v,\w)$ at $z_\rho$ 
as a sum of $T$-weight spaces.
We have
\begin{equation}\label{E:tcharac}
T_{\rho} = \bigoplus_{\alpha\in\Delta(\v,\w)}  T_{\rho} [\alpha],
\end{equation}
where the torus $T$ acts on $T_{\rho} [\alpha]$ via the character $\alpha$. 
The multiplicity $\dim(T_{\rho} [\alpha])$ does not depend on the choice of the element $z_\rho$ in $F_\rho$, because $F_\rho$ is 
connected.

\smallskip

Let $\Delta(\v,\w)_\gamma,$ $\Delta^+(\v,\w)_\gamma$ and $\Delta^-(\v,\w)_\gamma$ be the set of roots in $\Delta(\v,\w)$ given by
\begin{align*}
\Delta(\v,\w)_{\gamma}=\{\alpha \,;\, \gamma \cdot \alpha=0\}, \quad
\Delta^+(\v,\w)_{\gamma}=\{\alpha \,;\, \gamma \cdot \alpha>0\}, \quad
\Delta^-(\v,\w)_{\gamma}=\{\alpha \,;\, \gamma \cdot \alpha<0\}.
\end{align*}
For any $\kappa \in \chi_{\gamma}$ and $\rho \in \chi$ such that $\pi_{\gamma}(\rho)=\kappa$ we have
\begin{equation}\label{E:dimfix}
\dim(F_{\gamma,\kappa}) = \sum_{\alpha \in \Delta(\v,\w)_{\gamma}} \dim(T_{\rho} [\alpha]), \quad 
\dim( p_{\kappa,\gamma})= \sum_{\alpha \in \Delta^+(\v,\w)_{\gamma} }\dim(T_{\rho} [\alpha]).
\end{equation}

\smallskip

Since the $T$-action on $\frakM(\v,\w)$ scales the symplectic form $\omega$ by the character $\omega:=(1,1)$, 
the form $\omega$ restricts to a nondegenerate bilinear form $$\omega :T_{\rho}[\alpha] \times T_{\rho}[\omega-\alpha] \to \kk.$$
In particular, we have the following formula for each $\rho$ and $\alpha$
\begin{equation}\label{E:duality}
\dim(T_{\rho}[\alpha]) = \dim(T_{\rho}[\omega-\alpha]).
\end{equation}

\smallskip

 Recall that the cocharacter $\gamma$ is generic if the set $\Delta(\v,\w)_{\gamma}$ is empty.
 If $\gamma$ is generic, then we have 
 $\frakM(\v,\w)^{\gamma}=\frakM(\v,\w)^T$ and the map $\pi_{\gamma}$ is a bijection $\chi\to\chi_{\gamma}$
 such that $F_{\gamma,\pi_\gamma(\rho)}=F_{\rho}.$
 
\smallskip

Let us now examine the equation (\ref{E:dimfix}) in each of the cases occuring in Proposition~\ref{P:attract}.

\smallskip

\begin{itemize}
\item[(a)] Choose $a,b>0$ generic. Then for any $\rho \in \chi_{\gamma}=\chi$ we have
\begin{align}\label{E:case1}
\begin{split}
&\frakM_\gamma=\frakM(\v,\w),\quad
\frakM_{-\gamma}=\frakL(\v,\w),\\
\dim(p_{\gamma,\rho})& + \dim(p_{-\gamma, \rho}) + \dim(F_{\rho})= \dim(\frakM(\v,\w)).
\end{split}
\end{align}

\smallskip

\item[(b)] Choose $a<0=b$. Thus $\gamma$ may not be generic. Then, for any
$\kappa \in \chi_{\gamma}$ and $\rho \in \pi_{\gamma}^{-1}(\kappa)$
we have
\begin{equation}\label{E:case2}
\frakM_\gamma=\frakL^0(\v,\w),\quad \dim(p_{\gamma,\kappa})=\sum_{k<0}\sum_{l \in \Z} \dim(T_{\rho}[k,l]).
\end{equation}
Dually, if $b<0=a$, then for any
$\kappa \in \chi_{\gamma}$ and $\rho \in \pi_{\gamma}^{-1}(\kappa)$
we have
\begin{equation}\label{E:case2new}
\frakM_\gamma=\frakL^{*,0}(\v,\w),\quad
\dim(p_{\gamma,\kappa})=\sum_{k \in \Z}\sum_{l<0} \dim(T_{\rho}[k,l]).
\end{equation}
\smallskip

\item[(c)] Choose $a<0<b$ generic with $|b|/|a| \gg 1$. Then for any $\rho \in\chi_\gamma=\chi$ we have
\begin{equation}\label{E:case3}
\frakM_\gamma=\frakM^0(\v,\w),\quad
\dim(p_{\gamma,\rho})=\sum_{k\in \Z}\sum_{l >0} \dim(T_{\rho}[k,l]) + \sum_{k <0} \dim(T_{\rho}[k,0]) .
\end{equation}
Dually, if $a>0>b$ generic with $|a|/|b| \gg 1$
then we have
\begin{equation}\label{E:case3new}
\frakM_\gamma=\frakM^{*,0}(\v,\w),\quad
\dim(p_{\gamma,\rho})=\sum_{k >0}\sum_{l \in \Z} \dim(T_{\rho}[k,l]) + \sum_{l <0} \dim(T_{\rho}[0,l]) .
\end{equation}
\end{itemize}

\smallskip

By an \emph{almost affine space bundle} we'll mean a map which is a composition of affine space bundles.
We can now prove the following.

\smallskip

\begin{proposition}\label{P:fibrations}
\hfill
\begin{itemize}[leftmargin=8mm]
\item[$\mathrm{(a)}$] 
There exists partitions into locally closed $\kk$-subvarieties
$$\frakM(\v,\w)=\bigsqcup_{\rho \in \chi} Z_{\rho},\quad \frakL(\v,\w)= \bigsqcup_{\rho \in \chi} Y_{\rho}$$
and affine space bundles $u_{\rho} : Z_{\rho} \to F_{\rho}$ and $ v_{\rho}: Y_{\rho} \to F_{\rho}$ such that
\begin{equation}\label{E:eqdim1}
\dim(u_{\rho}) + \dim(v_{\rho}) +\dim(F_{\rho}) = \dim(\frakM(\v,\w)).
\end{equation}

\item[$\mathrm{(b)}$] There exists partitions into locally closed $\kk$-subvarieties
$$\frakM^{*,0}(\v,\w)=\bigsqcup_{\rho \in \chi} Z_{\rho},\quad \frakL^0(\v,\w)= \bigsqcup_{\rho \in \chi} Y_{\rho}$$
and almost affine space bundles $u_{\rho} : Z_{\rho} \to F_{\rho}$ and $ v_{\rho}: Y_{\rho} \to F_{\rho}$ such that
\begin{equation}\label{E:eqdim2}
\dim(u_{\rho}) + \dim(v_{\rho}) +\dim(F_{\rho}) = \dim(\frakM(\v,\w)).
\end{equation}
\end{itemize}
\end{proposition}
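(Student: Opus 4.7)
The plan is to realize both partitions as (iterated) Bialynicki-Birula-Hesselink decompositions associated to carefully chosen cocharacters of $T=(\bbG_m)^2$, exploiting the identifications established in Proposition~\ref{P:attract}.

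For part \textbf{(a)}, I would choose a generic cocharacter $\gamma=(a,b)$ with $a,b>0$. Then Proposition~\ref{P:attract}(a,b) yields $\frakM_\gamma=\frakM(\v,\w)$ and $\frakM_{-\gamma}=\frakL(\v,\w)$, while by genericity $\frakM(\v,\w)^\gamma=\frakM(\v,\w)^T=\bigsqcup_\rho F_\rho$. The Bialynicki-Birula-Hesselink decompositions for $\pm\gamma$ then produce the desired partitions $\frakM(\v,\w)=\bigsqcup_\rho Z_\rho$ and $\frakL(\v,\w)=\bigsqcup_\rho Y_\rho$ together with affine space bundles $u_\rho=p_{\gamma,\rho}$ and $v_\rho=p_{-\gamma,\rho}$ onto the same $F_\rho$. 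The dimension identity \eqref{E:eqdim1} is \eqref{E:dimfix} for $\pm\gamma$: the positive roots of $\gamma$ and of $-\gamma$ together partition $\Delta(\v,\w)$, so $\dim u_\rho+\dim v_\rho=\dim\frakM(\v,\w)-\dim F_\rho$.

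For part \textbf{(b)}, the $\frakM^{*,0}$ side is analogous: a generic cocharacter $\gamma=(a,b)$ with $b<0<a$ and $|a/b|\gg 1$ satisfies $\frakM_\gamma=\frakM^{*,0}(\v,\w)$ by Proposition~\ref{P:attract}(d), hence a single BBH step gives an honest affine space bundle $Z_\rho\to F_\rho$. The $\frakL^0$ side requires an iteration, because Proposition~\ref{P:attract}(c) forces the non-generic cocharacter $\gamma_1=(-1,0)$. First apply BBH for $\gamma_1$, obtaining $\frakL^0=\bigsqcup_\kappa X_\kappa$ with $X_\kappa\to F_{\gamma_1,\kappa}$ affine. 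On each closed smooth subvariety $F_{\gamma_1,\kappa}$ the first $\bbG_m$ acts trivially, so the residual $T$-action is through the second factor; by Proposition~\ref{P:attract}(a) the cocharacter $(1,1)$ is contracting on $\frakM(\v,\w)$, and its restriction to $F_{\gamma_1,\kappa}$ coincides with $\gamma_2=(0,1)$, which is therefore contracting and, since the remaining roots are of the form $(0,k)$ with $k\neq 0$, also generic. A second BBH decomposition partitions $F_{\gamma_1,\kappa}=\bigsqcup_{\rho\in\pi_{\gamma_1}^{-1}(\kappa)}F_{\gamma_1,\kappa,\rho}$ with affine bundles onto $F_\rho$; taking $Y_\rho$ to be the preimage of $F_{\gamma_1,\pi_{\gamma_1}(\rho),\rho}$ in $X_{\pi_{\gamma_1}(\rho)}$ gives the sought-for almost affine space bundle $Y_\rho\to F_\rho$.

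The dimension identity \eqref{E:eqdim2} then follows by adding the two formulas from \eqref{E:dimfix}: $\dim u_\rho$ counts the roots $\alpha=(a',b')\in\Delta(\v,\w)$ with $a'>0$ or with $a'=0$ and $b'<0$, while $\dim v_\rho$ counts the complementary ones, namely those with $a'<0$ or with $a'=0$ and $b'>0$. These two subsets exhaust $\Delta(\v,\w)$, yielding $\dim u_\rho+\dim v_\rho+\dim F_\rho=\dim\frakM(\v,\w)$. The main technical subtlety I anticipate is the iterated step in (b): one must verify that $F_{\gamma_1,\kappa}$ inherits a suitable equivariant embedding in a projective space so that Hesselink's theorem applies to it, and that $\gamma_2=(0,1)$ is contracting on every point of $F_{\gamma_1,\kappa}$. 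Both follow from the projective equivariant morphism $\pi:\frakM(\v,\w)\to\frakM_0(\v,\w)$ with affine target, combined with the triviality of the $\gamma_1$-action on $F_{\gamma_1,\kappa}$.
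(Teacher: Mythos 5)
Your proposal is correct and follows essentially the same route as the paper's own proof: for (a) a generic cocharacter $(a,b)$ with $a,b>0$ and its negation, for (b) the generic cocharacter in case (d) of Proposition~\ref{P:attract} on the $\frakM^{*,0}$ side and a two-step Bialynicki--Birula--Hesselink decomposition with $\gamma_1=(-1,0)$ followed by a contracting cocharacter on the $\frakL^0$ side, with the dimension bookkeeping done via \eqref{E:dimfix} and \eqref{E:duality}. The only cosmetic difference is your explicit choice of $\gamma_2=(0,1)$ for the second step where the paper uses a generic $(a',b')$ with $a',b'>0$; since $\gamma_1(\bbG_m)$ acts trivially on each $F_{\gamma_1,\kappa}$, these induce the same $\bbG_m$-action and produce the same decomposition, so the two presentations are interchangeable.
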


\smallskip

\begin{proof} To prove the claim (a), we choose $a,b$ as in \eqref{E:case1} and set
$$Z_{\rho}=\frakM_{\gamma,\,\rho},\quad u_{\rho}=
p_{\gamma,\,\rho},\quad Y_{\rho}=\frakM_{-\gamma,\,\rho},\quad v_{\rho}=p_{-\gamma,\,\rho}.$$

\smallskip

Let us turn to the claim (b). Given $a,b$ as in \eqref{E:case3}, we put
$$Z_{\rho}=\frakM_{\gamma,\,\rho},\quad u_{\rho}=p_{\gamma,\,\rho}.$$
To construct the partition of $\frakL^0(\v,\w)$ we proceed in two steps. For $\gamma=(-1,0)$ we consider the partition
$$\frakL^0(\v,\w)=\bigsqcup_{\kappa} \frakM_{\gamma,\kappa}$$ and the affine space bundles 
$p_{\gamma,\kappa}~: \frakM_{\gamma,\kappa} \to F_{\gamma,\kappa}$. 
Now pick some generic $a' , b'>0$. The $\gamma'$-action on $\frakM(\v,\w)$ is contracting. 
Since $F_{\gamma,\kappa}$ is smooth and 
$T$-stable, the Bialynicki-Birula decomposition gives us with a partition
$$F_{\gamma,\kappa}= \bigsqcup_{\rho \in \pi_{\gamma}^{-1}(\kappa)} U_{\rho,\kappa}$$
and affine space bundles $q_{\rho,\kappa} : U_{\rho,\kappa} \to F_{\rho}$ such that we have
\begin{equation}\label{E:dim4}
\dim(q_{\rho,\kappa})=\sum_{\alpha\in \Delta(\v,\w)_{\gamma} \cap \Delta^+(\v,\w)_{\gamma'}} \hspace{-.1in} 
\dim(T_{\rho}[\alpha]) = \sum_{l>0} \dim(T_{\rho}[0,l]).
\end{equation}

Now, for each $\rho\in\chi$ we set $\kappa=\pi_{\gamma}(\rho)$ and we define 
$$Y_{\rho}=p_{\gamma,\kappa}^{-1} (U_{\rho, \kappa}),\quad 
v_{\rho}=q_{\rho,\kappa} \circ p_{\gamma, \kappa}\,:\,Y_{\rho} \to F_{\rho}.$$
Using \eqref{E:case2}, \eqref{E:case3new} and \eqref{E:dim4} we compute
\begin{equation*}
\begin{split}
\dim(u_{\rho}) + \dim(v_{\rho}) + \dim(F_{\rho})
&= \dim(u_{\rho}) + \dim(p_{\gamma,\kappa}) + \dim(q_{\rho, \kappa}) + \dim(F_{\rho})\\
&=\sum_{k >0}\sum_{l \in \Z} \dim(T_{\rho}[k,l]) + \sum_{l <0} \dim(T_{\rho}[0,l]) + \sum_{k<0}\sum_{l \in \Z} \dim(T_{\rho}[k,l]) +\\
&\quad+\sum_{l >0} \dim(T_{\rho}[0,l]) + \dim(T_{\rho}[0,0])\\
&=\dim(T_{\rho})\\
&=\dim(\frakM(\v,\w))
\end{split}
\end{equation*}
as wanted.
\end{proof}

\medskip

\subsubsection{The cases of $\frakL^{1}(\v,\w)$.} Fix again some $\v,\w$ and assume that the
ground field $\kk$ is algebraically closed or that it is finite with a large enough number of elements.
The same arguments as above, applied to the decompositions
resulting from the $T'$-actions described in \S 3.3.2 yields the following result. Let us denote by
$$\frakM(\v,\w)^{T'}=\bigsqcup_{\rho \in \chi'} F_{\rho}$$
the decomposition of the $T'$-fixed point subvariety of $\frakM(\v,\w)$ into connected components. Each
$F_{\rho}$ is smooth and projective as $\frakM(\v,\w)^{T'} \subset \pi^{-1}(0)$.

\smallskip

\begin{proposition}
There exists partitions into locally closed $\kk$-subvarieties
$$\frakM^{*, 1}(\v,\w)=\bigsqcup_{\rho \in \chi'} Z_{\rho},\quad \frakL^{1}(\v,\w)= \bigsqcup_{\rho \in \chi'} Y_{\rho}$$
and affine space bundles $u_{\rho} : Z_{\rho} \to F_{\rho}$ and $ v_{\rho}: Y_{\rho} \to F_{\rho}$ such that
\begin{equation}\label{E:eqdim1new}
\dim(u_{\rho}) + \dim(v_{\rho}) +\dim(F_{\rho}) = \dim(\frakM(\v,\w)).
\end{equation}
\end{proposition}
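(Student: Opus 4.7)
The plan is to imitate the proof of Proposition~\ref{P:fibrations}, replacing the $T$-action by the $T'$-action of Section~3.3.2. Both partitions will come from Bialynicki-Birula decompositions for cocharacters of $T'$, and the dimension identity will reduce to a weight-counting claim on the tangent space at each $T'$-fixed point.

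For the partition of $\frakM^{*,1}(\v,\w)$, I would pick a cocharacter $\gamma \in X_*(T')$ in the regime of Proposition~\ref{P:attract2}(b), so that $\frakM_\gamma = \frakM^{*,1}(\v,\w)$. Since the set of $T'$-weights arising in the tangent spaces to $\frakM(\v,\w)$ at the fixed locus $\frakM(\v,\w)^{T'}$ is finite, a generic such $\gamma$ satisfies $\chi'_\gamma = \chi'$, and the BB decomposition directly yields affine space bundles $u_\rho = p_{\gamma,\rho}\colon Z_\rho = \frakM_{\gamma,\rho} \to F_\rho$. For $\frakL^{1}(\v,\w)$ the analogous cocharacter $\gamma''$ lies in the regime of Proposition~\ref{P:attract2}(a) and is forced to satisfy $u_4 = 0$; when $Q$ has a $1$-loop, the root $(1,1,-1,0)$ produced by $x_{h^*}$ for a loop $h$ pairs to zero with every such $\gamma''$, so $\gamma''$ is never generic. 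I would therefore follow the two-step procedure of the proof of Proposition~\ref{P:fibrations}(b): the BB decomposition for $\gamma''$ gives $\frakL^{1}(\v,\w) = \bigsqcup_\kappa \frakM_{\gamma'',\kappa}$ with bundles $p_{\gamma'',\kappa}\colon \frakM_{\gamma'',\kappa} \to F_{\gamma'',\kappa}$; a second BB decomposition on each smooth $T'$-stable $F_{\gamma'',\kappa} \subseteq \pi^{-1}(0)$ (projective since $\pi^{-1}(0)=\frakL(\v,\w)$ is), for a cocharacter $\gamma'''$ contracting $\frakM(\v,\w)$ and generic on $\ker\gamma''$, refines the base into pieces $U_{\rho,\kappa}$ with bundles $q_{\rho,\kappa}\colon U_{\rho,\kappa} \to F_\rho$. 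Setting $Y_\rho = p_{\gamma'',\kappa}^{-1}(U_{\rho,\kappa})$ and $v_\rho = q_{\rho,\kappa}\circ p_{\gamma'',\kappa}$ produces the required composition of affine space bundles.

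The crux is the dimension identity \eqref{E:eqdim1new}. It reduces, via the analogue of \eqref{E:dimfix} for the $T'$-action, to the combinatorial claim that for each nonzero $T'$-weight $\alpha$ on $T_\rho$ exactly one of the conditions
\begin{equation*}
\gamma\cdot\alpha>0, \qquad \gamma''\cdot\alpha>0, \qquad \gamma''\cdot\alpha=0 \text{ and } \gamma'''\cdot\alpha>0
\end{equation*}
holds. A clean way to ensure this is to take $\gamma = -N\gamma'' + \delta$ for large $N$ and a small perturbation $\delta\in X_*(T')$ with fourth coordinate $\delta_4<0$ (so $\gamma$ sits in case (b)), together with $\gamma''' = -\delta$ plus a generic perturbation on the codimension-two subspace $\ker\gamma''\cap\ker\delta$. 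For $N$ large, $\gamma$ has sign opposite to $\gamma''$ off $\ker\gamma''$, while on $\ker\gamma''$ the complementary roles are played by $\delta$ and its negative $\gamma'''$. Summing $\dim T_\rho[\alpha]$ over the resulting partition of weights gives $\dim(u_\rho) + \dim(v_\rho) + \dim(F_\rho) = \dim(T_\rho) = \dim\frakM(\v,\w)$.

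The main technical obstacle is the combinatorial choice of the triple $(\gamma,\gamma'',\gamma''')$ yielding the above partition. Unlike the two-dimensional torus $T$, where the sign dichotomy of a single coordinate is transparent, the three-dimensional $T'$ and the constrained location of case (a) cocharacters on the hyperplane $\{u_4=0\}$ require careful (but generic) perturbations; the finiteness of the set of $T'$-roots appearing in the tangent spaces ensures that such a triple always exists.
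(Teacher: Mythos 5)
Your overall strategy is the intended one: the paper's proof is not written out but the phrase ``the same arguments as above'' refers exactly to the two-step Bialynicki-Birula construction from Proposition~\ref{P:fibrations}(b), now for $T'$-cocharacters, and your identification of the root $(1,1,-1,0)\equiv(0,0,0,1)$ (from $x_{h^*}$ on a loop) as the obstruction to genericity of the case-(a) cocharacter is correct. However, two technical steps in your construction need repair.

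First, the parenthetical ``$F_{\gamma'',\kappa}\subseteq\pi^{-1}(0)$, hence projective'' is false. For $\gamma''$ in regime (a), $\gamma''$ has $u_4=0$, so the functions $\Tr(\bar x_\sigma)$ for $\sigma$ a product of $\Omega^*$-loops have weight zero; thus $\frakM_0(\v,\w)^{\gamma''}$ need not be $\{0\}$ and the components $F_{\gamma'',\kappa}$ of $\frakM(\v,\w)^{\gamma''}$ need not lie in $\pi^{-1}(0)$. Already for the Jordan quiver with $\v=\w=1$, the fixed locus $\frakM(1,1)^{\gamma''}$ is the affine line $\{x=0\}$ (parametrized by $x^*$), while $\pi^{-1}(0)=\{0\}$. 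You do not need projectivity --- and indeed the paper's proof of Proposition~\ref{P:fibrations}(b) does not invoke it --- but you then must genuinely rely on the alternative justification you mention, namely that $\gamma'''$ is \emph{contracting} on $\frakM(\v,\w)$, so that the attracting locus of $F_{\gamma'',\kappa}$ under $\gamma'''$ is all of $F_{\gamma'',\kappa}$.

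Second, your specific choice $\gamma'''=-\delta$ (plus a perturbation) is not guaranteed to be contracting. The only constraint you impose on $\delta$ is ``small with $\delta_4<0$.'' Then $-\delta$ has fourth coordinate $>0$, but the first three coordinates of $-\delta$ may well be negative, e.g.\ $\delta=\eps(1,1,3,-1)$ is admissible and gives $-\delta=\eps(-1,-1,-3,1)$, which is not contracting; in that case $\Tr(x_h)$ for an $\Omega$-loop $h$ has negative $\gamma'''$-weight, $\frakM_{\gamma'''}\subsetneq\frakM(\v,\w)$, the BB decomposition of $F_{\gamma'',\kappa}$ does not cover it, and the claimed partition of $\frakL^{1}(\v,\w)$ fails. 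The repair is easy: insist that $\delta$ has all four coordinates $\leqslant 0$ (e.g.\ $\delta=-\eps(1,1,1,1)$), and choose $\delta$ generically enough that $\ker\gamma''\cap\ker\delta$ misses the finite set of nonzero $T'$-roots, so that $\gamma=-N\gamma''+\delta$ is generic (otherwise, a root $\alpha\in\ker\gamma''\cap\ker\delta$ has $\gamma\cdot\alpha=0$, and moreover is potentially counted by none of the three conditions in your weight partition). With those amendments the weight-counting argument goes through as you describe. Finally, note that $v_\rho$ is then a composition of two affine space bundles, i.e.\ an almost affine space bundle in the paper's terminology --- this is all that is needed for the point-counting application in Proposition~\ref{P:ply}.
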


\smallskip

\subsubsection{Irreducible components of $\frakL^0(\v,\w)$ and $\frakL^1(\v,\w)$.} The following simple corollary of (\ref{E:duality})
provides a geometric description and parametrization of the irreducible components of $\frakL^0(\v,\w)$ and $\frakL^1(\v,\w)$.

\smallskip

\begin{proposition}\label{P:lag}
\hfill
\begin{itemize}[leftmargin=8mm]
\item[$\mathrm{(a)}$] 
The variety $\frakL^0(\v,\w)$ is Lagrangian in $\frakM(\v,\w)$. Fix $\gamma=(a,b)$ with $a <0=b$. For any
$\kappa \in \chi_{\gamma}$, the smooth variety $\frakM_{\gamma,\kappa}$ is pure dimensional, and
$$\dim(\frakM_{\gamma,\kappa})=\dim(\frakL^0(\v,\w))=\frac{1}{2}\dim(\frakM(\v,\w)).$$
In particular, the irreducible components of $\frakL^0(\v,\w)$ are precisely the (Zariski closures) of the inverse images under the affine space bundles $p_{\gamma,\kappa}$ of the connected components of $F_{\gamma,\kappa}$ for $\kappa \in \chi_{\gamma}$.

\item[$\mathrm{(b)}$] 
The variety $\frakL^1(\v,\w)$ is Lagrangian in $\frakM(\v,\w)$ and is a closed subvariety in $\frakL^0(\v,\w)$.
In particular, the irreducible components of $\frakL^1(\v,\w)$ are the (Zariski closures) of the inverse images under the affine space bundles $p_{\gamma,\kappa}$ of certain smooth varieties $F_{\gamma,\kappa}$ for $\kappa \in \chi_{\gamma}$.
\end{itemize}
\end{proposition}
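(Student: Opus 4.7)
The plan for part (a) is to identify $\frakL^0(\v,\w)$ with the attracting variety $\frakM_\gamma$ for $\gamma=(a,0)$ with $a<0$, via Proposition~\ref{P:attract}(c), and then exploit the Bialynicki-Birula stratification $\frakM_\gamma=\bigsqcup_{\kappa\in\chi_\gamma}\frakM_{\gamma,\kappa}$ with its affine space bundles $p_{\gamma,\kappa}\colon\frakM_{\gamma,\kappa}\to F_{\gamma,\kappa}$ onto the smooth connected fixed-point components established in \S\ref{sec:3.3}. Each stratum is smooth, being an affine bundle over the smooth $F_{\gamma,\kappa}$. The decisive point is to compute $\dim\frakM_{\gamma,\kappa}$, and I would do this by choosing any $\rho\in\pi_\gamma^{-1}(\kappa)$ (which exists by the surjectivity of $\pi_\gamma$ noted in \S\ref{sec:3.3}), applying (\ref{E:dimfix}) to get $\dim\frakM_{\gamma,\kappa}=\sum_{k\leqslant 0,\,l}\dim T_\rho[(k,l)]$, and then invoking the symplectic duality (\ref{E:duality}) with $\omega=(1,1)$: the bijection $(k,l)\mapsto(1-k,1-l)$ matches $\{k\leqslant 0\}$ with $\{k\geqslant 1\}$, so these two equal sums together exhaust $\dim T_\rho=\dim\frakM(\v,\w)$ and each therefore equals $\tfrac12\dim\frakM(\v,\w)$. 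Since every stratum is smooth of this common half-dimension, $\frakL^0(\v,\w)$ is pure-dimensional, and combined with the Lagrangian property stated just after the definition of $\frakL^0$ (proved as for $\frakL^1$ in \cite[thm~1.15]{Bozec}), we obtain the Lagrangian assertion. The description of irreducible components then follows: the preimage under $p_{\gamma,\kappa}$ of each connected component of $F_{\gamma,\kappa}$ is a smooth irreducible locally closed subvariety of $\frakL^0(\v,\w)$ of top dimension, so its Zariski closure is an irreducible component, and conversely every irreducible component arises this way since its dense open part meets a unique stratum.

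For part (b), I would start from the Lagrangian property of $\frakL^1(\v,\w)$ (from \cite[thm~1.15]{Bozec}), viewing $\frakL^1(\v,\w)$ as a closed $T$-invariant subvariety of $\frakL^0(\v,\w)$; consequently the BB partition of (a) restricts to $\frakL^1(\v,\w)=\bigsqcup_\kappa(\frakL^1(\v,\w)\cap\frakM_{\gamma,\kappa})$. Given an irreducible component $C$ of $\frakL^1(\v,\w)$, a standard stratification argument produces a unique $\kappa$ for which $C\cap\frakM_{\gamma,\kappa}$ is dense in $C$. Comparing dimensions — both $\dim C=\tfrac12\dim\frakM(\v,\w)$ (Lagrangian property of $\frakL^1$) and $\dim\frakM_{\gamma,\kappa}=\tfrac12\dim\frakM(\v,\w)$ from part (a) — forces $C\cap\frakM_{\gamma,\kappa}$ to be an irreducible closed subvariety of $\frakM_{\gamma,\kappa}$ of its own dimension, hence a connected component of the smooth stratum. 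Since $p_{\gamma,\kappa}$ is an affine space bundle with connected fibres, this connected component is the full preimage of a connected component of $F_{\gamma,\kappa}$, which yields the description in (b).

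The step I expect to be the main obstacle is the dimension computation in part (a). Making the count from (\ref{E:dimfix}) land on exactly half of $\dim\frakM(\v,\w)$ requires careful use of the symplectic duality (\ref{E:duality}); in particular, one must be careful that the weight $\omega=(1,1)$ of the symplectic form aligns the $\gamma$-splitting as $\{k\leqslant 0\}$ versus $\{k\geqslant 1\}$ (rather than the naive $\{k\leqslant 0\}$ versus $\{k\geqslant 0\}$), so that the duality provides a perfect matching exhausting the whole tangent space. Once this computation is secured and combined with the known Lagrangian statements, the description of irreducible components in both (a) and (b) is formal.
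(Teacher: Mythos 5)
Your proof is correct and follows essentially the same route as the paper: identify $\frakL^0(\v,\w)$ with the attracting set for $\gamma=(a,0)$, $a<0$, apply the dimension formulas (\ref{E:dimfix}) for the Bialynicki--Birula strata, and use the symplectic duality (\ref{E:duality}) with $\omega=(1,1)$ (so that $(k,l)\mapsto(1-k,1-l)$ exchanges $\{k\leqslant 0\}$ and $\{k\geqslant 1\}$) to conclude that every stratum has dimension exactly $\tfrac12\dim\frakM(\v,\w)$. The paper handles part (b) in one line (``the other statements follow from (a)''), whereas you spell out the dimension-comparison argument showing that each irreducible component of $\frakL^1(\v,\w)$ is the closure of a full stratum $\frakM_{\gamma,\kappa}$ --- this extra detail is harmless and arguably an improvement on the exposition.
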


\smallskip

\begin{proof} The argument is similar to \cite[thm.~5.8]{Nakajima}. 
To prove (a) it is enough to show that for any $\kappa \in \chi_{\gamma}$ we have
$$\dim(p_{\gamma,\kappa}) + \dim(F_{\gamma,\kappa})=\frac{1}{2}
\dim (\frakM(\v,\w)).$$ 
Fix $\kappa \in \chi_{\gamma}$, and $z_{\rho} \in F_{\rho} \subset F_{\gamma,\kappa}$ for some $\rho \in \pi_{\gamma}^{-1}(\kappa)$ . By (\ref{E:case2}) we have
$$\dim (p_{\gamma,\kappa})=\sum_{k<0, l \in \Z} \dim(T_{\rho}[k,l]),\quad
\dim(F_{\gamma,\kappa})=\sum_{l\in \Z} \dim(T_{\rho}[0,l]).$$ But by (\ref{E:duality}) we have
$$\sum_{k\leq 0, l \in \Z} \dim(T_{\rho}[k,l]) = \sum_{k>0, l \in \Z} \dim(T_{\rho}[k,l])=\frac{1}{2} \sum_{k,l}\dim(T_{\rho}[k,l])=\frac{1}{2}\dim(\frakM(\v,\w))$$
from which we deduce the desired equality of dimension.  The proof that $\frakL^1(\v,\w)$ is Lagrangian can be found in \cite{BozecP}. The other statements follow from (a).
\end{proof}

\medskip

\subsection{Counting polynomials of quiver varieties}\hfill\\

In this paragraph we fix $\v,$ $\w$ and assume that the field $\fq$ is large enough to contain
the field of definition of the quiver varieties $\frakM(\v,\w),$ $ \frakL(\v,\w),$ etc, and 
the field of definition of the partitions and the affine space bundles considered in the previous section.

\smallskip

We will say that an $\fq$-algebraic variety $X$ has \emph{polynomial count} if there exists a polynomial $P(X,t)$ in $\Q[t]$ 
such that for any finite extension $\kk$ of $\mathbb{F}_{q}$ we 
have $|X(\kk)|=P(X,|\kk|).$

\smallskip

Let $\ell\neq p$ be a prime number. 
We will say that an $\fq$-algebraic variety $X$ is \textit{$\ell$-pure} (resp. \textit{very $\ell$-pure}) if all eigenvalues 
of the Frobenius endomorphims $F$ of the $\ell$-adic cohomology groups 
$H^i_c(X \otimes \fqb, \qlb)$ are of norm $q^{i/2}$ (resp. are equal to $q^{i/2}$). 
The Grothendieck-Lefschetz formula implies that
$$|X(\mathbb{F}_{q^r})|=\sum_i (-1)^i \Tr(F^r, H^i_c(X \otimes \fqb, \qlb)).$$
Hence if $X$ is very $\ell$-pure then it has polynomial count and its counting polynomial coincides with its $\ell$-adic Poincar\'e 
polynomial
 $$P_c(X,t):=\sum_i \dim\;H^{2i}_c(X \otimes \fqb,\qlb)\,t^i$$
Conversely, it is a classical result that if $X$ is of polynomial count and $\ell$-pure then it is very $\ell$-pure and has no odd 
cohomology  see, e.g., \cite[lem.~A.1]{CBVdB}.
Further, the counting polynomial $P(X,t)$ belongs to $\N[t]$.

\smallskip

The following result may be found in \cite[thm.~1]{Hausel}, see also \cite[prop.~6.1, thm.6.3]{Mozgovoy}. 

\smallskip

\begin{theorem}[Hausel]\label{P:ply0} The variety $\frakM(\v,\w)$ is very $\ell$-pure and of polynomial count. Its counting polynomial
(which is equal to its Poincar\'e polynomial in $\ell$-adic cohomology)
 is determined by the equality
 \begin{equation}\label{E:proof50}
\sum_{\v}
t^{-d(\v,\w)}P(\frakM(\v,\w),t)\,z^{\v}=\frac{r(\w,t,z)}{r(0,t,z)}.
\end{equation}
\qed
\end{theorem}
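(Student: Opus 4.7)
Since this is Hausel's theorem, the plan is to sketch the standard argument combining a Crawley--Boevey trick for the point count with a Bialynicki--Birula purity argument. The starting point is to count $|\mathfrak{M}(\v,\w)(\mathbb{F}_q)|$ via the stacky identity
\begin{equation*}
\frac{|\mathfrak{M}(\v,\w)(\mathbb{F}_q)|}{q^{d(\v,\w)}}=\frac{|\mu_\v^{-1}(0)^s(\mathbb{F}_q)|}{|G_\v(\mathbb{F}_q)|},
\end{equation*}
which holds because $G_\v$ acts freely on the semistable locus by the stability condition involving $p$. First I would reinterpret framed representations $(\bar x,p,q)\in\mu^{-1}(0)^s$ as ordinary representations of the enlarged quiver $Q^\heartsuit$ obtained from $\bar Q$ by adjoining one extra vertex $\infty$ with appropriate arrows to/from each $i\in I$; the stability condition then translates exactly into ``no proper subrepresentation supported off $\infty$'', and one gets a bijection between semistable framed representations of $Q$ and honest representations of $Q^\heartsuit$ of dimension $(\v,1)$ where the vertex $\infty$ does not appear as a summand factor.

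Next I would compute the generating function $\sum_\v |\mu_\v^{-1}(0)(\mathbb{F}_q)|/|G_\v(\mathbb{F}_q)|\,z^\v$, which is the identity \eqref{E:mucount}, by the same Krull--Schmidt calculation used in the proof of Proposition~\ref{P:Hua1}, but applied to the preprojective algebra of $\bar Q$; the key input is that a point of $\mu^{-1}(0)$ is exactly a $\Pi_Q$-module, together with Crawley--Boevey's formula relating $\dim\End(M)$ for a $\Pi_Q$-module $M$ to $\dim\Hom$ for the underlying representation of $\bar Q$. Combined with Hua's formula \eqref{E:proof4.5}, this produces the right-hand side $r(\w,t,z)/r(0,t,z)$ after extracting the framing vertex contribution, which after an exponential/plethystic manipulation gives the quotient form. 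Concretely, $r(\w,t,z)$ encodes the count of nilpotent pairs $(M,f\colon V^\w\to M)$ of type $\v$, and dividing by $r(0,t,z)$ removes the multiplicative ``no $\infty$-summand'' condition. This step, essentially the argument of Mozgovoy--Reineke, is where the main combinatorial bookkeeping lies, but it is parallel to Hua's original derivation.

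For $\ell$-purity I would invoke the Bialynicki--Birula decomposition established in Section~3. Choose a generic cocharacter $\gamma=(a,b)$ with $a,b>0$; by Proposition~\ref{P:attract}(a) the attracting set is all of $\mathfrak{M}(\v,\w)$, so we obtain a filtration of $\mathfrak{M}(\v,\w)$ by closed subvarieties whose successive strata are affine space bundles over the smooth projective connected components $F_\rho$ of $\mathfrak{M}(\v,\w)^T$. Since the $F_\rho$ are smooth projective and sit inside the compact core $\mathfrak{L}(\v,\w)$, they are themselves pure, and affine-space-bundle maps preserve purity and polynomial count; hence $\mathfrak{M}(\v,\w)$ is very $\ell$-pure and polynomial count follows immediately.

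The main obstacle is the combinatorial identity identifying the exponential generating series of the stacky count with exactly $r(\w,t,z)/r(0,t,z)$; this requires either redoing Hua's Heine-type manipulation in the framed setting or quoting Mozgovoy's Theorem~6.3 \cite{Mozgovoy}. Purity and polynomial count, by contrast, are essentially formal once the Bialynicki--Birula decomposition of \S3.3 is in hand.
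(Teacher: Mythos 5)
The paper does not actually prove this statement; it is imported verbatim from Hausel's paper \cite{Hausel} and Mozgovoy \cite{Mozgovoy}, so there is no in-paper argument to compare against. Your outline (Crawley--Boevey trick for the count, a Hua-type manipulation for the generating series, Bialynicki--Birula for purity) is indeed the right shape of those proofs, but two points in your sketch are genuinely off.

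First, the opening identity $|\frakM(\v,\w)(\mathbb{F}_q)|/q^{d(\v,\w)}=|\mu_\v^{-1}(0)^s(\mathbb{F}_q)|/|G_\v(\mathbb{F}_q)|$ is wrong. Since $G_\v$ is a product of general linear groups (hence special) acting freely on the stable locus, one has directly $|\frakM(\v,\w)(\mathbb{F}_q)|=|\mu_\v^{-1}(0)^s(\mathbb{F}_q)|/|G_\v(\mathbb{F}_q)|$, with no extra power of $q$; you can check this already on $T^*\mathbb{P}^1$, where $d(\v,\w)=1$. The factor $q^{d(\v,\w)}$ does appear, but in a different place: one deforms to a generic value $\xi$ of the moment map and shows that $\mu^{-1}(\xi)/\!\!/G_{\tilde\v}$ fibers as an affine-space bundle of rank $d(\v,\w)$ over the absolutely indecomposable representations of the one-vertex-enlarged quiver $\widetilde Q$, giving $|\frakM(\v,\w)(\mathbb{F}_q)|=q^{d(\v,\w)}A_{\widetilde Q,\tilde\v}(q)$. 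This is exactly where the $t^{-d(\v,\w)}$ in \eqref{E:proof50} comes from. That deformation step is missing from your outline, and it cannot be skipped: the fiber $\mu^{-1}(0)$ itself is singular, and the passage to a generic $\xi$ (with identical class in the Grothendieck group) is what makes the count of indecomposables enter. This is the Crawley--Boevey--Van den Bergh argument \cite{CBVdB}, reproduced in the paper in the proof of Proposition~\ref{P:ply0n}.

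Second, the purity paragraph overclaims. The Bialynicki--Birula decomposition with smooth projective strata $F_\rho$ gives only $\ell$-purity of $\frakM(\v,\w)$ (Frobenius eigenvalues of the right \emph{norm}, by Weil II); it gives neither polynomial count nor very $\ell$-purity (eigenvalues \emph{equal} to $q^{i/2}$). Polynomial count must come from the direct count in the first part of your argument, and it is then the combination of $\ell$-purity with polynomial count that yields very $\ell$-purity and vanishing of odd cohomology, via the standard lemma recalled in \S 3.4 just before the theorem (cf.\ \cite[lem.~A.1]{CBVdB}). Your closing sentence, asserting that ``purity and polynomial count are essentially formal once the BB decomposition is in hand,'' is therefore misleading for polynomial count, and it leaves the crucial step from pure to very pure unaddressed.
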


\smallskip

Our aim is now to generalize this result to the quiver varieties $\frakM^0(\v,\w),$ $\frakL(\v,\w),$ etc. 
First of all, the methods of Mozgovoy explained in \cite[\S 6]{Mozgovoy} can be adapted to get the following.

\smallskip

\begin{proposition}\label{P:ply0n} The varieties $\frakM^0(\v,\w)$, $\frakM^{1}(\v,\w)$ are very $\ell$-pure and of polynomial count. 
The counting polynomials of $\frakM^0(\v,\w)$ and $\frakM^{1}(\v,\w)$ 
(which are equal to their respective Poincar\'e polynomials in $\ell$-adic cohomology)
 are given by the following formula~:
\begin{equation}\label{E:county}
\begin{split}
&\sum_{\v}
t^{-d(\v,\w)}P(\frakM^0(\v,\w),t)\,z^{\v} =\frac{r^{0}(\w,t,z)}{r^{0}(0,t,z)},\\
&\sum_{\v}
t^{-d(\v,\w)}P(\frakM^{1}(\v,\w),t)\,z^{\v} =\frac{r^{1}(\w,t,z)}{r^{1}(0,t,z)}.
\end{split}
\end{equation}
The same holds for the varieties $\frakM^{*,0}(\v,\w)$ and $\frakM^{*,1}(\v,\w)$ respectively.
\end{proposition}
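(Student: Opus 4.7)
The plan is to treat the two assertions—purity/polynomial count and the explicit generating function identity—separately. For purity I would apply the Bialynicki-Birula machinery of Section~3.3: by Proposition~\ref{P:attract}(d) and Proposition~\ref{P:attract2}(b), the varieties $\frakM^0(\v,\w)$ and $\frakM^{1}(\v,\w)$ arise as attracting subvarieties for suitable cocharacters of $T$ and $T'$ acting on $\frakM(\v,\w)$. Their Bialynicki-Birula (or, over an arbitrary field, Hesselink) decompositions express each $\frakM^\flat(\v,\w)$ as a finite disjoint union of locally closed affine bundles over the smooth projective connected components of the associated fixed-point loci; these components lie in $\pi^{-1}(0) = \frakL(\v,\w)$ and are therefore projective. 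The standard cell-decomposition argument then forces $H^\bullet_c(\frakM^\flat(\v,\w) \otimes \fqb, \qlb)$ to be concentrated in even degrees and pure of Tate weight, so $\frakM^\flat(\v,\w)$ is very $\ell$-pure, has polynomial count, and its counting polynomial agrees with its compactly supported $\ell$-adic Poincar\'e polynomial.

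For the explicit formula \eqref{E:county}, I would adapt Mozgovoy's derivation of Hausel's identity \eqref{E:proof50} given in \cite[\S 6]{Mozgovoy}. That argument runs in two steps: one first writes $|\frakM(\v,\w)(\fq)|$ as $|G_\v(\fq)|^{-1}$ times the number of semistable framed solutions to the moment-map equation, and uses a Harder-Narasimhan stratification together with a wall-crossing identity to relate this semistable count to the total volume of $[\mu^{-1}(0)/G_\v]$; one then invokes a Hall-algebra integration identity over the hereditary category $Rep_\kk$, combined with Hua's formula \eqref{E:proof4.5}, to identify the ratio of framed-to-unframed generating series with $r(\w,t,z)/r(0,t,z)$.

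To obtain \eqref{E:county} I would substitute throughout the Serre subcategory $Rep^\flat_\kk$ ($\flat=0,1$) of nilpotent or $1$-nilpotent representations for $Rep_\kk$. This substitution is legitimate because $Rep^\flat_\kk$ is abelian, hereditary, Krull-Schmidt, and closed under subobjects, quotients and extensions, and because a framed datum $(\bar x,p,q)$ projects to $\frakM^\flat(\v,\w)$ precisely when the underlying $Q$-representation $x$ lies in $Rep^\flat_\kk$. In particular the Harder-Narasimhan subquotients remain in $Rep^\flat_\kk$, so Mozgovoy's argument transcribes verbatim with Hua's formula replaced by the nilpotent Hua formulas proved in Corollaries~\ref{C:formulaAnil} and~\ref{C:formulaAnil2}. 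Taking the quotient of the framed and unframed series then yields $r^\flat(\w,t,z)/r^\flat(0,t,z)$ as stated; the case of $\frakM^{*,\flat}(\v,\w)$ follows by swapping $\Omega$ and $\Omega^*$ throughout.

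The main point requiring care is the semistability/wall-crossing step: one must check that stratifying $\mu_\v^{-1}(0) \cap (E^\flat_\v \times \bigoplus_i \Hom(V_i,W_i))$ by Harder-Narasimhan type yields contributions that organize naturally into the nilpotent Hua series $r^\flat$ and not the full series $r$. This compatibility is a direct consequence of the Serre-subcategory property of $Rep^\flat_\kk$, but it is the place in Mozgovoy's derivation that genuinely needs revisiting; once verified, the rest of the argument goes through with the polynomials $A^\flat_\v$ replacing $A_\v$.
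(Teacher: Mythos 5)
Your high-level plan — establish purity by Bialynicki--Birula, then obtain the counting polynomial by adapting Mozgovoy's argument with $\Rep^\flat$ replacing $\Rep$ — is the right one, and both the reference to Propositions~\ref{P:attract}(d) and~\ref{P:attract2}(b) and the observation that the $*$-versions follow by reversing arrows are correct. However there are two gaps worth flagging.

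First, your description of the core of Mozgovoy's argument is not what is actually in \cite[\S 6]{Mozgovoy}, nor is it what the paper transposes. There is no Harder--Narasimhan stratification and no wall-crossing identity in that proof. The argument runs instead through Crawley-Boevey's trick: one realizes $\frakM(\v,\w)$ as a quiver variety for an extended quiver $\widetilde Q$, deforms the moment-map level over a generic $G_{\tilde\v}$-invariant line $L\subset\frakg_{\tilde\v}$, shows that all fibers of the family $\mathcal X\to L$ have the same class in the Grothendieck ring of varieties, and finally (as in Crawley-Boevey--Van den Bergh) identifies a generic fiber with the moduli of absolutely indecomposable $\widetilde Q$-representations of dimension $\tilde\v$, so that $|\frakM(\v,\w)(\fq)|=q^{d(\v,\w)}A_{\widetilde Q,\tilde\v}(q)$. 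Because you frame the derivation as a wall-crossing one, you locate the place ``that genuinely needs revisiting'' in the wrong spot. The two genuine things to verify in the nilpotent adaptation are: (i) that the attracting subvariety $\mathcal X_\gamma$ of the deformed family, for $\gamma$ as in Proposition~\ref{P:attract}(d), is cut out exactly by nilpotency of the $\widetilde Q$-part (this requires re-running the proof of Proposition~\ref{P:attract}(d) over $L$ rather than over $\{0\}$), and (ii) that the Crawley-Boevey--Van den Bergh identification of a generic fiber restricts to an identification of $\mathcal X_{\gamma,\xi}$ with the set of geometrically indecomposable \emph{nilpotent} $\widetilde Q$-representations, giving $|\mathcal X_{\gamma,\xi}(\fq)|=q^{d(\v,\w)}A^0_{\widetilde Q,\tilde\v}(q)$. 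The observation that $\Rep^\flat$ is a Serre subcategory is indeed the property that makes this restriction work, but the specific step it is needed for is the CBVdB parametrization and the attracting-set computation, not an HN recursion. Only after these two checks does the formal manipulation with Hua's formula (here the nilpotent Hua formulas of Corollaries~\ref{C:formulaAnil} and~\ref{C:formulaAnil2}) yield the ratio $r^\flat(\w,t,z)/r^\flat(0,t,z)$.

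Second, your purity argument has a smaller gap: a cell decomposition with smooth projective bases is not by itself enough to conclude that cohomology is concentrated in even degrees and is of Tate type (consider an elliptic curve as a fixed-point component). What is needed, and what the argument of Proposition~\ref{P:ply} actually does, is to first run the Bialynicki--Birula decomposition of $\frakM(\v,\w)$ itself and bootstrap from Hausel's Theorem~\ref{P:ply0}: knowing $\frakM(\v,\w)$ has polynomial count and is $\ell$-pure forces each $F_\rho$ to be very $\ell$-pure with vanishing odd cohomology, and only then does the cell decomposition of $\frakM^\flat(\v,\w)$ over the same $F_\rho$ transfer these properties. You should make this bootstrap explicit rather than invoking ``the standard cell-decomposition argument.''

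Once these two points are corrected the proposal aligns with the paper's proof.
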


\smallskip

\begin{proof}
The proof given in~\cite[\S 6]{Mozgovoy} can be easily transposed to the present context, replacing Kac polynomials and $\frakM(\v,\w)$ by their nilpotent 
versions $A^{0}$ and $\frakM^{0}(\v,\w)$ or $A^{1}$ and $\frakM^{1}(\v,\w)$. We sketch the main steps for the reader's comfort,
in the case of $\frakM^0(\v,\w)$.

\smallskip

Fix $\v$ and $\w$. Consider the extended
quiver $\widetilde{Q}=(\widetilde{I}, \widetilde{\Omega})$ with vertex set $\widetilde{I}=I \cup \{\infty\}$ and with, in addition to the 
edges in $\Omega$,  $v_i$ oriented edges from $i$ to the new vertex $\infty$ for any $i \in I$.
Set $\tilde{\v}=(\v,1)$ in $\mathbb{N}^{\widetilde{I}}$ and write
$\widetilde{E}_{\tilde{\v}}=\bigoplus_{h \in \widetilde{\Omega}} \Hom(\k^{\tilde{v}_{h'}}, \k^{\tilde{v}_{h''}})$. 
Let 
\begin {align*}
\tilde{\mu} &: \widetilde{E}_{\tilde{\v}} \oplus \widetilde{E}_{\tilde{\v}}^* \to \frak{g}_{\tilde\v},\\
\mu &: M(\v,\w) \to \frak{g}_{\v}
\end{align*}
be the moment maps with respect to the actions of $G_{\tilde\v}$ and $G_\v$. 
Then, we have 
$$\mu^{-1}(0)^s \simeq \tilde{\mu}^{-1}(0)^s,$$ where the stability on the left hand side
is as in \S\,3.1 while the stability on the right hand side is 
defined in terms of some appropriate generic character. We deduce that
$$\frakM(\v,\w)=\widetilde{\mu}^{-1}(0)^s/\!\!/G_{\tilde\v}.$$ 
See Crawley-Boevey's trick in \cite{CB01}. 

\smallskip

Now, choose a 
generic $G_{\tilde\v}$-invariant line $L \subset \frakg_{\tilde{\v}}$ in such a way that we have
$$\widetilde{\mu}^{-1}(\xi)^s =\widetilde{\mu}^{-1}(\xi) \neq \emptyset,\quad\forall \xi \in L\setminus\{0\}.$$ 
Such a line exists if the field $\k$ is of large enough characteristic. 
Hence, setting $$\mathcal{X}=\widetilde{\mu}^{-1}(L)^s/\!\!/G_{\tilde\v},$$ we obtain a smooth variety equipped with a map 
$\pi: \mathcal{X} \to L$ such that $\pi^{-1}(0) = \frakM(\v,\w)$. 

\smallskip

Next we choose a
cocharacter $\gamma$ of $T$ as in Proposition~\ref{P:attract}(d), whose attracting variety in 
$\frakM(\v,\w)$ is $\frakM^0(\v,\w)$.
Consider the corresponding $\mathbb{G}_m$-action on $\mathcal{X}$ and $L$. Let 
$\mathcal{X}_{\gamma} \subset \mathcal{X}$ be the attracting variety. 

\smallskip

Arguing as in the proof of Proposition~\ref{P:attract}(d), we have 
$$\mathcal{X}_{\gamma}=(\widetilde{\mu}^{-1}(L)^s \cap (\widetilde{E}^0_{\tilde\v} \oplus \widetilde{E}_{\tilde\v}^*)) /\!\!/ G_{\tilde\v},$$
where
$\widetilde{E}^0_{\tilde\v}$ is the set of nilpotent representations of $\widetilde{Q}$. 
By the same argument as in \cite[Appendix]{CBVdB}, see also \cite[\S 3]{Mozgovoy}, 
the schemes 
$$\mathcal{X}_{\gamma,\,\xi} = \mathcal{X}_{\gamma} \cap \pi^{-1}(\xi),\quad \forall \xi \in L$$ 
all have the same class in the Grothendieck group of $\k$-schemes. So they have the same class as
$$\mathcal{X}_{\gamma,0}=\frakM^0(\v,\w).$$

\smallskip

Finally, the argument of \cite[prop.~2.2.1]{CBVdB} shows that for $\xi \neq 0$, the 
natural projection 
$$\mathcal{X}_{\gamma,\,\xi} \to \widetilde{E}^0_{\tilde{\v}}/ \!\!/G_{\tilde\v}$$ 
maps onto the set of geometrically 
indecomposable
nilpotent $\widetilde{Q}$-representations, and that we have
$$|\mathcal{X}_{\gamma,\,\xi}(\mathbb{F}_q) |=q^{d(\v,\w)}A^0_{\widetilde{Q}, \tilde\v}(q)$$
for large enough fields $\mathbb{F}_q$. This implies that $\frakM^0(\v,\w)$ also has polynomial count, with counting polynomial
equal to $q^{d(\v,\w)}A^0_{\widetilde{Q}, \tilde\v}(q)$. Formula (\ref{E:county}) is deduced from the above 
as in \cite[thm.~6.3]{Mozgovoy}.
\end{proof}

\smallskip

Next, we use the Bialynicki-Birula decompositions constructed in \S\ref{sec:3.3} to deal with the nilpotent quiver varieties.

\smallskip

\begin{proposition}\label{P:ply} The varieties $\frakL(\v,\w)$, $\frakL^\flat(\v,\w)$ and $\frakL^{*,\flat}(\v,\w)$ are very $\ell$-pure and of 
polynomial count over $\fq$.  
Their counting polynomials are equal to their $\ell$-adic Poincar\'e polynomials.
The counting polynomials satisfy the following relations~:
\begin{align}\label{E:proof6}
\sum_{\v}
t^{-d(\v,\w)}P(\frakL(\v,\w),t)\,z^{\v}&=\frac{r(\w,t^{-1},z)}{r(0,t^{-1},z)}\\
\sum_{\v}
t^{-d(\v,\w)}P(\frakL^\flat(\v,\w),t)\,z^{\v}&=\frac{r^{\flat}(\w,t^{-1},z)}{r^{\flat}(0,t^{-1},z)},\\
P(\frakL^\flat(\v,\w),t)&=P(\frakL^{*,\flat}(\v,\w),t).
\end{align}
\end{proposition}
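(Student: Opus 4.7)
The plan is to deduce Proposition~\ref{P:ply} from Theorem~\ref{P:ply0}, Proposition~\ref{P:ply0n}, and the Bialynicki--Birula decompositions constructed in Section~\ref{sec:3.3}, using Poincaré duality on the smooth projective fixed-point components $F_\rho$.

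First, I would establish purity and polynomial count. Proposition~\ref{P:fibrations} and its $T'$-analog partition $\frakL(\v,\w)$ and $\frakL^{\flat}(\v,\w)$ into locally closed strata $Y_\rho$, each fibered by (iterated) affine space bundles $v_\rho: Y_\rho \to F_\rho$ over smooth projective $F_\rho$. Each $F_\rho$ is very $\ell$-pure of polynomial count with only even cohomology. Affine bundle structures preserve very $\ell$-purity and polynomial count, so each stratum $Y_\rho$ is very $\ell$-pure with counting polynomial $t^{\dim v_\rho}P(F_\rho, t)$. Since the stratification is induced by a filtration by closed subvarieties, an induction using the long exact sequence in compactly supported $\ell$-adic cohomology together with the vanishing of odd cohomology and purity of the strata propagates very $\ell$-purity and polynomial count to $\frakL(\v,\w)$ and $\frakL^{\flat}(\v,\w)$, yielding
$$P(\frakL(\v,\w), t) = \sum_\rho t^{\dim v_\rho}P(F_\rho, t),$$
and similarly for $\frakL^{\flat}(\v,\w)$. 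The same argument applies to $\frakL^{*,\flat}(\v,\w)$.

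Next, I would derive the counting polynomial identities via Poincaré duality. The crucial observation is that Proposition~\ref{P:fibrations} and its $T'$-analog present $\frakM(\v,\w)$ (resp.\ $\frakM^{*,\flat}(\v,\w)$) and $\frakL(\v,\w)$ (resp.\ $\frakL^{\flat}(\v,\w)$) as partitioned over \emph{the same} components $F_\rho$, with the dimension relation $\dim(u_\rho) + \dim(v_\rho) + \dim(F_\rho) = 2d(\v,\w)$. Since $F_\rho$ is smooth projective, Poincaré duality gives $P(F_\rho, t) = t^{\dim F_\rho}P(F_\rho, t^{-1})$. Combining these yields
\begin{align*}
P(\frakL(\v,\w), t) &= t^{2d(\v,\w)}P(\frakM(\v,\w), t^{-1}),\\
P(\frakL^{\flat}(\v,\w), t) &= t^{2d(\v,\w)}P(\frakM^{*,\flat}(\v,\w), t^{-1}).
\end{align*}
Multiplying by $t^{-d(\v,\w)}$, summing over $\v$ and substituting $t \mapsto t^{-1}$ in Theorem~\ref{P:ply0} and Proposition~\ref{P:ply0n} delivers the desired generating series in terms of $r(\w, t^{-1}, z)/r(0, t^{-1}, z)$ and $r^{*,\flat}(\w, t^{-1}, z)/r^{*,\flat}(0, t^{-1}, z)$ respectively, where $r^{*,\flat}$ denotes the analog of $r^{\flat}$ for the opposite quiver $Q^*$.

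Finally, I would address the symmetry between $\frakL^{\flat}$ and $\frakL^{*,\flat}$. Running the previous step with the cocharacter exchanging the roles of $\Omega$ and $\Omega^*$ (cf.\ Proposition~\ref{P:attract}(c),(d)) yields dually $P(\frakL^{*,\flat}(\v,\w), t) = t^{2d(\v,\w)}P(\frakM^{\flat}(\v,\w), t^{-1})$, so the equality $P(\frakL^{\flat}, t) = P(\frakL^{*,\flat}, t)$ reduces via Proposition~\ref{P:ply0n} to the identity $r^{\flat}(\w, t, z)/r^{\flat}(0, t, z) = r^{*,\flat}(\w, t, z)/r^{*,\flat}(0, t, z)$. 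For $\flat = 1$ all ingredients of $r^{1}$ are orientation-independent, so the identity is immediate. For $\flat = 0$, one invokes the invariance of $A^{0}_{\v}(t)$ under reversal of all arrows (Remark~\ref{R:kactranspo}) together with a direct comparison of the $H$-factors in the definition of $r^{0}$. This last combinatorial check is the main obstacle; the purity, polynomial count, and Poincaré duality arguments are routine applications of the Bialynicki--Birula technique to the smooth symplectic quiver varieties.
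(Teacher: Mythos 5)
Your proposal follows the paper's proof essentially step by step: Bialynicki--Birula strata over the common fixed-point components $F_\rho$, induction via the long exact sequence in compactly supported $\ell$-adic cohomology to propagate purity and polynomial count, then Poincar\'e duality on the smooth projective $F_\rho$ combined with the dimension identity of Proposition~\ref{P:fibrations} to express $P(\frakL(\v,\w),t)=t^{2d(\v,\w)}P(\frakM(\v,\w),t^{-1})$ and its $\flat$-analogues, followed by the specialization of Theorem~\ref{P:ply0} and Proposition~\ref{P:ply0n}. This is the paper's argument.

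One point you state as given but the paper actually has to prove: that each $F_\rho$ is very $\ell$-pure of polynomial count with only even cohomology. Smooth projectivity gives $\ell$-purity, but it gives neither polynomial count nor very $\ell$-purity nor the vanishing of odd cohomology. In the paper these properties are \emph{extracted} from Hausel's Theorem~\ref{P:ply0}, by running the very same inductive long-exact-sequence argument on the decomposition $\frakM(\v,\w)=\bigsqcup_\rho Z_\rho$ first: since $\frakM(\v,\w)$ is known to be very $\ell$-pure of polynomial count, the Frobenius eigenvalues (with multiplicity) on $H^\bullet_c(\frakM(\v,\w))$ match, up to shifts, those of $\bigsqcup_\rho H^\bullet_c(F_\rho)$, and this forces the stated properties of each $F_\rho$. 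Your plan should make this step explicit rather than asserting the properties of $F_\rho$ as known. On the other hand, you are a bit more careful than the paper about what is genuinely needed for the $\flat=0$ symmetry: Remark~\ref{R:kactranspo} (or Corollary~\ref{C:formulaAnil2}) only immediately controls $r^0(0,t,z)$, and showing $r^0(\w,t,z)/r^0(0,t,z)=r^{*,0}(\w,t,z)/r^{*,0}(0,t,z)$ requires a further observation (either the orientation-reversal comparison of the $H$-factors you flag, or equivalently the isomorphism between $\frakM^0(\v,\w)$ and $\frakM^{*,0}(\v,\w)$ which makes the two counting polynomials coincide directly).
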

\begin{proof} An analogous statement is proved in \cite[\S 5--8]{NakAnn} for virtual Hodge polynomials and
$ADE$ quivers. Our method is an adaptation of that proof. 
The proofs for $\frak{L}(\v,\w)$ and $\frakL^{\flat}(\v,\w)$ all proceed along the same lines. We will detail the proof for $\frakL(\v,\w)$ only.

\smallskip

We will first show that the varieties $F_{\rho}$, $\rho \in \chi$ are all very $\ell$-pure and have even cohomology.
The varieties $F_\rho$, being smooth and projective, they are pure. Hence the same holds for the affine space bundles
$u_{\rho}: {Z}_{\rho} \to F_{\rho}$ appearing in Proposition~\ref{P:fibrations}(a). We will now prove that each $F_\rho$ has polynomial 
count.

\smallskip

Since $\frakM(\v,\w)$ is quasi-projective, by Bialynicki-Birula there is an ordering $\leqslant$ on the set of connected components 
$\{F_\rho\,;\,\rho\in\chi\}$ such that 
the set $Z_{\leqslant \rho}:= \bigsqcup_{\eta \leqslant \rho} Z_{\eta}$ is open in $\frakM(\v,\w)$ for any $\rho$. 
Let 
$$i_{\rho} ~: Z_{<\rho} \to Z_{\leqslant \rho}, \quad j_{\rho} : Z_{\rho} \to Z_{\leqslant \rho}$$
be the obvious open and closed embeddings. Taking the hypercohomology of the exact triangle 
$$\xymatrix{ Ri_{\rho,!}\,i_{\rho}^! (\qlb)_{Z_{\leqslant \rho}} \ar[r]& (\qlb)_{Z_{\leqslant \rho}} \ar[r]& 
Rj_{\rho,*}\,j^*_{\rho} (\qlb)_{Z_{\leqslant \rho}}\ar^-{[1]}[r]&},$$
yields a long exact sequence in cohomology with compact support~: 
$$\xymatrix{
\cdots \ar^-{\delta_{i-1}}[r]& H^i_c( Z_{< \rho}, \qlb) \ar[r] & H^i_c(Z_{\leqslant \rho}, \qlb) \ar[r] & H^i_c(Z_{\rho}, \qlb) \ar^-{\delta_i}[r]& H^{i+1}_c(Z_{< \rho}, \qlb) \ar[r] &  \cdots}
$$
Arguing by induction, let us assume that $Z_{<\rho}$ is pure. Then the connecting homomorphisms $\delta_i$ are all zero and thus the long exact sequence above splits into short
exact sequences for each $i$, proving in turn that $Z_{\leqslant \rho}$ is pure. In particular, the variety $\frakM(\v,\w)$ is pure, see also \cite[prop. 6.2]{Mozgovoy}. 

\smallskip

The same argument proves in fact that
the set of Frobenius eigenvalues in $H^i_c(\frakM(\v,\w), \qlb)$, counted with multiplicity, is equal to the union of Frobenius eigenvalues in $H^{i-2d(\v,\w)}_c(F_\rho, \qlb)(d(\v,\w))$,
where $(d(\v,\w))$ is a Tate shift. By Proposition~\ref{P:ply0}, the variety $\frakM(\v,\w)$ has polynomial count. 
Hence it is very $\ell$-pure and we have $H^{2i+1}_c( \frakM(\v,\w), \qlb)=0$ for all $i$.  This implies that the variety $F_\rho$ satisfies the same properties for all $\rho$.

\smallskip

Reasoning in exactly the same fashion as above, but with the partition $\frakL(\v,\w) = \bigsqcup_{\rho} Y_{\rho}$ instead of $\frakM(\v,\w)=\bigsqcup_{\rho} Z_{\rho}$, we prove that 
the variety $\frakL(\v,\w)$ is also pure, of polynomial count. The same proof works for $\frakL^0(\v,\w)$ and $\frakL^{*,0}(\v,\w)$, using Proposition~\ref{P:fibrations}(b).

\smallskip

It now remains to compute the counting polynomials of $\frakL^0(\v,\w),$ $ \frakL^{*,0}(\v,\w)$ and $\frakL(\v,\w)$. To achieve this we will again use Proposition~\ref{P:fibrations} to relate the counting polynomials of $\frakL^0(\v,\w)$ and $\frakL(\v,\w)$ to that of $\frakM^{*,0}(\v,\w)$ and $\frakM(\v,\w)$ respectively. Using (\ref{E:eqdim1}) we compute
\begin{equation*}
\begin{split}
|\frakL(\v,\w)(\fq)|&=\sum_{\rho \in \chi} q^{\dim(v_{\rho})} |F_{\rho}(\fq)|\\
&= \sum_{\rho \in \chi} q^{\dim(\frakM(\v,\w))-\dim(u_{\rho}) - \dim(F_{\rho})} |F_{\rho}(\fq)|\\
&=q^{2d(\v,\w)}\sum_{\rho \in \chi}q^{-\dim(u_{\rho}) - \dim(F_{\rho})} |F_{\rho}(\fq)|.
\end{split}
\end{equation*}
Because $F_{\rho}$ is smooth and projective, we have by Poincar\'e duality
$$q^{-\dim(F_{\rho})}|F_{\rho}(\fq)|=q^{-\dim(F_{\rho})} P_c(F_{\rho}, q)=P_c(F_{\rho},q^{-1})$$
and thus
\begin{equation*}
\begin{split}
|\frakL(\v,\w)(\fq)|&=q^{2d(\v,\w)}\sum_{\rho \in \chi}q^{-\dim(u_{\rho})}P_c(F_{\rho},q^{-1})\\
&=q^{2d(\v,\w)}\sum_{\rho \in \chi}P_c(Z_{\rho},q^{-1})\\
&=q^{2d(\v,\w)} P_c(\frakM(\v,\w),q^{-1}).
\end{split}
\end{equation*}
Since this equality holds for any power of $q$, we deduce that 
$$P(\frakL(\v,\w),t)=t^{2d(\v,\w)}P(\frakM(\v,\w),t^{-1}).$$
Therefore for a fixed dimension vector $\w$ we have
$$\sum_{\v} t^{-d(\v,\w)}P(\frakL(\v,\w),t)\,z^\v=\sum_{\v} t^{d(\v,\w)} P(\frakM(\v,\w),t^{-1})\, z^{\v}=\frac{r(\w,t^{-1},z)}{r(0,t^{-1},z)}.$$
The determination of the counting polynomials of $\frakL^0(\v,\w)$ and $\frakL^{1}(\v,\w)$ proceed in the same fashion, using the 
affine space bundles in \eqref{E:eqdim2}, resp. the affine space bundles in (\ref{E:eqdim1new}), together with Proposition~\ref{P:ply0n}.
Finally, the equalities (\ref{E:proof6}) come from the fact that the Kac polynomials $A^0_{\v}$, $A^{1}_{\v}$, and hence the generating series $r^0(\w, t,z),$ $ r^{1}(\w,t,z)$ are invariant under reversing the orientation of all arrows in the quiver $Q$,
see Remark~\ref{R:kactranspo}.
\end{proof}

\medskip

\section{Proof of the theorem}

 \medskip
 
In this section we prove our Theorem~\ref{T:main}, by relating the volume of the stacks $\Lambda_{\v}$ and $\Lambda_{\v}^\flat$ to the 
number of points of nilpotent Nakajima quiver 
varieties $\frakL(\v,\w)$ and $\frakL^\flat(\v,\w)$. Again, the proofs of the three equalities in (\ref{E:theo}) being identical, we only deal 
with the one concerning 
$\Lambda_{\v}$.
  
\medskip

\subsection{The stratification of $\frakL(\v,\w)$}\hfill\\

We first relate the number of points of $\Lambda_{\v}$
and $\frakL(\v,\w)$
over finite fields.
Define a stratification of the variety $\frakL(\v,\w)$ by 
\begin{align*}
\frakL(\v,\w)&=\bigsqcup_{\w' \leqslant \w}\frakL(\v,\w)_{\w'},\\
\frakL(\v,\w)_{\w'}&=\{ G_{\v} \cdot ( \bar x, p,0)\in
\frakL(\v,\w)\,;\,\dim(\Im(\bigoplus_{i}p_i))=\w'\}.
\end{align*}

\smallskip

First, assume that $\v=\w=\w'$. Then the tuple $p=(p_h)$ can be viewed as an element of $G_\v$ and the map $(\bar x, p,0) \mapsto
(x')$ with $x'=px p^{-1}$ defines an isomorphism
$\frakL(\v,\v)_{\v} \simeq \Lambda_\v$.

\smallskip

Now, for any $\w'$ let $\Gr_{\w'}^{\w}$ be the Grassmannian of $I$-graded subspaces of $W$ of
dimension $\w'$.
The projection $\frakL(\v,\w)_{\w'} \to \Gr^{\w}_{\w'}$ is a fibration with
fiber $\frakL(\v,\w')_{\w'}$.
It follows that
\begin{equation}\label{E:proof1}
|\frakL(\v,\w)(\mathbb{F}_q)|=\sum_{\w' \leqslant \w} |\Gr_{\w'}^{\w}(\fq)| \cdot
|\frakL(\v,\w')_{\w'}(\fq)|.
\end{equation}
Inverting (\ref{E:proof1}) to express the number of $\fq$-points of
$\frakL(\v,\w)_{\w}$ for all $\w$ in terms
of the number of $\fq$-points of $\frakL(\v,\w)$ for all $\w$ yields, after a
small computation, the following.

\smallskip

\begin{lemma}\label{L:proof1} We have
\begin{align}\label{E:proof2}
|\frakL(\v,\w)_{\w}(\fq)|=\sum_{\w' \leqslant \w} (-1)^{|\w|-|\w'|}
q^{u(\w,\w')}|\Gr_{\w'}^{\w}(\fq)|\cdot |\frakL(\v,\w')(\fq)|,
\end{align}
where 
\begin{align}\label{u}u(\w,\w')=\sum_i (w_i-w'_i)(w_i-w'_i-1)/2.\end{align}
\qed
\end{lemma}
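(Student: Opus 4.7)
The plan is to obtain the stated formula by inverting the triangular system \eqref{E:proof1} via the well-known $q$-analogue of Möbius inversion on the poset $\prod_i \{0,1,\dots,w_i\}$.

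First I would observe that for any $\w,\w'$ the Grassmannian $\Gr^{\w}_{\w'}$ is the product over $i\in I$ of the ordinary Grassmannians $\Gr^{w_i}_{w'_i}$, and hence its number of $\fq$-points factors as
\[
 |\Gr^{\w}_{\w'}(\fq)| \;=\; \prod_{i\in I}\binom{w_i}{w'_i}_{\!q},
\]
where $\binom{n}{k}_q$ denotes the Gaussian binomial coefficient. Thus \eqref{E:proof1} takes the form $f(\w)=\sum_{\w'\leqslant\w}K(\w,\w')\,g(\w')$, with $f(\w)=|\frakL(\v,\w)(\fq)|$, $g(\w')=|\frakL(\v,\w')_{\w'}(\fq)|$, and kernel $K(\w,\w')=\prod_i\binom{w_i}{w'_i}_q$. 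The goal is to identify the inverse kernel in the incidence algebra of $\prod_i\{0,1,\dots,w_i\}$.

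Second, I would invoke the classical $q$-identity
\[
\sum_{k\leqslant m\leqslant n}(-1)^{n-m}q^{\binom{n-m}{2}}\binom{n}{m}_{\!q}\binom{m}{k}_{\!q}\;=\;\delta_{n,k},
\]
which is a direct consequence of the $q$-binomial theorem (or of the Möbius inversion formula on the lattice of subspaces of $\fq^n$). Taking products over $i\in I$ gives
\[
\sum_{\w'\leqslant\w''\leqslant\w}\prod_i(-1)^{w_i-w''_i}q^{\binom{w_i-w''_i}{2}}\binom{w_i}{w''_i}_{\!q}\cdot\prod_i\binom{w''_i}{w'_i}_{\!q}\;=\;\delta_{\w,\w'},
\]
so that the inverse kernel is
\[
K^{-1}(\w,\w')\;=\;(-1)^{|\w|-|\w'|}\,q^{u(\w,\w')}\,|\Gr^{\w}_{\w'}(\fq)|,
\]
after recognising that $\sum_i\binom{w_i-w'_i}{2}=u(\w,\w')$ and $\sum_i(w_i-w'_i)=|\w|-|\w'|$.

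Applying this inverse to \eqref{E:proof1} yields exactly \eqref{E:proof2}. There is no real obstacle here: the only ingredient beyond elementary bookkeeping is the $q$-Möbius identity recalled above, and once the kernel is identified as a product of Gaussian binomials the calculation is automatic. The slight care needed is in verifying the factorisation of signs and the exponent $u(\w,\w')$, which is straightforward from the definitions.
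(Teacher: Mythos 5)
Your proof is correct and is exactly the ``small computation'' the paper alludes to: the paper gives no proof beyond the remark that one should invert \eqref{E:proof1}, and inverting that triangular system via $q$-M\"obius inversion on the product poset (equivalently, using the $q$-binomial theorem at $x=-1$) is precisely what you do. The factorisation $|\Gr^{\w}_{\w'}(\fq)|=\prod_i\binom{w_i}{w'_i}_q$, the Vandermonde-type identity $\binom{n}{m}_q\binom{m}{k}_q=\binom{n}{k}_q\binom{n-k}{m-k}_q$, and the evaluation $\sum_{i}(-1)^i q^{\binom{i}{2}}\binom{N}{i}_q=\delta_{N,0}$ combine to give $\delta_{n,k}$, and $\sum_i\binom{w_i-w'_i}{2}=u(\w,\w')$ matches \eqref{u}, so the inverse kernel is as claimed.
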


\smallskip

Setting $\w=\v$ we deduce the following.

\smallskip

\begin{corollary}\label{C:proof2} We have
\begin{equation}\label{E:proof2.5}
|\Lambda_\v(\fq)|=\sum_{\w' \leqslant \v} (-1)^{|\v|-|\w'|}
q^{u(\v,\w')}|\Gr_{\w'}^{\v}(\fq)|\cdot |\frakL(\v,\w')(\fq)|.
\end{equation}
\qed
\end{corollary}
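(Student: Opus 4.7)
The plan is to obtain Corollary~\ref{C:proof2} as the specialization $\w = \v$ of Lemma~\ref{L:proof1}. The only thing that has to be checked is the identification $|\frakL(\v,\v)_\v(\fq)| = |\Lambda_\v(\fq)|$, which is asserted informally in the paragraph just before Lemma~\ref{L:proof1} and which I would make explicit, since everything else in the corollary is then a direct substitution.

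For the identification, I would unpack the definition of the stratum $\frakL(\v,\v)_\v$: a point $G_\v \cdot (\bar x, p, 0)$ lies in it precisely when $\bigoplus_i p_i \colon V \to W$ is surjective. Since $\dim V = \v = \dim W$, this surjection is an isomorphism, and after the tacit identification $V_i = W_i$ each component $p_i$ becomes an element of $GL(V_i)$, so that $p$ may be viewed as an element of $G_\v$. Stability is automatic, and using $p$ to gauge away the framing one represents each $G_\v$-orbit uniquely by a triple of the form $(p^{-1}\bar x p, \id, 0)$. With $q = 0$ the Nakajima moment map \eqref{E:momentmap} reduces to $\mu_\v$, so the equation $(\bar x, p, 0) \in \mu^{-1}(0)$ together with nilpotency of $\bar x$ forces $p^{-1}\bar x p \in \mu_\v^{-1}(0) \cap \bar E_\v^0 = \Lambda_\v$. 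This yields an isomorphism of $\fq$-varieties $\Lambda_\v \iso \frakL(\v,\v)_\v$, hence an equality of point-counts over any finite field.

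Once this identification is in hand, one specializes \eqref{E:proof2} to $\w = \v$ and substitutes $|\frakL(\v,\v)_\v(\fq)| = |\Lambda_\v(\fq)|$ on the left-hand side to obtain \eqref{E:proof2.5} verbatim. There is no genuine obstacle: all of the combinatorial content sits inside Lemma~\ref{L:proof1} (where the Möbius inversion with $q$-binomials producing the sign $(-1)^{|\v|-|\w'|}$ and the exponent $u(\v,\w')$ is carried out), and the corollary is just the specialization of that identity to the diagonal case $\w = \v$ combined with the gauge-fixing argument above.
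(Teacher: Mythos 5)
Your proposal is correct and follows exactly the paper's route: identify $\frakL(\v,\v)_\v \simeq \Lambda_\v$ by gauge-fixing $p$ to the identity (the paper sends $(\bar x,p,0)$ to $p\bar xp^{-1}$; your $p^{-1}\bar xp$ is the same up to the choice of which side $G_\v$ acts, and has no bearing on the argument), and then set $\w=\v$ in Lemma~\ref{L:proof1}. The only thing you add is a careful unwinding of the isomorphism that the paper asserts informally in the paragraph preceding Lemma~\ref{L:proof1}, which is a reasonable expansion.
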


\medskip

\subsection{Proof of Theorem~\ref{T:main}}\hfill\\ 

We may now proceed to the proof of Theorem~\ref{T:main}.
It is a direct computation using (\ref{E:proof2.5}) together with
(\ref{E:proof6}). For this, we consider the formal series $T_\w(z)$ in $\mathbf{L}$ given by
\begin{equation}\label{E:proof11}
T_{\w}(z)=\sum_{\v}
\frac{|\frakL(\v,\w)_{\w}(\fq)|}{|G_{\w}(\fq)|}\,q^{\langle\v,\v\rangle}\,z^{\v}\in \frac{|\Lambda_{\w}(\fq)|}{|G_{\w}(\fq)|}\,q^{\langle\w,\w\rangle}\,z^{\w} + \bigoplus_{\v < \w} \C z^{\v}.
\end{equation}
Using Lemma~\ref{L:proof1} and Proposition~\ref{P:ply} we get
\begin{equation*}
\begin{split}
T_{\w}(z)&=\sum_{\w' \leqslant \w}
(-1)^{|\w|-|\w'|}\, q^{u(\w,\w')}\,\frac{|\Gr^\w_{\w'}(\fq)|}{|G_{\w}(\fq)|}\,
\sum_{\v} |\frakL(\v,\w')(\fq)|\,q^{\langle \v,\v\rangle}\,z^{\v}\\
&=\sum_{\w' \leqslant \w} (-1)^{|\w|-|\w'|}\,q^{u(\w,\w')}\,\frac{|\Gr^\w_{\w'}(\fq)|}{|G_{\w}(\fq)|}\,\frac{r(\w',q^{-1},q^{\w'}z)}{r(0,q^{-1},q^{\w'}z)},
\end{split}
\end{equation*}
where we set $(q^{\mathbf{x}}z)^{\mathbf{y}}=q^{\mathbf{x} \cdot\mathbf{y}}z^{\mathbf{y}}$.
Expanding \eqref{r} in powers of $z$, we get
\begin{equation*}
\begin{split}
\frac{r(\w',q^{-1}, q^{\w'}z)}{r(0,q^{-1},q^{\w'}z)}&=\\
=\bigg\{ \sum_{\nu}X(\nu,q)&\,q^{\w'\cdot| \nu_{>1}|}\, z^{|\nu|}\bigg\}\cdot \bigg\{\sum_{l \geqslant 0}
\sum_{\nu^{(1)},\dots,\,\nu^{(l)}} (-1)^l\,q^{\w'\cdot\sum_{k=1}^l |\nu^{(k)}|} \,z^{\sum_{k=1}^l|\nu^{(k)}|}\,\prod_{k=1}^lX(\nu^{(k)},q)\bigg\},
\end{split}
\end{equation*}
where $\nu,\nu^{(1)},\dots,\nu^{(l)}$ run over the set of all $I$-partitions, with $\nu^{(1)},\dots,\nu^{(l)}\neq0$.
Expanding this last equation, substituting it in the expression for $T_\w$ above and pairing the summands corresponding to tuples of I-partitions
$(\nu=\emptyset, \nu^{(1)}, \ldots, \nu^{(l)})$ and $(\nu=\nu^{(1)}, \nu^{(2)}, \ldots, \nu^{(l)})$ we obtain
\begin{equation}\label{E:proof10}
T_{\w}(z)=\frac{1}{|G_{\w}(\fq)|} \sum_{l \geqslant 0}\sum_{\nu^{(1)},\dots,\,\nu^{(l)}}
(-1)^{l-1}\,z^{\sum_{k=1}^l|\nu^{(k)}|}\,K_{\w}^{(l)}(\nu^{(1)},\dots,\nu^{(l)})\,\prod_{k=1}^l X(\nu^{(k)},q),
\end{equation}
where $\nu^{(1)},\dots,\nu^{(l)}$ are $I$-partitions with $\nu^{(1)}, \ldots, \nu^{(l)} \neq 0$ and
\begin{equation*}
K_{\w}^{(l)}(\nu^{(1)},\dots,\nu^{(l)})=
\sum_{\w' \leqslant \w} (-1)^{|\w|-|\w'|}\, q^{u(\w,\w')}\,
\big(q^{\w'\cdot |\nu^{(1)}_{>1}|}-q^{\w'\cdot|\nu^{(1)}|}\big)\,q^{\w'\cdot\sum_{k=2}^l |\nu^{(k)}|}\, |\Gr^\w_{\w'}(\fq)|.
\end{equation*}
Replacing $\w'$ by $\w-\w'$ and setting $\boldsymbol{1}=(1, \ldots, 1)\in \N^I$ we may rewrite $K_{\w}^{(l)}(\nu^{(1)},\dots,\nu^{(l)})$ as 
\begin{equation*}
\begin{split}
K_{\w}^{(l)}(\nu^{(1)},\dots,\nu^{(l)})
&= q^{\w \cdot (|\nu^{(1)}_{>1}| + \sum_{k=2}^l |\nu^{(k)}|)}\,
\bigg(\sum_{\w'\leqslant \w} (-1)^{|\w'|}\, q^{\frac{1}{2}\w' \cdot(\w'-\boldsymbol{1})}
\,q^{-\w'\cdot (|\nu^{(1)}_{>1}|+\sum_{k=2}^l |\nu^{(k)}|)}\, |\Gr^\w_{\w'}(\fq)| \bigg) \\
&\quad \quad -q^{\w \cdot (\sum_{k} |\nu^{(k)}|)}\bigg(\sum_{\w' \leqslant \w}
(-1)^{|\w'|} \,q^{\frac{1}{2}\w' \cdot(\w'-\boldsymbol{1})}\,  q^{-\w'\cdot\sum_{k=1}^l |\nu^{(k)}|}\, |\Gr^\w_{\w'}(\fq)|\bigg).
\end{split}
\end{equation*}
Now we use the following identity : for any $w\in \mathbb{N}$ we have 
\begin{equation*}
\sum_{w'=0}^{w} (-1)^{w'} q^{w'(w'-1)/2-aw'} |\Gr_{w'}^{w}(\fq)|=
\begin{cases} 0 & \quad \text{if}\; a=0, 1, 
\ldots w-1 \\
(1-q^{-1})\cdots (1-q^{-w}) & \quad \text{if}\; a=w. \end{cases}
\end{equation*}
This relation is a direct consequence of the $q$-binomial formula
$$\sum_{w'=0}^w q^{w'(w'-1)/2} \begin{bmatrix} w \\w'\end{bmatrix}_q x^j= (1+x) \cdots (1+q^{w-1}x)$$
with $x=-q^{-a}$.
This implies that 
$$K_{\w}^{(l)}(\nu^{(1)},\dots,\nu^{(l)})=
\begin{cases}
0&\ \text{if}\ \sum_{k=1}^l |\nu^{(k)}| - \w \not\in \N^I,\\
-|G_{\w}(\fq)|&\ \text{if}\ \sum_{k=1}^l |\nu^{(k)}|=\w.
\end{cases}$$
Comparing the coefficients of $z^{\w}$ in (\ref{E:proof10}) and
(\ref{E:proof11}) we obtain the equality
$$\frac{|\Lambda_{\w}(\fq)|}{|G_{\w}(\fq)|}\,q^{\langle \w,\w\rangle}=
\sum_{l\geqslant 0}\sum_{\nu^{(1)},\dots,\,\nu^{(l)}}(-1)^{l}\prod_{k=1}^lX(\nu^{(k)},q)$$
where the sums runs over all $l$-tuples of $I$-partitions $\nu^{(1)}, \ldots, \nu^{(l)}$ with nonzero parts such that $\sum_{k=1}^l|\nu^{(k)}|=\w$.
Summing over all $\w$ we finally obtain
$$\sum_{\w}\frac{|\Lambda_{\w}(\fq)|}{|G_{\w}(\fq)|}\,q^{\langle\w,\w\rangle}\,z^{\w}
=\frac{1}{1+ \sum_{\nu \neq 0}X(\nu,q)\,z^{|\nu|}}=\frac{1}{r(0,q^{-1},z)}.$$
The theorem is now a consequence of Hua's formula
$$r(0,q^{-1},z)=\text{Exp} \bigg(  \frac{1}{q^{-1}-1}\sum_{\v}
A_{\v}(q^{-1})\,z^{\v}\bigg).$$
Obeserve that the above proof does not use any particular properties of $X(\nu,t)$, and hence is applicable verbatim to the nilpotent
variants $X^0, X^{1}$.
\qed

\medskip

\section{Factorization of $\lambda_Q(q,z)$ and the strata in $\Lambda_{\v}$.}

\medskip

In this short section, we slightly refine the point count of the Lusztig lagrangians by computing the number of points
of certain strata in these lagrangians defined by Lusztig, and relevant to representation theory. 
We let $Q$ be an arbitrary quiver. Let $Q_J$ be the full subquiver of $Q$ corresponding to a subset of vertices $J \subseteq I$.
All the varieties associated with $Q_J$ will be denoted with a superscript $J$.
There is an obvious inclusion $\bbN^J\subseteq\bbN^I$.
If $\v \in \N^J$ then we have $\Lambda^{0,J}_{\v} \simeq \Lambda^0_{\v}$ so that there is a factorization
\begin{equation}\label{E:crystal1}
\lambda^0_Q(q,z)=\lambda^0_{Q_J}(q,z) \cdot \lambda^0_{Q \backslash Q_J}(q,z)
\end{equation}
where we set
$$\lambda^0_{Q \backslash Q_J}(q,z)=\Exp\bigg( \frac{1}{1-q^{-1}} \sum_{\v\in\bbN^I\setminus\bbN^J} {A}^{0}_{\v}(q^{-1})\, z^{\v}\bigg).$$
The Fourier modes of $\lambda^0_{Q \backslash Q_J}(q,z)$ count the (orbifold) volume of some subvarieties in $\Lambda^0_{\v}$ considered by
Lusztig. These subvarieties are defined as follows. Let $K=\{k \in \bar\Omega\,;\, k' \in I \backslash J,\, k'' \in J\}$.
Given dimension vectors $\v\in\N^I$, $\mathbf{n}\in\N^J$ such that $\v-\mathbf{n} \in \N^I$, we set
$$\Lambda^0_{\v,\mathbf{n}} =\big\{ \bar x \in \Lambda^0_{\v}\,;\, \codim (\bigoplus_{k \in K} x_k)=\mathbf{n}\big\}.$$
Each $\Lambda^0_{\v,\mathbf{n}}$ is a locally closed subvariety in $\Lambda^0_{\v}$ and we have a stratification
$$\Lambda^0_{\v}=\bigsqcup_{\mathbf{n}} \Lambda^0_{\v,\mathbf{n}}.$$
There is natural map of stacks
$$p_{\v,\mathbf{n}}~:[\Lambda^0_{\v,\mathbf{n}}/G_{\v}] \to
[\Lambda^0_{\v,0}/G_{\v}] \times [\Lambda^{0,J}_{\mathbf{n}}/G_{\mathbf{n}}]$$
given by assigning to a representation $M=\bar x$ in $\Lambda^0_{\v,\mathbf{n}}$ the pair $(F, M/F)$ where $F$
is the subrepresentation of $M$ (for the doubled quiver $\bar{Q}$) generated by $\bigoplus_{i \not\in J} V_i$. The following
is proved in \cite[\S 12]{LusJAMS}. The proof there is given in the case where $J$ is reduced to a single vertex, but it is the same in
general.

\smallskip

\begin{proposition}\label{P:crystal} The map $p_{\v,\mathbf{n}}$ is a stack vector bundle of dimension $(\v-\mathbf{n},\mathbf{n})$.
\qed
\end{proposition}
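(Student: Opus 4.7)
My plan is to adapt the strategy of Lusztig from \cite[\S 12]{LusJAMS}, which proves the analogous statement for the case $J = \{i\}$, to an arbitrary subset $J \subseteq I$.

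First, I would make the pointwise description of $p_{\v,\mathbf{n}}$ precise. Given $\bar x \in \Lambda^0_{\v, \mathbf{n}}$, let $F = F(\bar x)$ be the $\bar Q$-subrepresentation of $M = (V, \bar x)$ generated by $\bigoplus_{i \notin J} V_i$ and let $N = M/F$. Then $F_i = V_i$ for $i \notin J$, while the codimension hypothesis combined with the fact that $F$ is stable under $\bar Q$ forces $\dim F_j = v_j - n_j$ for every $j \in J$. Thus $F$ has dimension vector $\v - \mathbf{n}$, and $N$ is naturally a representation of the doubled subquiver on $J$ of dimension $\mathbf{n}$. Since the moment map condition and semi-nilpotency descend to subrepresentations and quotients, $(F, N)$ defines a point of $\Lambda^0_{\v-\mathbf{n}, 0} \times \Lambda^{0,J}_\mathbf{n}$, and $p_{\v,\mathbf{n}}$ is well-defined as a morphism of stacks.

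Next, I would identify the fiber over $([F], [N])$ with an extension space. Choose a graded linear splitting $V = F \oplus U$ with $U$ identified with the underlying graded vector space of $N$. Writing $\bar x$ block-wise with respect to this splitting, the subrepresentation condition $x_h(F_{h'}) \subseteq F_{h''}$ forces the $F \to U$ components of all arrows to vanish, while the diagonal blocks $x_h^{FF}$ and $x_h^{UU}$ are determined by $F$ and $N$ respectively. The only free data are the $U \to F$ components $x_h^{UF}$ for each arrow $h$ with $h' \in J$, forming an affine space. Imposing the moment map $\mu(\bar x) = 0$ gives a linear constraint whose diagonal parts are automatically satisfied from $\mu(F) = 0$ and $\mu(N) = 0$, while the off-diagonal parts cut out an affine subspace; a standard dimension count, in the spirit of \cite[\S 12]{LusJAMS}, shows that this subspace has codimension $\dim \Hom_{\bar Q}(N, F)$ inside the free extension space, and total dimension $(\v - \mathbf{n}, \mathbf{n}) + \dim \Hom_{\bar Q}(N, F)$.

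To conclude, I would pass to the stack quotient. The relative automorphism group of an extension of $N$ by $F$ over $\Aut(F) \times \Aut(N)$ is the unipotent group $\Hom_{\bar Q}(N, F)$ acting by translation on the extension space; this absorbs the extra $\dim \Hom_{\bar Q}(N, F)$ appearing in the scheme fiber and produces a stack vector bundle of the claimed rank $(\v - \mathbf{n}, \mathbf{n})$. The main obstacle is the dimension count of the moment map constraint and the identification of its cokernel with $\Hom_{\bar Q}(N, F)^*$, which in turn relies on the vanishing $\Hom_{\bar Q}(F, N) = 0$. This last vanishing is where the generation hypothesis on $F$ enters essentially: any morphism $F \to N$ vanishes on $\bigoplus_{i \notin J} V_i$ because $N_i = 0$ there, and hence on the $\bar Q$-subrepresentation generated by these spaces, which is all of $F$ by definition of $F$.
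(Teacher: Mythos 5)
The paper does not give its own proof of this proposition; it simply cites \cite[\S 12]{LusJAMS} and remarks that Lusztig's argument for $J$ a single vertex carries over. Your proposal is an honest attempt to reconstruct that argument for arbitrary $J$, and the overall strategy is the correct one: describe the fiber as an extension space, use the vanishing of $\Hom_{\bar Q}(F,N)$ (which you prove correctly from the generation hypothesis) to control the linearized moment map, and pass to the stack quotient by a unipotent group. However, there are several concrete errors in the execution.

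First, the group by which one quotients the extension space to produce the stack vector bundle structure is the unipotent radical $\bigoplus_i\Hom(U_i,F_i)$ of the parabolic $P\subset G_\v$ stabilizing $F_\bullet$, acting on the $U\to F$ components by the coboundary $\phi\mapsto (x^{FF}_h\phi_{h'}-\phi_{h''}x^{UU}_h)_h$. The space $\Hom_{\bar Q}(N,F)$ you name is only the kernel of this action — the stabilizer of any given extension — and quotienting by it would give the wrong stack. Correspondingly, the cokernel of the linearized moment map $b$ is dual to $\Hom_{\bar Q}(F,N)$ (not to $\Hom_{\bar Q}(N,F)$), so the generation hypothesis makes $b$ \emph{surjective} and the constraint has codimension $\sum_i\dim\Hom(U_i,F_i)$, not $\dim\Hom_{\bar Q}(N,F)$. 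Carrying this through gives rank $M-2\sum_i\dim\Hom(U_i,F_i)$ with $M=\sum_{h\in\bar\Omega,\,h'\in J}\dim\Hom(U_{h'},F_{h''})$, which equals $-(\v-\mathbf{n},\mathbf{n})$, not $(\v-\mathbf{n},\mathbf{n})$. The sign in the statement (which you reproduce) appears to be a typo in the paper: the factorization \eqref{E:crystal2} forces rank $-(\v-\mathbf{n},\mathbf{n})$, and the sanity check $Q=A_2$, $J=\{2\}$, $\v=(1,1)$, $\mathbf{n}=(0,1)$ gives $(\v-\mathbf{n},\mathbf{n})=-1$ while the fiber is visibly $\mathbb{A}^1$. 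Finally, your claim that the codimension hypothesis forces $\dim F_j=v_j-n_j$ is not a formal consequence of the stated definition of $\Lambda^0_{\v,\mathbf{n}}$: since $K$ only contains arrows leaving $I\setminus J$, the subrepresentation $F$ can strictly exceed $\Im(\bigoplus_{k\in K}x_k)$ at a vertex $j\in J$ by following arrows within $J$ (e.g.\ a loop at $j$), so for the map $p_{\v,\mathbf{n}}$ to even have a well-defined target one must read the stratum as $\{\bar x\,;\,\dim(V/F(\bar x))=\mathbf{n}\}$.
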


\smallskip

Set 
$$\lambda^0_{Q,0}(q,z)=\sum_{\v} \frac{|\Lambda^0_{\v,0}(\fq)|}{|G_{\v}(\fq)|}\, q^{\langle \v,\v\rangle}\, z^{\v}.$$
From Proposition~\ref{P:crystal} we deduce the identity
\begin{equation}\label{E:crystal2}
\lambda^0_Q(q,z)=\lambda^0_{Q_J}(q,z) \cdot \lambda^0_{Q,0}(q,z).
\end{equation}

\smallskip

\begin{corollary} If the field $\kk$ is large enough, then we have
$\lambda^0_{Q,0}(q,z)=\lambda^0_{Q \backslash Q_J}(q,z).$
\qed
\end{corollary}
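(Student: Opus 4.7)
The plan is to derive the corollary as an immediate algebraic consequence of the two factorizations (\ref{E:crystal1}) and (\ref{E:crystal2}) of $\lambda^0_Q(q,z)$ already established above. Both express $\lambda^0_Q(q,z)$ as a product whose first factor is $\lambda^0_{Q_J}(q,z)$, so cancelling this common factor immediately yields the desired identity $\lambda^0_{Q,0}(q,z)=\lambda^0_{Q\setminus Q_J}(q,z)$. The only thing to verify is that $\lambda^0_{Q_J}(q,z)\in\mathbf{L}$ is invertible as a formal power series, which is clear: its $z^0$-coefficient equals $|\Lambda^{0,J}_0(\fq)|/|G_0(\fq)|=1$, so it is a unit in $\mathbf{L}$.

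The hypothesis that $\kk$ be sufficiently large is inherited from (\ref{E:crystal1}), whose justification passes through Theorem~\ref{T:main} applied to both $Q$ and $Q_J$: one writes
$$\lambda^0_Q(q,z)=P^0_Q(q,z)=\Exp\Big(\frac{1}{1-q^{-1}}\sum_{\v\in\N^I} A^0_\v(q^{-1})\,z^\v\Big),$$
splits the indexing set $\N^I$ into $\N^J\sqcup(\N^I\setminus\N^J)$, uses the elementary observation that for $\v\in\N^J$ a $\v$-dimensional nilpotent representation of $Q$ is automatically supported on $Q_J$ (hence $A^0_\v(Q)=A^0_{\v}(Q_J)$), and invokes the multiplicativity of $\Exp$ across disjoint sums together with Theorem~\ref{T:main} for $Q_J$ to recognise the first factor as $\lambda^0_{Q_J}(q,z)$. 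The companion identity (\ref{E:crystal2}) is purely geometric, following by summation over the stratification $\Lambda^0_\v=\bigsqcup_\mathbf{n}\Lambda^0_{\v,\mathbf{n}}$ together with the stack vector bundle structure supplied by Proposition~\ref{P:crystal}, and requires no size restriction on $\kk$.

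There is no substantive obstacle: once (\ref{E:crystal1}) and (\ref{E:crystal2}) are in hand, the corollary is a one-line cancellation in $\mathbf{L}$. The only point worth emphasising is the asymmetric provenance of the two factorisations, one being obtained via the plethystic identity of Theorem~\ref{T:main} (hence the large-field hypothesis), and the other by a Lusztig-style stratification argument which is valid over any finite field.
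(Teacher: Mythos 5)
Your proposal is correct and follows exactly the paper's (implicit) argument: the corollary is obtained by comparing the two factorizations (\ref{E:crystal1}) and (\ref{E:crystal2}) of $\lambda^0_Q(q,z)$ and cancelling the common factor $\lambda^0_{Q_J}(q,z)$, which is a unit in $\mathbf{L}$ since its constant term is $1$. Your additional remarks about where the large-field hypothesis enters (through Theorem~\ref{T:main}, used to derive (\ref{E:crystal1})) and about (\ref{E:crystal2}) being a purely geometric identity are accurate and consistent with the paper.
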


\smallskip

\begin{remark}The above corollary holds (with the same proof) for the two variants $\Lambda_{\v},$ $ \Lambda^{1}_{\v}$ of the Lusztig lagrangian.
\end{remark}

\medskip

\section{Appendix}

\smallskip

In this appendix, we prove an analogue, in the context of quivers in which $1$-cycles are allowed, of the Kac conjecture relating the constant terms of Kac polynomials to multiplicities of weights in the associated Kac-Moody algebra, see Theorem~\ref{T:Kac2}.

\smallskip

The theorem of Kashiwara and Saito relating the crystal of a Kac-Moody algebra to the set of irreducible components of Lusztig Lagrangians (\cite{KS}) has been generalized to an arbitrary quiver in \cite{Bozec}, where it is shown that the number of irreducible 
components of $\Lambda^{1}_{\v}$ is equal to the dimension of the $\v$ weight space in the (positive half) of the envelopping algebra of a certain explicit infinite-dimensional Lie algebra 
$\mathfrak{g}_Q$ attached to $Q$ which contains the Borcherds algebra attached to the adjacency matrix of $Q$. 
We'll prove that
$$a_{\v,0}^{1}=\dim(\mathfrak{g}_Q[\v]),$$
where $A_{\v}^{1}(t)=\sum_k a_{\v,k}^{1}t^k$. For instance, if $Q$ is the Jordan quiver then $\mathfrak{g}_Q$ is the Heisenberg 
algebra $\mathcal{H}=\C[t,t^{-1}] \oplus \C c$ and we have $\dim(\mathcal{H}[v])=1=a_{v,0}^{1}$ for any $v >0$.

\medskip

\subsection{The generalized quantum group}\hfill\\ 

We begin with some recollections of the generalized quantum group $U_Q$ 
associated to the quiver $Q$ defined in~\cite{BozecP}. 
We will use the notations and results of~\cite{BozecP,Bozec}. 
The symmetrized Euler form will still be denoted by $(\bullet,\bullet)$. 
We denote for simplicity by $\{i\,;\,i \in I\}$ the tautological basis of $\Z^I$. 
We let $I^\text{iso}$ be the set of isotropic vertices, i.e., the vertices satisfying $(i,i)=0$ (which means that there is exactly one loop at $i$ in the quiver $Q$). 
We also put 
$$I_\infty=(I^\text{re}\times\{1\})\sqcup (I^\text{im}\times\mathbb{N}_{>0})$$ and we extend the Euler form to $I_{\infty}$ by setting $((i,l),(j,k))=lk(i,j)$.

\smallskip{}

The $\Q(v)$-algebra $U_Q$ is generated by $\{E_\iota,F_\iota\,;\,\iota\in I_\infty\}$ and $\{K^{\pm1}_i\,;\,i\in I\}$, 
with respective degrees $li$, $-li$ and $0$ if $\iota=(i,l)$. These generators are subject to the following relations:\begin{align*}
K_iK_j&=K_jK_i,\\
K_iK_i^{-1}&=1,\\
K_jE_\iota&=v^{(j,\iota)}E_\iota K_j,\\
K_jF_\iota&=v^{-(j,\iota)}F_\iota K_j,\\
\sum_{t+t'=-(\iota,j)+1}(-1)^tE_j^{(t)}E_{\iota}E_j^{(t')}&=0&&\forall j\in I^\text{re},\\
\sum_{t+t'=-(\iota,j)+1}(-1)^tF_j^{(t)}F_{\iota}F_j^{(t')}&=0&&\forall j\in I^\text{re},\\
[E_\iota,E_{\iota'}]=[F_\iota,F_{\iota'}]&=0&&\text{if }(\iota,\iota')=0,
\end{align*}
along with some other relations coming from the Drinfeld double construction, which are not important for our purposes. We will also use an alternative set of primitive generators $a_{i,l}$ 
and $b_{i,l}$, also defined in \textit{loc. cit.}, of respective degree $li$ and $-li$. They satisfy a simpler set of relations, including
\begin{align*}
[a_{i,l},b_{i,l}]=\tau_{i,l}(K_{-li}-K_{li})
\end{align*}
for some constants $\tau_{i,l}\in\QQ(v)$.

\medskip

\subsection{Character formulas}\hfill\\

We now prove some character formulas, both for the algebra $U_Q$ and for its irreducible highest weight representations. 
Let $W$ be the Weyl group associated to $I^\textup{re}$.
For $\lambda\in P^+$, the set of dominant integral weights, let $\sigma_\lambda$ be the set of possible values for sums
\begin{align*}s=-\sum_{1\leqslant k\leqslant r}l_ki_k\end{align*} where $l_k>0$ and the vertices 
$i_k$ are pairwise orthogonal imaginary vertices, each perpendicular to $\lambda$. Note that this implies that if $i\notin I^\text{iso}$, then we have $|\{k\,;\, i_k=i\}|=1$. 
For such a sum, set
\begin{align*}
 \epsilon(s)=(-1)^\text{niso}\prod_{i\in I^\text{iso}}\phi_{\sum_{k\,;\,i_k=i}l_{k}}\end{align*}
 where
 \begin{align*} \text{niso}=|\{k\,;\, i_k\notin I^\text{iso}\}|\end{align*}
and $\phi(q)$ is the Euler function given by
\begin{align*} 
\phi(q)=\prod_{p\geqslant 1}(1-q^p)=\sum_{l\geqslant 0}\phi_lq^l=\sum_{n\in \ZZ}(-1)^nq^{(3n^2-n)/2}.\end{align*}

\smallskip

 Fix some formal variables $e^\alpha$ with $\alpha\in\ZZ I$ such that
 $e^{\alpha+\beta}=e^{\alpha}e^{\beta}$ for every $\alpha,\beta\in\ZZ I$ and $w.e^\alpha=e^{w(\alpha)}$ for every $w\in W$. Then we set
 \begin{align*} 
 S_\lambda=\sum_{s\in\sigma_\lambda}\epsilon(s)e^{s}.
 \end{align*}
The character of a $U_Q$-module $M$ is defined as \begin{align*}
\operatorname{Ch}(M)=\sum_{\lambda\in P}\dim (M_\lambda)\, e^\lambda\end{align*}
where $M_\lambda$ is the subspace of weight $\lambda$.

\smallskip

\begin{theorem} For every $\lambda\in P^+$, we have
\begin{equation}\label{carlambda}
\operatorname{Ch}(V(\lambda))=
\Big\{\sum_{w\in W}\epsilon(w)e^{-\rho+w(\lambda+\rho)}w(S_\lambda)\Big\}\operatorname{Ch}(U_Q^-),
\end{equation}
where $V(\lambda)$ is the simple module of highest weight $\lambda$,
and
\begin{equation}\label{carU}
\operatorname{Ch}(U_Q^-)=\Big\{\sum_{w\in W}\epsilon(w)e^{-\rho+w\rho}w(S_0)\Big\}^{\!\!-1}.\end{equation}
\end{theorem}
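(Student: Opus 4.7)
\medskip

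\noindent\textbf{Proof proposal.}
My plan is to prove the character formula \eqref{carlambda} for an arbitrary dominant integral weight $\lambda \in P^+$ by adapting Borcherds' generalization of the Weyl-Kac character formula to the generalized quantum group $U_Q$ of \cite{BozecP, Bozec}, and then to derive \eqref{carU} from it as the specialization $\lambda = 0$. Indeed, $V(0)=\mathbb{Q}(v)$ is the trivial module, with character $1$, and substituting into \eqref{carlambda} immediately gives
$$1 \;=\; \Big\{\sum_{w \in W} \epsilon(w)\,e^{-\rho + w\rho}\,w(S_0)\Big\}\operatorname{Ch}(U_Q^-),$$
which is \eqref{carU}.

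To establish \eqref{carlambda} I would follow the BGG-resolution strategy. First, identify the singular vectors generating the maximal proper submodule of the Verma module $M(\lambda)$: for each real simple root $i$, the Serre-type vector $F_i^{\langle\lambda,\alpha_i^\vee\rangle + 1}\,v_\lambda$; for each non-isotropic imaginary vertex $i$ orthogonal to $\lambda$ and each $l\geqslant 1$, the primitive vector $b_{i,l}\,v_\lambda$; and at each isotropic vertex $i$ orthogonal to $\lambda$, the Fock-type subrepresentation generated by the whole family $\{b_{i,l}\,v_\lambda\,;\,l\geqslant 1\}$, which is a copy of a highest weight module over the quantum Heisenberg subalgebra of $U_Q^-$ attached to $i$. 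That these exhaust the singular vectors should follow from a Casimir/Shapovalov argument in the spirit of Kac-Kazhdan, relying on the triangular decomposition and the Drinfeld-double structure of $U_Q$ recalled in the preamble.

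Once the maximal submodule is pinned down, construct a Borcherds-BGG type resolution of $V(\lambda)$ by (possibly infinite sums of) Verma modules. The alternating sum over Weyl group elements produces the factor $\sum_w \epsilon(w)\, e^{-\rho+w(\lambda+\rho)}\,w(\cdot)$; the correction $S_\lambda$ arises by summing over finite subsets of pairwise orthogonal imaginary simple vertices perpendicular to $\lambda$, where each non-isotropic vertex contributes a simple sign $-1$, while each isotropic vertex contributes a Fock-Koszul complex whose graded Euler characteristic is the Euler function coefficient $\phi_l$ appearing in $\epsilon(s)$. Taking characters of the resolution yields
$$\operatorname{Ch}(V(\lambda))\cdot\operatorname{Ch}(U_Q^-)^{-1} \;=\; \sum_{w\in W}\epsilon(w)\,e^{-\rho+w(\lambda+\rho)}\,w(S_\lambda),$$
which is equivalent to \eqref{carlambda}.

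The main obstacle is the isotropic contribution. At an isotropic vertex $i$, the subalgebra of $U_Q^-$ generated by $\{b_{i,l}\}_{l\geqslant 1}$ is an honest quantum Heisenberg algebra, so the corresponding singular vectors form an infinite tower parametrized by partitions, and the usual finite BGG complex must be enlarged into a Fock-Koszul complex at each such vertex. Tracking the graded Euler characteristic of this enlargement — so that, after Weyl symmetrization and summation over families of pairwise orthogonal imaginary vertices perpendicular to $\lambda$, it matches exactly the explicit sum defining $S_\lambda$ with its sign $(-1)^{\text{niso}}$ and its Euler function coefficients — is the delicate combinatorial point. Everything else is a direct transcription of the Borcherds character argument to the quantum setting of $U_Q$.
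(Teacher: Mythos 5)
Your derivation of \eqref{carU} from \eqref{carlambda} by setting $\lambda=0$ is correct, but your strategy for \eqref{carlambda} takes a genuinely different — and substantially harder — route than the paper, and it contains a real gap. You propose to construct a Borcherds--BGG resolution of $V(\lambda)$ by Verma modules, enlarged at isotropic vertices by a ``Fock--Koszul complex''. The existence and exactness of such a resolution for a generalized quantum group of Borcherds type (with imaginary simple roots carrying multiplicities, Heisenberg-type subalgebras at isotropic vertices, and quantum deformations on top) is not a known result, and its construction would be a significant paper on its own. You treat it as a transcription of ``the Borcherds character argument'', but Borcherds' proof of the character formula for generalized Kac--Moody algebras does \emph{not} go through a BGG resolution; it goes through a Casimir/Kac--Kazhdan argument. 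So your sketch is internally in tension: it frames the proof as a BGG construction yet defers to an argument that is actually a different method.

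The paper's actual proof follows the Casimir route directly and avoids resolutions entirely. It introduces the Casimir operator $C$ and the rescaled map $c(m)=v^{f(\mu)}Cm$ on the Verma module $M(\lambda)$, establishes a near-commutation relation between $c$ and the lowering generators $b_{i,l}$, and deduces that a primitive vector of weight $\mu=\lambda-\alpha$ (with $(\mu+\rho,i)\geqslant 0$ for all $i$) forces $\alpha=\sum_k l_k i_k$ with the $i_k$ pairwise orthogonal imaginary vertices perpendicular to $\lambda$ — this is the key inequality \eqref{ineq}. This identifies the support of $e^\rho\operatorname{Ch}(V(\lambda))/\operatorname{Ch}(U_Q^-)$ as $\rho+\lambda+\sigma_\lambda$ modulo $W$. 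Then $W$-skew-invariance (as in Kac's book and Sevenhant--Van den Bergh) pins the expression down to $\sum_w\epsilon(w)w(S_\lambda)$, and a direct computation with $[a_{i,l},b_{i,l}]$ acting on $v_\lambda$ shows the coefficients $c_\mu$ in $S_\lambda$ are exactly the ones coming from $e^{\rho+\lambda}/\operatorname{Ch}(U_Q^-)$, using that the generators $F_{i_k,l_k}$ commute pairwise for $s\in\sigma_\lambda$. No exactness of any complex is needed, and the Euler-function coefficients $\phi_l$ in $\epsilon(s)$ come out automatically from the denominator $\operatorname{Ch}(U_Q^-)$ rather than from a hypothetical Koszul complex. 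If you replace the resolution step of your proposal by this Casimir argument you recover the paper's proof; as written, the resolution step is an unfilled gap.
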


\smallskip

\begin{proof} 
The computation is close to the one made in the proof of~\cite[prop. 14]{BozecP}.
Recall that there exists a \emph{Casimir operator} $C$ acting on generalized Verma modules, defined in~\cite{BozecP}, and satisfying the following
relations~:
 \begin{align*}
K_i\,C&=C\, K_i,\\
K_{-li}\,a_{i,l}\,C&=K_{li}\,C\, a_{i,l},\\
b_{i,l}\,K_{li}\,C\, K_{li}&=C\, b_{i,l},
\end{align*}
for any $i\in I$ and $l\geqslant 1$.

\smallskip

Let $c$ be the $\QQ(v)$-linear map defined on the generalized Verma module $M(\lambda)$ of highest weight $\lambda$ by
\begin{align*}
c(m)=v^{f(\mu)}C m\text{ if }m\in V(\lambda)_\mu,\end{align*}
where $f(\mu)=(\mu,\mu+2\rho)$  and $\rho$ is defined by $(i,2\rho)=(i,i)$ for every $i\in I$. Notice that
\begin{align*}
f(\mu-li)-f(\mu)+2l(i,\mu) =l(l-1)(i,i)\end{align*}
for any $(i,l)\in I_\infty$. Since $C b_{i,l}=b_{i,l}C K_{2li}$, we get for any $m$
\begin{align*}
c(b_{i,l}m)&=v^{f(\mu-li)}C b_{i,l}m\\
&=v^{f(\mu-li)}b_{i,l}C K_{2li}m\\
&=v^{f(\mu-li)+2l(i,\mu)}b_{i,l}C m\\
&=v^{f(\mu-li)+2l(i,\mu)-f(\mu)}b_{i,l}c(m)\\
&=\left\{\begin{aligned}& v^{l(l-1)(i,i)}b_{i,l}c(m) &&\text{if }i\in I^\text{im}   \\ &b_{i,l}c(m) &&\text{if }i\in I^\text{re}  . \end{aligned}\right.
\end{align*}
Take $m\in M(\lambda)_\mu$ and assume $(\mu+\rho,i)\geqslant 0$ for every $i\in I$. Setting $\lambda-\mu=\alpha\in\mathbb{N} I$, we have
\begin{align*}
v^{f(\mu)}C m=v^{f(\lambda)+\sum_{1\leqslant k\leqslant r}l_k(l_k-1)(i_k,i_k)}C m\end{align*}
where $\sum_{i\in I^\text{im}}\alpha_ii=\sum_{1\leqslant k\leqslant r}l_ki_k$. If moreover $m$ is primitive, we get $c(m)=m$, hence
\begin{align*}
\displaystyle\sum_{1\le k\le r}l_k(l_k-1)(i_k,i_k)&=f(\mu)-f(\lambda)\\
&=(\mu-\lambda,\mu+\lambda+2\rho)\\
&=\underbrace{-\displaystyle\sum_{i\in I}\alpha_i(i,\lambda)}_{:=A\leqslant 0}-\displaystyle\sum_{i\in I}\alpha_i(i,\mu+2\rho)\\
&\leqslant -\displaystyle\sum_{i\in I^\text{re}}\alpha_i(i,\mu+2\rho)-\displaystyle\sum_{i\in I^\text{im}}\alpha_i(i,\mu+2\rho)\\
&\leqslant-\displaystyle\sum_{i\in I^\text{re}}{\alpha_i}-\displaystyle\sum_{i\in I^\text{im}}\alpha_i(i,\mu+2\rho)\\
&=-\displaystyle\sum_{i\in I^\text{re}}{\alpha_i}-\displaystyle\sum_{i\in I^\text{im}}\alpha_i(i,\lambda)+\displaystyle\sum_{i\in I^\text{im}}\alpha_i(i,\alpha-i)\\
&\leqslant-\displaystyle\sum_{i\in I^\text{re}}{\alpha_i}+\displaystyle\sum_{i\in I^\text{im}}\alpha_i(i,\alpha-i)\\
&=-\displaystyle\sum_{i\in I^\text{re}}\alpha_i+\sum_{i\in I^\text{im}}\alpha_i(\alpha_i-1)(i,i)+\displaystyle\sum_{\substack{i\in I^\text{im}\\j\neq i}}\alpha_i\alpha_j(i,j)\end{align*}
and thus
\begin{align*}
0\leqslant-\displaystyle\sum_{i\in I^\text{re}}\alpha_i+\displaystyle\sum_{\substack{i\in I^\text{im}\\j\neq i}}\alpha_i\alpha_j(i,j)
+\sum_{i\in I^\text{im}}(i,i)\bigg(\alpha_i(\alpha_i-1)-\displaystyle\sum_{i_k=i}l_k(l_k-1)\bigg).\end{align*}
Since $\sum_{i_k=i}l_k=\alpha_i$, we have
\begin{align*} 
\alpha_i(\alpha_i-1)-\sum_{i_k=i}l_k(l_k-1)\geqslant 0\end{align*}
with equality if and only if there is only one term in the sum.
Also, $(i,j)\leqslant 0$ when $i\neq j$, and $(i,i)\leqslant 0$ when $i$ is imaginary, hence
\begin{align}\label{ineq}
-\displaystyle\sum_{i\in I^\text{re}}\alpha_i+\displaystyle\sum_{\substack{i\in I^\text{im}\\j\neq i}}\alpha_i\alpha_j(i,j)
+\sum_{i\in I^\text{im}}(i,i)\bigg(\alpha_i(\alpha_i-1)-\displaystyle\sum_{i_k=i}l_k(l_k-1)\bigg)\leqslant 0.\end{align}
Finally every term in the sum is equal to $0$, hence $\alpha=\sum_{1\leqslant k\leqslant r}l_ki_k$, where the $i_k$ are pairwise orthogonal imaginary vertices, 
each perpendicular to $\lambda$ since $A$ has to be equal to $0$. This proves that if
\begin{align*}
\operatorname{Ch}(M(\lambda))=\displaystyle\sum_{\mu\leqslant \lambda}c'_\mu\operatorname{Ch}(V(\mu)),\end{align*}
where $c'_\lambda=1$, we have $c'_\mu=0$ if $\mu-\lambda\notin\sigma_\lambda$.

\smallskip

Now, as in~\cite{kacbook}, \cite{SVDB}, we also have
\begin{align*}
\dfrac{e^\rho\operatorname{Ch}(V(\lambda))}{\operatorname{Ch}(U_Q^-)}=
\displaystyle\sum_{\mu-\lambda\in\sigma_\lambda}c_\mu e^{\mu+\rho}\end{align*}
where $c_\mu\in \ZZ$ and $c_\lambda=1$, and both side are skew invariant under $W$. Also,
\begin{align*}
\dfrac{e^\rho\operatorname{Ch}(V(\lambda))}{\operatorname{Ch}(U_Q^-)}=
\displaystyle\sum_{w\in W}\epsilon(w)w(S_\lambda)\end{align*}
where
\begin{align*}
S_\lambda=\displaystyle\sum_{\substack{\mu-\lambda\in\sigma_\lambda\\\forall i,~(\mu+\rho,i)\geqslant 0}}c_\mu e^{\mu+\rho}.\end{align*}
If $\mu=\lambda-\alpha$ is a weight of $V(\lambda)$, there necessarily exists $i\in\operatorname{supp}(\alpha)$ such that $(i,\lambda)\neq0$. Indeed, if $(i,\lambda)=0$ and $l>0$, we have
\begin{align*}
a_{i,l}b_{i,l}v_\lambda=b_{i,l}a_{i,l}v_\lambda+\tau_{i,l}(K_{-li}-K_{li})v_\lambda=0+\tau_{i,l}(v^{-l(i,\lambda)}-v^{l(i,\lambda)})v_{\lambda}=0\end{align*}
and for any $\iota\neq(i,l)$
\begin{align*}
a_\iota b_{i,l}v_\lambda=b_{i,l}a_\iota v_\lambda.\end{align*}
Hence, by simplicity of $V(\lambda)$, we would have $b_{i,l}v_\lambda=0$. 
It proves that any term $e^{\rho+\mu}$ appearing in $S_\lambda$ comes from 
$e^{\rho+\lambda}/\operatorname{Ch}(U_Q^-)$, and we can conclude since for any 
$s=-\sum_{1\leqslant k\leqslant r}l_ki_k$ in $\sigma_\lambda$ the generators $F_{i_k,l_k}$ commute pairwise.
\end{proof}

\medskip

\subsection{The generalized Kac-Moody algebra}\hfill\\ 

We now define a generalized Kac-Moody algebra.
The Hopf algebra $U_Q$ can be seen as the quantized enveloping algebra of 
the Lie algebra $\mathfrak g_Q$ defined by the following set of generators
\begin{align*}
\{(h_i,e_{i,l},f_{i,l})\,;\,(i,l)\in I_\infty\}\end{align*}
subject to the following set of relations
\begin{align*}
[h_i,h_j]&=0\\
[h_j,e_\iota]&=(j,\iota)~e_\iota\\
[h_j,f_\iota]&=-(j,\iota)~f_\iota\\
(ad~e_j)^{-(j,\iota)}e_\iota=(ad~f_j)^{-(j,\iota)}f_\iota&=0&& \text{if }j\in I^\text{re}\\
[e_\iota,e_{\iota'}]=[f_\iota,f_{\iota'}]&=0&&\text{if }(\iota,\iota')=0\\
[e_\iota,f_{\iota'}]&=\delta_{\iota,\iota'}lh_i&&\text{if }\iota=(i,l).
\end{align*}
For $x\in\mathfrak g_Q$, we set $|x|=\alpha$ if $[h_i,x]=(i,\alpha)~x$ for every vertex $i$.
A generalized version of the Poincar\'e-Birkhoff-Witt theorem leads to the following formula~:\begin{equation}\label{pbw}
\operatorname{Ch}(U_Q^-)=\displaystyle\prod_{\alpha\in P^+}\dfrac{1}{(1-e^{-\alpha})^{\dim\mathfrak g_Q[\alpha]}}
\end{equation}
where $\mathfrak g_Q[\alpha]=\{x\in\mathfrak g_Q\,;\, |x|=\alpha\}$. Now, as in \S\ref{sec:1.7}, from the Lang-Weil theorem, we get
\begin{align*}
\frac{|\Lambda^{1}_{\alpha}(\mathbb{F}_q)|}{|G_{\alpha}(\mathbb{F}_q)|} q^{\langle \alpha, \alpha
\rangle}=|\text{Irr}(\Lambda^{1}_{\alpha})|
+O(q^{-1/2}).\end{align*}
But we know from~\cite{Bozec} that
\begin{align*}
|\text{Irr}(\Lambda^{1}_{\alpha})|=\dim(U_Q[\alpha])\end{align*}
hence
\begin{equation}\label{LWTH}
\lambda^{1}_{Q}(q,z)=\sum_{\alpha\in P^+} \dim(U_Q[\alpha])\, z^{\alpha} +
O(q^{-1/2}).
\end{equation}
On the other hand, by Theorem~\ref{T:main} we have
\begin{align}\label{ExpA}
\begin{split}
\lambda^{1}_{Q}(q,z)
&=\Exp\Big( \sum_{\alpha} A^{1}_{\alpha}(0)\,z^{\alpha} \Big) +
O(q^{-1/2})\\
&=\prod_{\alpha} 
(1-z^{\alpha})^{-A^\text{1}_{\alpha}(0)}+ O(q^{-1/2})
\end{split}
\end{align}
hence combining~\eqref{pbw},~\eqref{LWTH} and~\eqref{ExpA} we get the following result.

\smallskip

\begin{theorem}\label{T:Kac2} For any quiver $Q$ and any dimension vector $\alpha$ we have
\begin{equation*}
\dim(\mathfrak g_Q[\alpha])=A^{1}_{\alpha}(0).\end{equation*}
\qed
\end{theorem}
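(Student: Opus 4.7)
The strategy is the one indicated in the lead-up: compare two different asymptotic expansions of the generating series $\lambda_Q^1(q,z)$ and match them termwise. All the ingredients already appear in the excerpt; the proof is essentially a matter of assembly.

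First I would recall the three identities already established in the appendix. From the Lang--Weil estimate applied to the (Lagrangian, hence equidimensional) variety $\Lambda^1_\alpha$ together with the Kashiwara--Saito-type theorem of \cite{Bozec} identifying $|\operatorname{Irr}(\Lambda^1_\alpha)|$ with $\dim(U_Q[\alpha])$, one obtains
\[
\lambda^{1}_{Q}(q,z) \;=\; \sum_{\alpha\in P^+} \dim(U_Q[\alpha])\, z^{\alpha} +O(q^{-1/2}),
\]
which is \eqref{LWTH}. On the other hand, Theorem~\ref{T:main} gives $\lambda^1_Q(q,z)=P^1_Q(q,z)$, and plugging $q^{-1}\to 0$ into the definition of $P^1_Q$ (noting that $1/(1-q^{-1})\to 1$ and $A^1_\alpha(q^{-1})\to A^1_\alpha(0)$) yields \eqref{ExpA}:
\[
\lambda^{1}_{Q}(q,z)=\prod_{\alpha} (1-z^{\alpha})^{-A^1_{\alpha}(0)}+ O(q^{-1/2}).
\]

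Next I would invoke the PBW-type character formula \eqref{pbw} for $U_Q^-$ established in the preceding subsection,
\[
\operatorname{Ch}(U_Q^-)=\prod_{\alpha\in P^+}(1-e^{-\alpha})^{-\dim\mathfrak g_Q[\alpha]}.
\]
Under the substitution $e^{-\alpha}\mapsto z^{\alpha}$ (i.e.\ reading off weight-space dimensions), and using that $\dim U_Q^-[-\alpha]=\dim U_Q[\alpha]$, the left-hand side of \eqref{LWTH} becomes exactly this product. Combining with \eqref{ExpA} gives the equality of formal power series
\[
\prod_{\alpha\in P^+}(1-z^{\alpha})^{-\dim\mathfrak g_Q[\alpha]}
\;=\;\prod_{\alpha}(1-z^{\alpha})^{-A^{1}_{\alpha}(0)}.
\]

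Finally I would extract the exponents by taking logarithms and applying the plethystic/Möbius inversion (or equivalently, comparing Fourier modes of $\log$ on both sides, which uniquely determine the multiplicities in an infinite product of the form $\prod(1-z^\alpha)^{-m_\alpha}$). This gives $A^1_\alpha(0)=\dim\mathfrak g_Q[\alpha]$ for every $\alpha\in P^+$; for $\alpha\notin P^+$ both sides vanish by construction (and by Kac's theorem for $A^1_\alpha$, which vanishes off the root system, since $A_\alpha$ does). The only nontrivial ingredient that the argument relies on is the PBW-style character formula \eqref{pbw} for the generalized enveloping algebra $U(\mathfrak g_Q)$ in the presence of imaginary (and in particular isotropic) simple roots; the rest of the proof is a clean assembly of Theorem~\ref{T:main}, Lang--Weil, and the Kashiwara--Saito result of \cite{Bozec}. \qed
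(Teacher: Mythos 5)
Your proposal is correct and follows the same route as the paper: equations (6.2)--(6.4) (\eqref{pbw}, \eqref{LWTH}, \eqref{ExpA}) are combined exactly as you describe, with the paper leaving the final extraction of exponents implicit where you spell out the $\log$/M\"obius-inversion step. The only gloss worth adding is that comparing \eqref{LWTH} with \eqref{ExpA} gives an equality of two $q$-independent power series because both are the $q\to\infty$ limit of the same object $\lambda^1_Q(q,z)$; you assert this, and it is precisely what the paper intends.
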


\smallskip

\begin{example} Let us consider again the case of the quiver with one vertex and $g$ loops. 

\smallskip

If $g=1$, we know that $A^{1}_\alpha(t)=A^{1}_{\alpha}(0)=1$ 
for all $\alpha\geqslant 1$. In this case, the Lie algebra $\mathfrak g_Q$ defined in this section
is the Heisenberg algebra generated by $e_l$, $f_l=e_{-l}$ for $l\geqslant 1$ and $h$ subject to the following relations~:\begin{align*}
[h,e_l]&=0\\
[e_l,e_k]&=l\delta_{l,-k}h.
\end{align*}
We see that $\mathcal H[\alpha]$ is just spanned by $e_\alpha$, hence of dimension $1$ as expected.

\smallskip

If $g>1$, the generators of $\mathfrak g_Q$ of positive (or negative) 
degree no longer commute, and $U_Q^+$ is the free 
noncommutative algebra spanned by one element at every positive degree. Hence
\begin{align*}
\operatorname{Ch}(U_Q^+)=1+\displaystyle\sum_{\alpha\geqslant 1}2^{\alpha-1}z^\alpha=\dfrac{1-z}{1-2z}=
(1-z)\displaystyle\prod_{\alpha\geqslant 1}(1-z^\alpha)^{-m(2,\alpha)}=(1-z)^{-1}\displaystyle\prod_{\alpha\geqslant 2}(1-z^\alpha)^{-m(2,\alpha)}
\end{align*}
if $m(k,\alpha)$ denotes the number of $k$-ary Lyndon words of length $n$, which is given by the Necklace polynomial
\begin{align*}
m(k,\alpha)=\dfrac{1}{\alpha}\displaystyle\sum_{d|\alpha}\mu\big(\dfrac{\alpha}{d}\big)k^d,\end{align*}
where $\mu$ is the M{\"o}bius function. This is consistent with the computation of $A^{1}_k$ for $k=1,2,3$ given at the end of \S \ref{sec:2.3} and with our definition of the free Lie algebra $\mathfrak g_Q$.
\end{example}

\smallskip

\begin{remark}\label{R:last} One may wonder about a similar Lie-theoretic interpretation of the constant terms $a_{\v,0}^0$ of the nilpotent Kac polynomials $A^0_{\v}(t)$ associated to 
a quiver $Q$. For an \textit{arbitrary} tuple $\underline{\v}$ of elements of $\N^I$ summing to $\v$, let
$\pi_{\underline{\v}}: \widetilde{\mathcal{F}l}_{\underline{\v}} \to E_{\v}$ be the proper map considered by Lusztig, see \cite[Section~1.5]{LusJAMS}. 
Let $\mathcal{Q}_{\v}$ be the category of all semi-simple complexes whose simple factors occur
in $\pi_{\underline{\v},!}(\qlb)$ for some $\underline{\v}$ and let $\mathcal{K}_{\v}$ be the graded Grothendieck group of $\mathcal{Q}_{\v}$. As in \cite{LusJAMS}, the space $\mathcal{K}=\bigoplus_{\v} \mathcal{K}_{\v}$ is equipped with a (twisted) Hopf algebra structure. It is natural to expect that $\mathcal{K}$ is a $q$-deformation of the positive half of the envelopping algebra of a certain Lie algebra $\mathfrak{g}'_Q$, and that $a^0_{\v,0}=\dim(\mathfrak{g}'_Q[\v])$. 

\smallskip

Note that the difference with Lusztig's original construction is that we
allow here \textit{arbitrary} tuples $\underline{\v}$, whereas Lusztig only considered \textit{restricted} tuples $\underline{\v}$, i.e., tuples
$\underline{\v}=(\v^{(k)})$ for which each $\v^{(k)}$ is concentrated at a single vertex $i \in I$.  By construction, we would have $\mathfrak{g}'_Q \supseteq \mathfrak{g}_Q$, 
but the two Lie algebras would differ if $Q$ contains some oriented cycle. For instance, if $Q$ is a cyclic quiver with $n$ vertices then one can show that 
$\mathfrak{g}_Q=\widehat{\mathfrak{sl}}_n$ while $\mathfrak{g}'_Q=\widehat{\mathfrak{gl}}_n$, see \cite{SIMRN}.
\end{remark}

\medskip

\centerline{\textbf{Acknowledgements}}

\vspace{.15in}

We would like to thank A. Chambert-Loir, T. Hausel, S.-J. Kang, E. Letellier, H. Nakajima and Y.~Soibelman for useful
discussions and correspondence.
As explained to one of us by Y. Soibelman, a formula similar to (\ref{E:theo}) in the case of quivers without 1-loops has been obtained independently in some joint work of his and Kontsevich.

\bigskip


\vspace{4mm}

\noindent

T. Bozec, \texttt{tbozec@mit.edu},\\
Department of Mathematics,
Massachussets Institute of Technology,
77 Massachusetts Avenue
Cambridge, MA 02139-4307, USA

\smallskip

O. Schiffmann, \texttt{olivier.schiffmann@math.u-psud.fr},\\
D\'epartement de Math\'ematiques, Universit\'e de Paris-Sud, B\^atiment 425
91405 Orsay Cedex, FRANCE.

\smallskip

E. Vasserot, \texttt{eric.vasserot@imj-prg.fr},\\
D\'epartement de Math\'ematiques, Universit\'e de Paris 7, B\^at Sophie Germain, 5 rue Thomas Mann. 75205 Paris CEDEX 13, FRANCE


\begin{thebibliography}{20000}


\bibitem{BozecP}
T. Bozec,
\newblock Quivers with loops and perverse sheaves.
\newblock {\em Math. Ann.}, \textbf{362}, (2015), 773-797.

\bibitem{Bozec}
T. Bozec, \emph{Quivers with loops and generalized crystals}, Compositio Mathematica, \textbf{152}(10), pp. 1999--2040 (2016).

\bibitem{Bridgeland}
T. Bridgeland, \emph{An introduction to motivic Hall algebras}, Adv. Math. \textbf{229}, no. 1, 102--138 (2012).

\bibitem{CBlectures}
W. Crawley-Boevey, \emph{Lectures on representations of quivers}, available at http://www1.maths.leeds.ac.uk/~pmtwc/

\bibitem{CB01} W. Crawley-Boevey,
\emph{Geometry of the moment map for representations of quivers,}
Compositio Math. 126 (2001), 257-293. 

\bibitem{CBVdB}
W. Crawley-Boevey, M. Van den Bergh, \emph{Absolutely indecomposable representations and Kac-Moody Lie algebras. 
With an appendix by Hiraku Nakajima}, Invent. Math. {155} (2004), 537-559. 

\bibitem{Donkin}
S. Donkin, \emph{Polynomial invariants of representations of quivers}, Comment. Math. Helv. {69} (1994), 137-141.

\bibitem{Feit}
W. Feit, N. J. Fine, \emph{Pairs of commuting matrices over a finite field},
Duke Math. J. {27} (1960) 91-94. 

\bibitem{GHS}
O. Garcia-Prada, J. Heinloth, A. Schmitt,
\emph{On the motives of moduli of chains and Higgs bundles}, J. Eur. Math. Soc. \textbf{16} (2014), 2617-2668.

\bibitem{Ginzburg}
V. Ginzburg, \emph{Lectures on Nakajima's quiver varieties}, S\'eminaires et Congr\`es {24-I} (2012), 143-197.

\bibitem{Hausel}
T. Hausel, \emph{Kac's conjecture from Nakajima quiver varieties}, Invent. Math. {181} (2010),  21-37. 

\bibitem{HRV}
T. Hausel, F. Rodriguez-Villegas, \emph{Mixed Hodge polynomials of character varieties. With an appendix by Nicholas M. Katz}, Invent. Math. {174} (2008),  555-624.

\bibitem{HLV}
T. Hausel, E. Lettellier, F. Villegas-Rodriguez, \emph{Positivity of Kac polynomials and DT-invariants for quivers},  Ann. of Math. {177} (2013), 1147-1168.

\bibitem{Hesselink}
W. Hesselink, \emph{Concentration under actions of algebraic groups }, 
S\'eminaire d'Alg\`ebre Paul Dubreil et Marie-Paule Malliavin,
Lecture Notes in Mathematics No. {867}, 1981, pp 55--89. 


\bibitem{Hua}
J. Hua, \emph{Counting representations of quivers over finite fields}, J. Algebra {226}, 1011-1033 (2000)

\bibitem{Kac}
V. Kac, \emph{Root systems, representations of quivers and invariant theory}, In: Invariant Theory, Montecatini, 1982. Lecture Notes in Mathematics, vol. {996}, 74-108, 
Springer, Berlin (1983). 


\bibitem{kacbook}
V. Kac,
\newblock {\em Infinite-dimensional {L}ie algebras}.
\newblock Cambridge University Press, Cambridge, third edition, 1990.


\bibitem{KS}
M. Kashiwara, T. Saito, \emph{Geometric construction of crystal bases}, Duke Math. J. {89} (1997), 9-36.

\bibitem{LP}
L. LeBruyn, C. Procesi, \emph{Semisimple representations of quivers}, Trans. Amer. Math. Soc. {317} (1990), 585-598.

\bibitem{LusJAMS}
G. Lusztig, \emph{Quivers, perverse sheaves, and quantized enveloping algebras}, J. Amer. Math. Soc. {4} (1991), 365-421. 

\bibitem{Lusconj}
G. Lusztig, \emph{Canonical bases arising from quantized enveloping algebras. II}, Common trends in mathematics and quantum field theories (Kyoto, 1990). Progr. Theoret. Phys. Suppl. No. {102} (1990), 175-201 (1991).

\bibitem{Lusztigonquiver}
G. Lusztig, \emph{On quiver varieties}, Adv. in Math., {136}, 141-182 (1998).

\bibitem{MO}
D. Maulik, A. Okounkov, \emph{Quantum groups and quantum cohomology}, preprint arXiv:1211.1287 (2012).

\bibitem{Mozgovoy}
S. Mozgovoy, \emph{Fermionic forms and quiver varieties}, arXiv:math/0610084 (2006).

\bibitem{Mozgovoy2}
S. Mozgovoy, \emph{ Motivic Donaldson-Thomas invariants and McKay correspondence},  arXiv:1107.6044 (2011).

\bibitem{SMoz}
S. Mozgovoy, O. Schiffmann, \emph{Counting Higgs bundles}, arXiv:1411.2101 (2014).

\bibitem{Nak}
H. Nakajima, \emph{Instantons on ALE spaces, quiver varieties and Kac-Moody algebras}, Duke Math. J. {76} (1994), 365-416. 

\bibitem{Nakajima}
H. Nakajima, \emph{Quiver varieties and Kac-Moody algebras}, Duke Math. J. {91}, 515-560 (1998). 

\bibitem{NakJAMS}
H. Nakajima, \emph{Quiver varieties and finite dimensional representations of quantum affine algebras},  
J. Amer. Math. Soc. {14} (2001), no. 1, 145--238.

\bibitem{NakAnn}
H. Nakajima, \emph{Quiver varieties and $t$-analogs of $q$-characters of quantum affine algebras}, Ann. of Math. (2) {160} (2004),  1057-1097.

 
\bibitem{R-V}
F. Villegas-Rodriguez, \emph{Counting colorings on varieties}, Publ. Mat. 2007, Proceedings of the Primeras Jornadas de Teor\`ia de N\'umeros, 209-220.

\bibitem{SIMRN}
O. Schiffmann, \emph{The Hall algebra of a cyclic quiver and canonical bases of Fock spaces}, IMRN, {8} (2000), 413--440.

\bibitem{SLectures2}
O. Schiffmann, \emph{Lectures on canonical and crystal bases of Hall algebras},  S\'eminaires et Congr\`es {24-II} (2012), 143-258.

\bibitem{SHiggs}
O. Schiffmann, \emph{Indecomposable vector bundles over smooth projective curves and stable Higgs bundles}, Ann. of Math. (2) {183} (2016), 297--362.

\bibitem{SV} 
O. Schiffmann, E. Vasserot, \emph{On cohomological Hall algebras of quivers}, in preparation. 

\bibitem{SVDB}
B. Sevenhant and M. Van~Den~Bergh,
\newblock A relation between a conjecture of {K}ac and the structure of the
  {H}all algebra.
\newblock {\em J. Pure Appl. Algebra}, 160, 319-332, 2001.

\bibitem{Wyss}
D. Wyss, \emph{Motivic classes of Nakajima quiver varieties}, Int. Math. Res. Notices (2016) doi: 10.1093/imrn/rnw217 


\end{thebibliography}
\end{document}